\documentclass[11pt,twoside]{article}
\usepackage{etex}
\usepackage[dvipsnames]{xcolor} 

\usepackage{color}
\usepackage{booktabs}
\usepackage{amsthm}

\usepackage{amsfonts,amssymb,amsmath,amsxtra,url,float} 
\allowdisplaybreaks[4] 
\usepackage[colorlinks,
  linkcolor=magenta, %
  anchorcolor=Periwinkle,
  citecolor=violet,
  urlcolor=blue
  ]{hyperref} 
\usepackage{enumitem}
\usepackage{geometry} 
\usepackage{rotating} 
\usepackage{lscape} 
\usepackage{yhmath} 
\usepackage{multirow}
\usepackage{graphicx} 
\usepackage{subfigure} 
\usepackage{tikz}
\usepackage{pgfplots}
\usepackage{tikz-3dplot}
\usetikzlibrary{patterns}
\usetikzlibrary{3d,calc}
\usetikzlibrary{decorations.pathreplacing,decorations.markings}
 \tikzset{
  on each segment/.style={
    decorate,
    decoration={
      show path construction,
      moveto code={},
      lineto code={
        \path [#1]
        (\tikzinputsegmentfirst) -- (\tikzinputsegmentlast);
      },
      curveto code={
        \path [#1] (\tikzinputsegmentfirst)
        .. controls
        (\tikzinputsegmentsupporta) and (\tikzinputsegmentsupportb)
        ..
        (\tikzinputsegmentlast);
      },
      closepath code={
        \path [#1]
        (\tikzinputsegmentfirst) -- (\tikzinputsegmentlast);
      },
    },
  },
  mid arrow/.style={postaction={decorate,decoration={
        markings,
        mark=at position 0.6 with {\arrow[#1]{stealth}} 
      }}},
}
\usetikzlibrary{arrows}
\usetikzlibrary{trees}

\usetikzlibrary{matrix}
\usetikzlibrary{patterns}
\usetikzlibrary{shadings} 
\usepackage{fancyhdr} 
\def\headertitle{Semi-orthogonal and derived decompositions for gentle algebras}
\def\fstpage{1} 
\def\page{$\begin{matrix} {\color{white}0} \\ \thepage \end{matrix}$} 

\usepackage[all]{xy} 
\usepackage{dsfont} 
\usepackage{cite}
\usepackage{mathrsfs} 
\numberwithin{figure}{section}
\usepackage{marginnote} 
\usepackage{graphicx} 
\usepackage{multicol} 

\usepackage{enumitem}
\setenumerate[1]{itemsep=0pt,partopsep=0pt,parsep=\parskip,topsep=3pt}
\setitemize[1]{itemsep=0pt,partopsep=0pt,parsep=\parskip,topsep=3pt}
\setdescription{itemsep=0pt,partopsep=0pt,parsep=\parskip,topsep=3pt}
\setlist[itemize]{leftmargin=35pt}
\setlist[enumerate]{leftmargin=35pt}
\usepackage{changepage} 
\newcommand{\checks}[1]{{\color{black}{#1}}} 

\newtheorem{theorem}{Theorem}[section]
\newtheorem{lemma}[theorem]{Lemma}
\newtheorem{corollary}[theorem]{Corollary}
\newtheorem{main theorem}[theorem]{Main Theorem}
\newtheorem{proposition}[theorem]{Proposition}
\newtheorem{definition}[theorem]{Definition}
\newtheorem{construction}[theorem]{Construction}
\newtheorem{remark}[theorem]{Remark}
\newtheorem{example}[theorem]{Example}

\newtheorem{question}[theorem]{Question}

\usetikzlibrary{arrows}

\numberwithin{equation}{section}

\def\orcid{
\begin{tikzpicture}[baseline=-1mm]
\filldraw[Green!35] (0,0) circle (5pt);
\filldraw[white] (0,0) node{\tiny\textbf{iD}};
\end{tikzpicture}
}
\def\orcid{
\begin{tikzpicture}[baseline=-1mm]
\filldraw[Green!35] (0,0) circle (5pt);
\filldraw[white] (0,0) node{\tiny\textbf{iD}};
\end{tikzpicture}
}
\newcommand{\ORCID}[1]{ORCID: \href{https://orcid.org/#1}{#1}}
\newcommand{\ORCIDNOTATION}[1]{\href{https://orcid.org/#1}{\orcid}}
\def\EnglishTitle{Semi-orthogonal and derived decompositions for gentle algebras}
\def\EnglishFundings{
Jiangsheng Hu is supported by
the National Natural Science Foundation of China (Grant No. 12571035);
Yu-Zhe Liu is supported by
the National Natural Science Foundation of China (Grant Nos. 12401042 and 12561008),
the Science and Technology Foundation of the Guizhou S\&T Department (Grant Nos. VZD[2026]001, ZD[2025]085 and ZK[2024]YiBan066),
and Scientific Research Foundation of Guizhou University (Grant No. [2023]16);
Tiwei Zhao is supported by
the National Natural Science Foundation of China (Grant No. 12471036) and
Hubei Provincial Natural Science Foundation of China (Grant No. 2026AFA094)
}
\def\FirstAuthorORICD{
0000-0001-7510-9754}
\def\SecondAuthorORICD{
0009-0005-1110-386X}
\def\ThirdAuthorORICD{
0000-0001-8121-4243}

\def\PaperAuthorsENname{
Jiangsheng Hu
$^{\ref{Author1}, \ORCIDNOTATION{\FirstAuthorORICD}\ref{orcid1}}$,
Yu-Zhe Liu
$^{\ref{Author2}, \ORCIDNOTATION{\SecondAuthorORICD}\ref{orcid2},~\ref{CorrespondingAuthor}}$,
Tiwei Zhao
$^{\ref{Author3}, \ORCIDNOTATION{\ThirdAuthorORICD}\ref{orcid3}}$
}
\def\FirstEnOrgani{School of Mathematics, Hangzhou Normal University, Hangzhou 311121, Zhejiang, P. R. China}
\def\SecondEnOrgani{School of Mathematics and Statistics, Guizhou University, Guiyang 550025, Guizhou, P. R. China}
\def\ThirdEnOrgani{School of Artificial Intelligence, Jianghan University, Wuhan 430056, Hubei, P. R. China}

\def\FirstEmail{\url{hujs@hznu.edu.cn} (J. Hu)}
\def\SecondEmail{\url{yzliu3@163.com}/\url{liuyz@gzu.edu.cn} (Y.-Z. Liu)}
\def\ThirdEmail{\url{tiweizhao@jhun.edu.cn} (T. Zhao)}

\def\NN{\mathbb{N}} 
\def\ZZ{\mathbb{Z}} 

\newcommand{\Ima}{\operatorname{Im}}
\newcommand{\Pic}{Figure\ }
\newcommand{\modcat}{\mathsf{mod}}

\newcommand{\add}{\mathsf{add}}
\newcommand{\proj}{\mathsf{proj}}

\newcommand{\Dcat}{\mathsf{D}}
\newcommand{\ind}{\mathsf{ind}}

\def\kk{\Bbbk} 
\def\Q{\mathcal{Q}} 
\def\I{\mathcal{I}}

\def\compos{\ \lower-0.2ex\hbox{\tikz\draw (0pt, 0pt) circle (.1em);} \ }

\newcommand{\per}{\mathsf{per}} 
\newcommand{\Hom}{\mathrm{Hom}} %
\newcommand{\End}{\mathrm{End}} %
\newcommand{\Ext}{\mathrm{Ext}} %

\renewcommand{\H}{\mathrm{H}} %

\newcommand{\To}[1]{\mathop{-\!\!\!-\!\!\!\longrightarrow}\limits^{#1}}

\newcommand{\defines}[1]{{\it\color{violet}#1}}

\title{\bf \EnglishTitle$^{\color{red}\dag}$
\footnotetext[2]{ \tiny \EnglishFundings}
}

\vspace{5mm}

\author{\PaperAuthorsENname}
\date{ }

\begin{document}



\thispagestyle{empty}

\maketitle

\begin{enumerate}[label=\textbf{\color{red}$\ddag$}]
  \item \footnotesize
    \begin{center}
      Corresponding author
    \end{center} \label{CorrespondingAuthor}
\end{enumerate}


\begin{enumerate}[leftmargin=6.6cm] \footnotesize
  \item[\orcid]
      \ORCID{\FirstAuthorORICD}
      \label{orcid1} 
  \item[\orcid]
      \ORCID{\SecondAuthorORICD}
      \label{orcid2} 
  \item[\orcid]
      \ORCID{\ThirdAuthorORICD}
      \label{orcid3} 
\end{enumerate}

\vspace{2mm}
\begin{enumerate}[label=\textbf{\color{red}\arabic*}] \footnotesize
  \item
    \begin{center}
      \FirstEnOrgani

      E-mail: \FirstEmail
    \end{center}
    \label{Author1}

  \item
    \begin{center}
      \SecondEnOrgani

      E-mail: \SecondEmail
    \end{center}
    \label{Author2}

  \item
    \begin{center}
      \ThirdEnOrgani

       E-mail: \ThirdEmail
    \end{center}
    \label{Author3}
%
%
%
\end{enumerate}




\vspace{1mm}


\begin{adjustwidth}{1cm}{1cm}
  \noindent \footnotesize
  \textbf{Abstract}:
We study semi-orthogonal decompositions of perfect derived categories of gentle algebras via marked ribbon surfaces. We characterize such decompositions in terms of suitable disjoint union decompositions of full formal arc systems, and relate this description to good cuts of the corresponding surfaces. For gentle algebras, rotations of curves induce fully faithful functors from extension-closed subcategories of module categories to the components of the associated semi-orthogonal decompositions. Under additional Abelian and extension-comparison conditions, these constructions give derived decompositions of the module categories.
\vspace{1mm}

  \noindent
    \textbf{2020 Mathematics Subject Classification}:
16G10. 
     \label{2020MSC}

\vspace{1mm}

  \noindent
    \textbf{Keywords}:
    gentle algebras; semi-orthogonal decompositions; derived decompositions.
     \label{Keywords}
\end{adjustwidth}

\newpage
\tableofcontents


\newcommand{\spacing}[1]{%
  \renewcommand{\baselinestretch}{#1}%
  \normalsize%
}

\section{Introduction}

\def\la{\langle} 
\def\ra{\rangle} 
\def\lala{\langle\!\langle}
\def\rara{\rangle\!\rangle}
\def\=<{\leqslant}
\def\>={\geqslant}
\def\s{\mathfrak{s}}
\def\t{\mathfrak{t}}
\def\pdim{\mathrm{proj.dim}}
\def\idim{\mathrm{inj.dim}}
\def\gldim{\mathrm{gl.dim}}
\def\fdim{\mathrm{fin.dim}}
\def\Left{\mathrm{L}}
\def\Right{\mathrm{R}}

\def\calA{\mathcal{A}}
\def\calB{\mathcal{B}}
\def\calC{\mathcal{C}}
\def\calD{\mathcal{D}}
\def\calE{\mathcal{E}}
\def\calF{\mathcal{F}}
\def\calG{\mathcal{G}}
\def\calR{\mathcal{R}}
\def\calS{\mathcal{S}}
\def\calT{\mathcal{T}}
\def\calU{\mathcal{U}}
\def\calV{\mathcal{V}}
\def\calW{\mathcal{W}}
\def\calX{\mathcal{X}}
\def\calY{\mathcal{Y}}
\def\calZ{\mathcal{Z}}
\newcommand{\shift}[2]{{#2}[#1]}

\newcommand{\SURF}{\mathbf{S}} 
\newcommand{\Surf}{\mathcal{S}} 
\newcommand{\bSurf}{\partial\mathcal{S}} 
\newcommand{\M}{\mathcal{M}} 
\newcommand{\MM}{\mathfrak{M}} 
\newcommand{\X}{\mathfrak{X}}
  \newcommand{\gbullet}{{\color{blue}\bullet}} 
\newcommand{\rbullet}{{\color{red}\circ}} 
\newcommand{\E}{\mathcal{E}} 
\newcommand{\D}{\Delta} 
  \newcommand{\Dblue}{\Delta_{\color{blue}\bullet}} 
\newcommand{\Dred}{\Delta_{\color{red}\circ}} 
\newcommand{\dualDgreen}{\Delta_{{\color{ForestGreen}\bullet}}^{\star}} 
\newcommand{\tDgreen}{\widetilde{\Delta}_{{\color{ForestGreen}\bullet}}} 
\newcommand{\dualDred}{\Delta_{{\color{red}\circ}}^{\star}} 
\newcommand{\tDred}{\widetilde{\Delta}_{{\color{red}\circ}}} 
\newcommand{\OEP}{\mathrm{OEP}} 
\newcommand{\CEP}{\mathrm{CEP}} 
\newcommand{\PP}{\mathcal{P}} 
\newcommand{\PGD}{\mathrm{PGD}} 
\newcommand{\GD}{\mathrm{GD}} 
\newcommand{\SURFblue}{\mathbf{S}_{\color{blue}\bullet}} 
\newcommand{\SURFextra}{\mathbf{S}^{\mathcal{E}}_{\color{blue}\bullet}} 
\newcommand{\innerSurf}{\mathcal{S}\backslash\partial\mathcal{S}} 
\newcommand{\innerSurfA}{\mathcal{S}_A\backslash\partial\mathcal{S}_A} 
\newcommand{\SURFred}[1]{\mathbf{S}^{#1}_{\color{red}\circ}} 
\newcommand{\htp}{\mathrm{htp}} 
\newcommand{\tc}{\tilde{c}} 
\newcommand{\tvarsig}{\tilde{\varsigma}} 
\newcommand{\Y}{\mathcal{Y}} 
\newcommand{\F}{\mathcal{F}} 
\newcommand{\Int}{\mathrm{Int}} 
\newcommand{\ii}{\mathfrak{i}} 
\newcommand{\tilt}{\mathrm{tilt}} 
\newcommand{\silt}{\mathrm{silt}} 
\newcommand{\PC}{\mathrm{PC}} 
\newcommand{\CC}{\mathrm{CC}} 
\newcommand{\AC}{\mathrm{AC}} 
\newcommand{\Egreen}{\mathfrak{E}_{\gbullet}}
\newcommand{\Ered}{\mathfrak{E}_{\rbullet}}

\def\bsm{\begin{smallmatrix}}
\def\esm{\end{smallmatrix}}
\def\m{\mathfrak{m}}
\def\rota{{\color{blue}\pmb{\circlearrowright}}}
\def\antirota{{\color{red}\pmb{\circlearrowleft}}}
\def\emb{\mathbf{e}}

Gentle algebras \checks{(over algebraically closed field)} were introduced by Assem and Skowro\'nski in the study of algebras derived equivalent to hereditary algebras of type $\widetilde{\mathbb A}$ \cite{AS1987}.
They form a particularly tractable class of finite-dimensional algebras,
while at the same time occurring in a variety of contexts,
including tilting and silting theory \cite[etc]{FGLZ2023,CS2023b,LiuZhou2025},
module categories and representation types \cite[etc]{BCS2021,Pla2019},
classification theory of derived equivalences \cite[etc]{ALP2016,KY2018,OPS2018,Kal2015,LP2020,APS2023},
and homological properties \cite[etc]{LZZpre2023,LGH2024}.
One reason for their prominent role is that both their module categories
and their derived categories admit concrete combinatorial and geometric descriptions.

The surface approach to gentle algebras originates in the topological Fukaya categories of graded marked surfaces developed by Haiden, Katzarkov and Kontsevich \cite{HKK2017}.
Opper, Plamondon and Schroll subsequently constructed a geometric model for the perfect derived category of a gentle algebra \cite{OPS2018}.
In the method given in \cite{OPS2018}, indecomposable objects are represented by graded admissible curves on an associated marked ribbon surface,
while intersections of curves describe morphisms and their smoothings describe mapping cones.
Baur and Coelho Sim\~oes constructed a geometric model for the category of finitely generated modules in terms of permissible curves \cite{BCS2021}.
These models have proved effective in translating homological and representation-theoretic questions into the topology and
combinatorics of marked surfaces.

Semi-orthogonal decompositions are a fundamental tool for studying a triangulated category through smaller triangulated subcategories.
For a triangulated category $\calT$, such a decomposition expresses $\calT$ as
\[ \calT=\langle\calC,\calD\rangle,\]
where $\calC$ and $\calD$ are full triangulated subcategories satisfying $\Hom_{\calT}(\calC,\calD)=0$, and every object of $\calT$ is obtained from an object of $\calC$ and an object of $\calD$ by a distinguished triangle.
Kop\v{r}iva and \v{S}\v{t}ov\'i\v{c}ek studied such decompositions for gentle algebras through the geometric model of \cite{OPS2018}.
They established a one-to-one correspondence between semi-orthogonal decompositions of the perfect, or bounded, derived category of a gentle algebra and good cuts of its marked ribbon surface \cite{KS2022}.

There is also an Abelian counterpart of this problem.
Chen and Xi introduced derived decompositions of an Abelian category in terms of
full Abelian subcategories whose bounded derived categories embed fully faithfully
and form a semi-orthogonal decomposition of the ambient derived category \cite{CX2021}.
Thus, it is natural to ask the following question.
\begin{question}
Given a gentle algebra $A$, can a semi-orthogonal decomposition of the derived category of $A$ be realized by suitable subcategories of the module category of $A$?
\end{question}
The two surface models mentioned above describe the objects of $\modcat(A)$ and $\per(A)$ by different types of curves,
so answering this question requires a geometric transformation between permissible curves and admissible curves.

Such a transformation is provided by the rotations of permissible curves.
Here, each permissible curve $c$ is a special curve on the geometric model of a gentle algebra $A$,
it can be used to describe the indecomposable module $\MM(c)$.
The rotation of a permissible curve $c$ produces a graded admissible curve $\tc^{\rota}$
whose corresponding complex $\X(\tc^{\rota})$ corresponds to the projective resolution of $\MM(c)$.
Such a rotation gives a good description of the canonical embedding from the module category of a gentle algebra to its derived category,
and was initially employed to show the absence of strictly shod algebras on hereditary gentle algebras \cite{ZhangLiu2024}.
Chang also used this rotation method to study the heart of the derived category of gentle algebras \cite{Chang2025}.
The aim of the present paper is to combine this rotation procedure with the geometry of semi-orthogonal decompositions.
In this way, we first describe semi-orthogonal decompositions by decompositions of full formal arc systems, 
and then relate their components to extension-closed subcategories of the module category.
Throughout the paper we concentrate on permissible and admissible curves with endpoints, 
hence on string modules and string complexes, and band modules and band complexes are not considered.
\checks{Indeed, the rotation constructions considered in this paper concern permissible and admissible curves with endpoints, and hence string modules and string complexes. Nevertheless, the statements concerning semi-orthogonal decompositions apply to the whole category $\per(A)$, including band complexes, since the latter are automatically covered by triangulated generation from a full formal arc system.}

Let $A$ be a gentle algebra and let $\SURF$ be its marked ribbon surface.
We use $\modcat(A)$ to represent the finite dimensional module category of $A$,
and use $\Dcat^b(A)$ to represent the bounded derived category of $A$.
Our first main result gives a criterion for the existence of a semi-orthogonal decomposition directly in terms of a disjoint union of $\gbullet$-full formal arc system $\Delta$,
this disjoint union need satisfies two special conditions \ref{OD1} and \ref{OD2}
(see Section \ref{sec:semi-orth of gentle} for details).

\begin{theorem}[Theorem \ref{thm:main 260724}] \label{thm:main 1 260806}
Assume $A$ is a gentle algebra.
Then the derived category $\Dcat^b(A)$ admits a semi-orthogonal decomposition if and only if
$\SURF$ admits a $\gbullet$-full formal arc system $\Delta=D_1\sqcup D_2$
such that the two families satisfy \ref{OD1} and \ref{OD2}.
\end{theorem}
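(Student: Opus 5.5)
We would reduce the statement to the geometric model of \cite{OPS2018} combined with the good-cut classification of \cite{KS2022}, and read \ref{OD1} and \ref{OD2} as the arc-theoretic translation of the two axioms of a semi-orthogonal decomposition. Concretely, \ref{OD1} should encode the vanishing $\Hom(\X(D_2),\X(D_1)[n])=0$ for all $n$, i.e.\ there are no oriented intersections (no boundary angles at a common $\gbullet$-marked point in the relevant direction) from arcs of $D_2$ to arcs of $D_1$. \ref{OD2} should encode that each $D_i$ is itself a formal arc system of a subsurface, i.e.\ that cutting $\SURF$ separates $D_1$ from $D_2$. Since $A$ is gentle (finite global dimension is not assumed, but $\per(A)$ is triangulated-generated by the objects $\X(\gamma)$, $\gamma\in\Delta$, for any $\gbullet$-full formal arc system $\Delta$), the subcategories $\calC_i=\mathrm{thick}\{\X(\gamma):\gamma\in D_i\}$ are the natural candidates.

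For the ``if'' direction, we set $\calC=\calC_1$ and $\calD=\calC_2$ (ordered according to \ref{OD1}). First, \ref{OD1} together with the description of morphisms between string complexes via graded intersections at marked points gives $\Hom(\calD,\calC)=0$ on generators, hence on thick closures by dévissage. Second, since $\Delta$ is full, $\calC\cup\calD$ generates $\per(A)$. A standard lemma then applies: if two thick subcategories are semi-orthogonal and jointly generate, then the extension-closure $\calC*\calD$ is thick and contains the generators, so $\per(A)=\langle\calC,\calD\rangle$. This passes to $\Dcat^b(A)$ either directly, when $\gldim A<\infty$, or via the identification of admissible decompositions used in \cite{KS2022}.

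For the ``only if'' direction, we start from a decomposition $\langle\calC,\calD\rangle$ and invoke \cite{KS2022} to obtain a good cut of $\SURF$ into pieces $\SURF_1,\SURF_2$ with $\calC,\calD$ equivalent to the categories of the pieces. On each piece we choose a $\gbullet$-full formal arc system $D_i$; these exist because every marked ribbon surface of a graded gentle algebra admits one. We then show that $D_1\sqcup D_2$, viewed in $\SURF$, is again full and formal: the cut arcs cut out polygons, each containing exactly one $\rbullet$-point or boundary component. We finally check \ref{OD1} by noting that angles between $D_1$ and $D_2$ only occur at marked points on the cut, where the good-cut condition forces a single orientation. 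Condition \ref{OD2} then holds by construction.

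The main obstacle is the gluing step in the ``only if'' direction: showing that the union of arc systems of the two cut pieces is a formal arc system of $\SURF$, i.e.\ that the polygons are correct near the cut and contain no extra unmarked boundary. If the gluing fails, we would instead avoid cutting. We would then directly choose the arcs $\X^{-1}$ of the indecomposable generators of $\calC$ and $\calD$, reduce them by the Hom-vanishing to be pairwise non-crossing, and complete them to a full formal system using the counting of arcs (rank of $K_0$).
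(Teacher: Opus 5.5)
Your proposal is correct and follows the paper's route. The ``if'' direction is Proposition~\ref{prop:OD}: Hom-vanishing from the one-sided angles at shared $\gbullet$-endpoints, plus generation of $\per(A)$ by a $\gbullet$-FFAS; your standard lemma that $\calC*\calD$ is thick for a semi-orthogonal pair of thick subcategories is a cleaner substitute for the paper's case analysis. The ``only if'' direction is Proposition~\ref{prop:good cut}: take the good cut of Kop\v{r}iva--\v{S}\v{t}ov\'{i}\v{c}ek, choose a $\gbullet$-FFAS on each piece, and sort the arcs by left/right added points. The gluing step you worry about does hold, since the two polygons containing $r_\omega^{\Left}$ and $r_\omega^{\Right}$ merge along $\omega$ into one polygon whose unique boundary edge contains $r_\omega$, while all other polygons are unchanged.

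You have, however, misread the conditions. In the paper, \ref{OD1} says that $D_1\cup D_2$ is a $\gbullet$-FFAS, and \ref{OD2} says that arcs from different parts meet only at $\gbullet$-marked points and always with the same orientation (what you call \ref{OD1}); your ``subsurface'' condition does not appear. Since your argument uses and verifies exactly these two properties, this is only a relabeling and does not affect correctness.
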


Furthermore, we have the following result which shows that two subcategories obtained by semi-orthogonal decomposition of the perfect derived category of a gentle algebra are respectively perfect derived categories of two gentle algebras (not necessarily connected).

\begin{corollary}[{Corollary \ref{coro:SOD-components-gentle}}]
Let $A$ be a gentle algebra and suppose that $\per(A)$ has a non-trivial semi-orthogonal decomposition $\langle \calX,\calY\rangle$.
\begin{enumerate}[label={\rm(\arabic*)}]
  \item Then there exist gentle algebras $B_{\calX}$ and $B_{\calY}$, which are not necessarily connected,
and triangle equivalences $\calX\simeq \per(B_{\calX})$ and $\calY\simeq \per(B_{\calY})$.
  \item If, in addition, $A$ has a finite global dimension, then $B_{\calX}$ and $B_{\calY}$ have finite global dimension.
Consequently, $\calX\simeq \Dcat^b(B_{\calX})$ and $\calY\simeq \Dcat^b(B_{\calY})$.
\end{enumerate}
\end{corollary}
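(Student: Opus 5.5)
The plan is to pass from the semi-orthogonal decomposition to the surface, using Theorem \ref{thm:main 1 260806} and the Kopřiva--Šťovíček correspondence with good cuts \cite{KS2022}. Suppose $\per(A)=\langle\calX,\calY\rangle$ is non-trivial. By Theorem \ref{thm:main 1 260806}, or rather its perfect-category version proved in Section \ref{sec:semi-orth of gentle}, there is a $\gbullet$-full formal arc system $\Delta=D_1\sqcup D_2$ of $\SURF$ satisfying \ref{OD1} and \ref{OD2}. Moreover, $\calX$ is the thick subcategory generated by the string objects $\X(\gamma)$ for $\gamma\in D_1$, and $\calY$ is generated by those for $\gamma\in D_2$ (or the reverse, depending on the ordering convention of \ref{OD1}).

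Since $\Delta$ is a full formal arc system, $T=\bigoplus_{\gamma\in\Delta}\X(\gamma)$ is a tilting object whose endomorphism algebra is again gentle. Its arrows are the minimal oriented angles between consecutive arcs of $\Delta$ at common marked points, and its relations are the compositions of consecutive angles inside the same polygon. This follows from the formal-arc-system model of \cite{OPS2018}.

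For each $i=1,2$, set $T_i=\bigoplus_{\gamma\in D_i}\X(\gamma)$ and $B_i=\End(T_i)$. Then $B_i=e_iBe_i$ for the idempotent $e_i$ of $B=\End(T)$ corresponding to $D_i$. Condition \ref{OD1} (no morphisms from $D_2$-arcs to $D_1$-arcs) gives that $B$ is triangular with respect to $e_1,e_2$. One then checks directly on the quiver with relations that $e_iBe_i$ is gentle. In $e_iBe_i$, an arrow is either an angle of $B$ between two $D_i$ arcs, or a path passing through $D_{3-i}$ arcs that is nonzero. The key combinatorial point, where \ref{OD2} enters, is that such paths are still "minimal angles" between $D_i$ arcs once the $D_{3-i}$ arcs are erased. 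Erasing $D_{3-i}$ gives a formal arc system on the cut surface $\SURF_i$, namely the good cut of \cite{KS2022}. Hence $B_i$ is the gentle algebra of $(\SURF_i,D_i)$; it may be disconnected when the cut separates components. I expect this verification to be the main obstacle: the relations in $e_iBe_i$ must be quadratic monomial with the gentle conditions at each vertex. I would handle it by arguing geometrically that each polygon of $D_i$ is a union of polygons of $\Delta$ glued along $D_{3-i}$ arcs, using \ref{OD2} to guarantee that each such polygon has exactly one boundary segment, or one unmarked boundary component.

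Having this, $T_i$ is a classical generator of $\calX$, respectively $\calY$. Because $\per(A)$ is algebraic, Keller's theorem together with $\Hom(T_i,T_i[n])=0$ for $n\neq0$ gives $\calX\simeq\per(B_{\calX})$ and $\calY\simeq\per(B_{\calY})$. This proves (1).

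For (2), the key fact is that $B=\End(T)$ is derived equivalent to $A$, so $B$ has finite global dimension. For a triangular algebra $B$ with respect to $e_1,e_2$, both corners $e_1Be_1$ and $e_2Be_2$ inherit finite global dimension: finite global dimension of $B$ forces each simple $B_i$-module to be perfect, by restriction along the recollement given by the semi-orthogonal decomposition. Geometrically, this is equivalent to the statement that $\SURF$ has no unmarked boundary components, which are the punctures giving infinite global dimension, and cutting creates none. Finally, for an algebra of finite global dimension, $\per(B)=\Dcat^b(B)$, which gives the consequence.
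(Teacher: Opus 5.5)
\textbf{The gap is the tilting step.} Your claim that ``since $\Delta$ is a full formal arc system, $T=\bigoplus_{\gamma\in\Delta}\X(\gamma)$ is a tilting object'' is false in general. The formal arc system model only says that $T$ is a generator whose \emph{graded} endomorphism algebra $\bigoplus_{n}\Hom(T,T[n])$ is a graded gentle algebra. The basis morphism at a common endpoint of two arcs sits in degree $t=\tc_2(0)-\tc_1(0)$, as recalled in the paper before \ref{D1}, and this is usually nonzero.

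Shifting individual arcs changes these degrees only by a coboundary. So you can make the arrows of a spanning tree of the quiver have degree $0$. But the signed degree sum around a cycle of the underlying graph, and the degree of a loop, is a winding number of the line field and cannot be changed.

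A concrete case is the Kronecker algebra. Take the arc $\gamma$ joining the two boundary components, together with the arc $\delta$ based at the outer marked point and encircling the hole. These form a $\gbullet$-FFAS. However, $\X(\delta)$ is, up to shift, a regular simple module $R$ with $\Ext^1(R,R)\neq 0$, so no choice of gradings makes $T$ tilting.

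Consequently, $\Hom(T_i,T_i[n])=0$ for $n\neq 0$ is unjustified. Keller's theorem then gives at best $\calX\simeq\per(B)$ for a graded gentle algebra $B$, viewed as a formal dg algebra, not an ordinary one. What you actually need is that each component of the cut surface, with the grading inherited from $\SURF$, admits a full formal arc system all of whose angles have degree $0$. Equivalently, its graded surface must come from an ungraded gentle algebra. Neither \ref{OD1}, \ref{OD2} nor the good-cut axioms provide this.

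The paper avoids tilting objects altogether. It chooses an FFAS $\Delta_i$ on each cut component $\SURF_i$, sets $B_i=A(\SURF_i)$, and glues curves back to obtain $\Phi_i:\per(B_i)\to\per(A)$. That gluing step tacitly needs the same compatibility, between the gradings computed from $\Delta_i^{\perp}$ and those computed from $\Dred$. So this is the point any proof has to confront.

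\textbf{The step you expected to be hardest is actually easy.} Once $B$ is an ordinary gentle algebra, orthogonality $\Hom(\X(D_1),\X(D_2)[n])=0$ forces all arrows between $D_1$- and $D_2$-vertices to point in one direction. This orthogonality is \ref{def-OD1}, or \ref{OD2} geometrically, not \ref{OD1} as you labelled it. Hence every nonzero path between two $D_i$-vertices stays inside $D_i$. So $e_iBe_i$ is just the full bound subquiver on $D_i$, which is automatically gentle; no erasing argument is needed.

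Your argument for (2), via corners of a triangular algebra inheriting finite global dimension, is fine once (1) is established in the ungraded setting. The paper instead uses that admissible subcategories of a homologically smooth category are smooth.
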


We next turn to the connection with the module category.
Given a family $\Gamma$ of permissible curves and assuming that $A$ has finite global dimension,
we consider a functor $F_{\Gamma}$ sending each $\MM(c)$ with $c\in\Gamma$ to $\X(\tc^{\rota})$.
Short exact sequences in $\modcat(A)$ become distinguished triangles in $\per(A)$,
which allows this construction to be extended from the generators to their extension closure.
We prove that $F_{\Gamma}$ is fully faithful, see Lemma \ref{lemm:FGamma} and Proposition \ref{prop:FGamma}.
Applying this construction to the inverse rotations of the two parts of a full formal arc system yields our second main result.

\begin{theorem}[{\rm Theorem \ref{thm:main 260725}}] \label{thm:main 2 260806}
Let $A$ be a homologically smooth gentle algebra {\rm(}i.e., $A$ has a finite global dimension{\rm)}.
Suppose that $\Delta=D_1\sqcup D_2$ satisfies the conditions \ref{OD1} and \ref{OD2}.
If every curve of $\Delta$ has an inverse rotation, then we have a semi-orthogonal decomposition
\[
\langle
  \langle
    F_{\Gamma}
    (\calC_1)
  \rangle_{\per(A)},
  \langle
    F_{\Gamma}
    (\calC_2)
  \rangle_{\per(A)}
\rangle,
\]
where $\Gamma=\Delta^{\antirota}:=\{c^{\antirota}: c\in\Delta\}$
{\rm(}$c^{\antirota}$ is the inverse rotation of $c${\rm)};
and for each $i\in\{1,2\}$, $\calC_i:=\add(\MM(D_i))$,
$D_i^{\antirota}:=\{c^{\antirota}: c\in D_i\}$,
and $\langle F_{\Gamma} (\calC_i) \rangle_{\per(A)}$
is the smallest full triangulated subcategory of the perfect derived category $\per(A)$
containing $F_{\Gamma} (\calC_i)$.
\end{theorem}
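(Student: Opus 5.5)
The strategy is to reduce the statement to Theorem \ref{thm:main 1 260806} (Theorem \ref{thm:main 260724}) together with the full faithfulness of $F_\Gamma$ from Lemma \ref{lemm:FGamma} and Proposition \ref{prop:FGamma}. Since $\Delta=D_1\sqcup D_2$ satisfies \ref{OD1} and \ref{OD2}, the proof of Theorem \ref{thm:main 260724} gives
\[
\per(A)=\bigl\langle \langle \X(D_1)\rangle_{\per(A)},\ \langle \X(D_2)\rangle_{\per(A)}\bigr\rangle ,
\]
where $\X(D_i)$ denotes the string complexes of the graded admissible curves in $D_i$, with gradings fixed by the full formal arc system. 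This uses homological smoothness, so that $\per(A)=\Dcat^b(A)$. It therefore suffices to show, for $i=1,2$,
\[
\langle F_\Gamma(\calC_i)\rangle_{\per(A)}=\langle \X(D_i)\rangle_{\per(A)} .
\]

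First, I check that $F_\Gamma$ is defined on each $\calC_i$. The curves $c^{\antirota}$ for $c\in D_i$ are permissible curves, and the modules $\MM(c^{\antirota})$ generate $\calC_i$. Here I read $\calC_i$ as $\add$ of the modules $\MM(c^{\antirota})$, $c\in D_i$, which is what the notation $\MM(D_i)$ must mean. By definition of the inverse rotation, $\widetilde{(c^{\antirota})}^{\rota}=c$ up to a choice of grading. Hence
\[
F_\Gamma\bigl(\MM(c^{\antirota})\bigr)=\X\bigl(\widetilde{(c^{\antirota})}^{\rota}\bigr)\cong \X(c)[n_c]
\]
for some shift $n_c\in\ZZ$. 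Because $F_\Gamma$ is additive (Lemma \ref{lemm:FGamma}), $F_\Gamma(\calC_i)$ consists of finite direct sums of shifts of the $\X(c)$ with $c\in D_i$. Thick triangulated closures are insensitive to shifts and direct sums, so $\langle F_\Gamma(\calC_i)\rangle_{\per(A)}=\langle \X(D_i)\rangle_{\per(A)}$. Full faithfulness from Proposition \ref{prop:FGamma} is not needed for the equality itself. It only ensures that $F_\Gamma(\calC_i)$ is an honest copy of $\calC_i$ inside $\per(A)$.

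Finally, I transfer the two defining properties of a semi-orthogonal decomposition. The Hom-vanishing $\Hom_{\per(A)}(\langle \X(D_1)\rangle,\langle \X(D_2)\rangle)=0$, or the opposite order according to the convention of the paper, is exactly what \ref{OD1} encodes. In the proof of Theorem \ref{thm:main 260724}, \ref{OD1} forbids oriented intersections between $D_1$ and $D_2$ in one direction, and this vanishing persists under shifts and extensions. For generation, $\Delta$ is full, so the arcs of $\Delta$ generate $\per(A)$, band complexes included; \ref{OD2} then yields for every object a triangle with outer terms in the two components.

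The main obstacle is the grading bookkeeping in the first step. I must verify that the rotation of the inverse rotation returns $c$ with a grading differing only by a global shift, and not as some other curve. I would check this directly from the local definition of $\rota$ at the endpoints on the $\gbullet$-boundary segments, using the hypothesis that $c^{\antirota}$ exists.
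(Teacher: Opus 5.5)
Your proposal is correct and follows essentially the paper's route: you obtain the semi-orthogonal decomposition $\langle\langle \X(D_1)\rangle_{\per(A)},\langle \X(D_2)\rangle_{\per(A)}\rangle$ from Proposition \ref{prop:OD}, applied to the permissible curves $D_1^{\antirota}, D_2^{\antirota}$. You then identify $\langle F_{\Gamma}(\calC_i)\rangle_{\per(A)}$ with $\langle \X(D_i)\rangle_{\per(A)}$ because $F_{\Gamma}$ sends $\MM(\varsigma^{\antirota})$ to a shift of $\X(\varsigma)$; the rotation/grading bookkeeping you flag as the main obstacle is exactly what Lemma \ref{lemm:antirota} supplies.

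One small slip: in your last paragraph you swap the roles of \ref{OD1} and \ref{OD2}. \ref{OD1} is the full-formal-arc-system condition giving generation, and \ref{OD2} is the orientation condition at common $\gbullet$-endpoints giving Hom-vanishing. That paragraph only re-derives what Proposition \ref{prop:OD} already provides, so nothing in your argument depends on it.
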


Furthermore, we have a corollary for the above theorem.

\begin{corollary}[{\rm Corollary \ref{coro:main 260725}}]
Keep the notations from Theorem \ref{thm:main 2 260806}.
If the $\calC_i$ are Abelian subcategories of $\modcat(A)$ and the canonical map
$\Ext^n_{\calC_i}(X,Y)\to \Ext^n_A(X,Y)$ is an isomorphism for all $X,Y\in\calC_i$ and all $n\>= 0$,
then the canonical functors $\Dcat^b(\calC_i)\to\Dcat^b(A)$ are fully faithful.
Under these hypotheses, the preceding semi-orthogonal decomposition induces
the derived decomposition $[\calC_1,\calC_2]$ of $\modcat(A)$.
\end{corollary}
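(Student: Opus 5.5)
The plan is to combine Theorem \ref{thm:main 2 260806}, which gives the semi-orthogonal decomposition $\per(A)=\langle\langle F_\Gamma(\calC_1)\rangle_{\per(A)},\langle F_\Gamma(\calC_2)\rangle_{\per(A)}\rangle$, with the Chen--Xi definition of a derived decomposition \cite{CX2021}. That definition asks for two things: each canonical functor $\Dcat^b(\calC_i)\to\Dcat^b(A)$ is fully faithful, and the essential images form a semi-orthogonal decomposition of $\Dcat^b(A)$. Because $A$ is homologically smooth, $\per(A)=\Dcat^b(A)$, so the decomposition of Theorem \ref{thm:main 2 260806} already lives in $\Dcat^b(A)$.

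\textbf{Step 1: full faithfulness.} Since $\calC_i$ is an abelian subcategory of $\modcat(A)$, the inclusion is exact and induces a triangle functor $\iota_i\colon\Dcat^b(\calC_i)\to\Dcat^b(A)$.
\begin{itemize}
\item[(a)] On objects concentrated in degree $0$, $\Hom(X,Y[n])$ equals $\Ext^n_{\calC_i}(X,Y)$ on the source side and $\Ext^n_A(X,Y)$ on the target side. The canonical comparison map is the one induced by $\iota_i$, so the hypothesis says $\iota_i$ is bijective on $\Hom(X,Y[n])$ for all $X,Y\in\calC_i$ and $n\geqslant 0$.
\item[(b)] For $n<0$ both sides vanish.
\item[(c)] Every object of $\Dcat^b(\calC_i)$ is an iterated extension of shifts of objects of $\calC_i$, via truncation triangles.
\item[(d)] A two-sided dévissage with the five lemma, by induction on the lengths of the cohomological supports of source and target, gives full faithfulness of $\iota_i$. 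This is the standard argument used in \cite{CX2021}.
\end{itemize}

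\textbf{Step 2: identifying the images.} We must show that the essential image of $\iota_i$ equals $\langle F_\Gamma(\calC_i)\rangle_{\per(A)}$.
\begin{itemize}
\item[(a)] The image of $\iota_i$ is the triangulated subcategory generated by $\calC_i$ viewed as stalk complexes. Fully faithful triangle functors have triangulated essential image, and by Step 1(c) that image is generated by $\calC_i$.
\item[(b)] By construction (Lemma \ref{lemm:FGamma} and Proposition \ref{prop:FGamma}), $F_\Gamma(M)$ is, for each $M$, the projective resolution of $M$, namely $\X(\tc^{\rota})$ for $M=\MM(c)$. It is therefore isomorphic in $\Dcat^b(A)$ to the stalk complex $M$.
\item[(c)] Hence $\langle F_\Gamma(\calC_i)\rangle_{\per(A)}$ and the triangulated subcategory generated by $\calC_i$ coincide as full subcategories of $\Dcat^b(A)$.
\end{itemize}
The one point to check is that $F_\Gamma$ agrees, up to natural isomorphism, with the restriction of the canonical embedding $\modcat(A)\to\Dcat^b(A)$ on objects of $\calC_i$. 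That is exactly what the rotation construction encodes, since $\X(\tc^{\rota})$ is the projective resolution of $\MM(c)$.

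\textbf{Step 3: conclusion.} Substituting Step 2 into Theorem \ref{thm:main 2 260806} gives $\Dcat^b(A)=\langle \iota_1\Dcat^b(\calC_1),\iota_2\Dcat^b(\calC_2)\rangle$ with both $\iota_i$ fully faithful by Step 1. This is precisely the derived decomposition $[\calC_1,\calC_2]$ of $\modcat(A)$.

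The main obstacle is Step 1(d): it requires care with the compatibility of the comparison maps with Yoneda composition and with the connecting morphisms. I expect it to be handled by citing the corresponding general lemma in \cite{CX2021} rather than redoing the dévissage.
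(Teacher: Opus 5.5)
Your proposal is correct and follows essentially the same route as the paper. You obtain full faithfulness of $\Dcat^b(\calC_i)\to\Dcat^b(A)$ from the Ext-comparison hypothesis (the paper only asserts this, while you sketch the standard d\'evissage), identify each essential image with the corresponding component via $F_\Gamma(M)\cong M[0]$, and read off $[\calC_1,\calC_2]$; the paper merely phrases the identification as $\Ima(\Dcat^b(\emb_i))=\langle\X(D_i)\rangle_{\Dcat^b(A)}$, proved by two inclusions (one using Theorem \ref{thm:Chang} and the fact that the essential image is triangulated), which is a cosmetic difference.
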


The paper is organized as follows.
In Section \ref{sec:prelim}, we recall gentle algebras, their marked ribbon surfaces,
the geometric models for module and perfect derived categories,
semi-orthogonal decompositions, good cuts, and derived decompositions of Abelian categories.
In Section \ref{sect:rota}, we study rotations and inverse rotations of curves
and characterize when an admissible curve has an inverse rotation.
We then establish the full formal arc system description of semi-orthogonal decompositions, construct the fully faithful functors
$F_{\Gamma}$, see Section \ref{sec:semi-orth of gentle}, and apply them to the derived-decomposition problem in Section \ref{sect:derivedcompos}.

\section{Preliminary} \label{sec:prelim}

\subsection{Gentle algebras and their marked ribbon surfaces}

Recall that a \defines{gentle pair} $(\Q,\I)$ is a bound quiver satisfying the following conditions:
\begin{enumerate}[label=\textrm{(G\arabic*)}]
  \item Any vertex of $\Q$ is the source and target of at most two arrows.

  \item For each arrow $a:x\to y$, there is at most one arrow $b$
  whose source is $y$ such that $ab\in\I$,
  and there is at most one arrow $b$ whose source is $y$
  such that $ab\notin\I$.

  \item For each arrow $a:x\to y$, there is at most one arrow $b$
  whose target is $x$ such that $ba\in\I$,
  and there is at most one arrow $b$ whose target is $x$
  such that $ba\notin\I$.

  \item $\I$ is admissible and it is generated by paths of length $2$.
\end{enumerate}

\begin{definition}[\!{\cite{AS1987}}]
We call that $A=\kk\Q/\I$ is a \defines{gentle algebra} if its bound quiver is gentle.
\end{definition}

\begin{example} \label{examp:gentle260802} \rm
Let $(\Q, \I)$ is the pair given by the quiver
\[\Q = ~ \xymatrix{
  1 \ar[r]^{a_1} & 2 \ar[r]^{a_2} & 3 & 4 \ar[l]_{a_3} \ar[r]^{a_4} & 5
}\]
and the ideal $\I=\langle a_1a_2\rangle$.
Then it is a gentle pair and the algebra $A=\kk\Q/\I$ is a gentle algebra.
\end{example}

Let $\Surf$ be a surface with a nonempty boundary $\bSurf$.
A \defines{marked surface} is a triple $(\Surf, \M, \Y)$, where:
\begin{enumerate}[label=\textrm{(\arabic*)}]
  \item $\M$ and $\Y$ are finite subsets of $\bSurf$;
  \item the elements of $\Y$ fall into two classes:
    those lying on some boundary components of $\Surf$ that contain no element of $M$ and contain exactly one element of $\Y$;
    the remaining elements of $\Y$ alternate with the elements of $M$ on each of the other boundary components.
\end{enumerate}
Elements in $\M$ are called \defines{open marked points} or \defines{$\gbullet$-marked points},
and elements in $\Y$ are called \defines{closed marked points} or \defines{$\rbullet$-marked points}.
A $\gbullet$-curve (resp. a $\rbullet$-curve) is a curve in $\Surf$ whose endpoints are $\gbullet$-marked points (resp. $\rbullet$-marked points) \checks{if its endpoints exist}.
We always assume that arbitrary two curves are representatives in their homotopy classes such that their intersections are minimal.

A \defines{full formal $\gbullet$-arc system} (= $\gbullet$-FFAS, for short) of $(\Surf, \M, \Y)$, say $\Dblue$,
is a set of some $\gbullet$-curves such that:
\begin{enumerate}[label=\textrm{(\arabic*)}]
  \item any two $\gbullet$-curves of $\Dblue$ have no intersection in $\innerSurf$;
  \item every \defines{elementary $\gbullet$-polygon}, the polygon obtained by $\Dblue$ cutting $\Surf$,
    has a unique edge on $\bSurf$
    (note that the elementary $\gbullet$-polygon shown in \Pic \ref{fig:inftypolygon} is called an $\infty$-elementary $\gbullet$-polygon in our paper, and the boundary component $b$ of $\Surf$ without $\gbullet$-marked point is seen as a boundary of this polygon).
\end{enumerate}

\begin{figure}[H]
  \centering
\definecolor{ffqqqq}{rgb}{1,0,0}
\definecolor{bluearc}{rgb}{0,0,1}
\begin{tikzpicture}[scale=0.75] \small
\filldraw[black!20] (0,0) circle (0.3cm);
\draw[line width=1.2pt] (0,0) circle (0.3cm);
\draw[bluearc][line width=1.2pt] (1.73,1) -- (0, 2) -- (-1.73, 1);
\draw[bluearc][line width=1.2pt, dotted] (-1.73, 1) -- (-1.73,-1);
\draw[bluearc][line width=1.2pt] (-1.73,-1) -- (0,-2) -- (1.73,-1) -- (1.73,1);
\fill [bluearc] ( 1.73, 1) circle (2.8pt);
\fill [bluearc] ( 0.00, 2) circle (2.8pt);
\fill [bluearc] (-1.73, 1) circle (2.8pt);
\fill [bluearc] (-1.73,-1) circle (2.8pt);
\fill [bluearc] ( 0.00,-2) circle (2.8pt);
\fill [bluearc] ( 1.73,-1) circle (2.8pt);
\fill [bluearc] ( 1.73, 1) circle (2.8pt);
\draw [red][line width=0.55pt] (0,0.3) -- (-0.89, 1.45);
\draw [red][line width=0.55pt] (0,0.3) -- ( 0.89, 1.45);
\draw [red][line width=0.55pt] (0,0.3) to[out= 40,in=140] ( 1.73, 0.00);
\draw [red][line width=0.55pt] (0,0.3) to[out= 20,in= 70] ( 0.89,-1.45);
\draw [red][line width=0.55pt] (0,0.3) to[out=160,in=110] (-0.89,-1.45);
\draw [red][line width=0.55pt] (0,0.3) to[out=140,in= 40] (-1.73, 0.00) [dotted];
\fill [red] (0,0.3) circle (2.8pt); \fill [white] (0,0.3) circle (2pt);
\draw (0,0) node{$b$};
\end{tikzpicture}
  \caption{$\infty$-elementary $\gbullet$-polygon}
  \label{fig:inftypolygon}
\end{figure}

All elements in $\Dblue$ are called \defines{$\Dblue$-arcs}.
For any elementary $\gbullet$-polygon $\PP$, we denote by $\Egreen(\PP)$ the set of all $\Dblue$-arcs.
Similarly, we can define \defines{full formal $\rbullet$-arc system} (= $\rbullet$-FFAS, for short),
\defines{$\Dred$-arcs}, \defines{elementary $\rbullet$-polygon $\PP$}, and $\Ered(\PP)$.
Note that all $\rbullet$-marked points lying in digon are called \defines{extra marked points}
and we denote by $\E$ the set of all extra marked points. Obviously, $\E$ is a subset of $\Y$.

\begin{definition}\rm
A \defines{marked ribbon surface} $\SURF:=(\Surf, \M, \Y, \Dblue, \Dred)$
is a marked surface $(\Surf, \M, \Y)$ with $\gbullet$-FFAS $\Dblue$ and $\rbullet$-FFAS $\Dred$
such that $\Dred$ is the \defines{dual dissection} of $\Dblue$,
that is, for any $\rbullet$-curve $a_{\rbullet}$, there is exactly a unique $\gbullet$-curve $a_{\gbullet}$ intersecting with $a_{\rbullet}$ (in this case, $a_{\gbullet}$ and $a_{\rbullet}$ have only one intersection).
We say $a_{\gbullet}$ (resp., $a_{\rbullet}$) is the \defines{dual arc} of $a_{\rbullet}$ (resp., $a_{\gbullet}$)
and write it as $a_{\rbullet}^{\bot}$ (resp., $a_{\gbullet}^{\bot}$).
\end{definition}

Any marked ribbon surface $\SURF$ defines a graded algebra by the following construction.

\begin{construction} \label{construction} \rm
The algebra $A(\SURF)$ of $\SURF$ is the finite dimensional algebra $\kk\Q/\I$ given by the following steps:
\begin{itemize}
  \item[Step 1]
    there is a bijection $\pmb{V}: \Dblue \to \Q_0$, we write $\Q_0 = \Dblue$ in this paper without causing confusion
    (or equivalently, there is a bijection $\pmb{W}: \Dred \to \Q_0$, and we write $\Q_0 = \Dblue$ without causing confusion);
  \item[Step 2]
    any elementary $\gbullet$-polygon $\PP$ given by $\Dblue$ provides some arrows $\alpha: \pmb{V}(a^1_{\gbullet}) \to \pmb{V}(a^2_{\gbullet})$,
    where $a^1_{\gbullet}, a^2_{\gbullet}\in \Dblue$ are two edges of $\PP$ with common endpoints $p\in \M$
    and $a^2_{\gbullet}$ is left to $a^1_{\gbullet}$ at $p$
    (or equivalently, any elementary $\rbullet$-polygon $\PP'$ given by $\Dred$ provides some arrows $\alpha: \pmb{W}(a_{\rbullet}^1) \to \pmb{W}(a^2_{\rbullet})$,
    where $a_{\rbullet}^1=(a^1_{\gbullet})^{\bot}, a_{\rbullet}^2 = (a^2_{\gbullet})^{\bot}\in \Dred$
    are two edges of $\PP'$ with common endpoints $q\in \Y$,
    and $a^2_{\rbullet}$ is right to $a^1_{\rbullet}$ at $q$), see \Pic \ref{fig:surf-arrow};
\begin{figure}[H]
  \centering
\begin{tikzpicture}
\fill[black!25] (-1,0)--(1,0)--(1,-0.2)--(-1,-0.2)--(-1,0);
\draw (-1,0)--(1,0);
\fill[blue] (0,0) circle(2pt) node[below, black]{$p$};
\draw[blue][line width=1pt] ( 0, 0) -- ( 1.5, 1.5) node[above right, black]{$a^1_{\gbullet}$};
\draw[blue][line width=1pt] ( 0, 0) -- (-1.5, 1.5) node[above  left, black]{$a^2_{\gbullet}$};
\draw[ red][line width=1pt] ( 0, 2) -- (-1.5, 0.5) node[below  left, black]{$(a^2_{\gbullet})^{\bot}=a^2_{\rbullet}$};
\draw[ red][line width=1pt] ( 0, 2) -- ( 1.5, 0.5) node[below right, black]{$a^1_{\rbullet}=(a^1_{\gbullet})^{\bot}$};
\draw[black][line width=1pt][<-] (-0.71,0.71) arc(135:45:1);
\fill[ red ] (0,2) circle (0.1cm);
\fill[white] (0,2) circle (1.8pt);
\draw[ red ] (0,2) node[above]{$q$};
\draw[black] (0,1) node[above]{$\alpha$};
\end{tikzpicture}
  \caption{Two $\Dblue$-arcs $a^2_{\gbullet}$ and $a^1_{\gbullet}$}
\vspace{-3mm}
\begin{center}
  ($a^2_{\gbullet}$ is left to $a^1_{\gbullet}$ at the $\gbullet$-marked point $p$,
  and $a^2_{\rbullet}$ is left to $a^1_{\rbullet}$ at the $\rbullet$-marked point $q$)
\end{center}
  \label{fig:surf-arrow}
\end{figure}
  \item[Step 3]
    the ideal $\I$ is generated by $\alpha\beta$, where $\pmb{V}^{-1}(\s(\alpha)), \pmb{V}^{-1}(\t(\alpha))=\pmb{V}^{-1}(\s(\beta)),\pmb{V}^{-1}(\t(\beta))$
    are edges of the same elementary $\gbullet$-polygon which position are of the form shown in \Pic \ref{fig:surf-relation}.
\begin{figure}[H]
  \centering
\begin{tikzpicture}
\fill[blue]
  ( 2.00, 0.00) circle(2pt)
  ( 1.00, 1.73) circle(2pt)
  (-1.00, 1.73) circle(2pt)
  (-2.00, 0.00) circle(2pt);
\draw[blue][line width=1pt]
  ( 2.00, 0.00) -- ( 1.00, 1.73) -- (-1.00, 1.73) -- (-2.00, 0.00);
\draw
  ( 1.50, 0.86) node[above right]{$\pmb{V}^{-1}(\s(\alpha))$}
  (-1.50, 0.86) node[above left ]{$\pmb{V}^{-1}(\t(\beta))$}
  ( 0.00, 1.73) node[above]{$\pmb{V}^{-1}(\s(\beta))=\pmb{V}^{-1}(\t(\alpha))$};
\draw[line width=1pt][->]
  ( 1.50, 0.86) -- ( 0.10, 1.67);
\draw[line width=1pt][->]
  (-0.10, 1.67) -- (-1.50, 0.86);
\draw ( 0.5, 1.15) node{$\alpha$};
\draw (-0.5, 1.15) node{$\beta$};
\fill[black!25] ( 0.0,-0.50) circle(0.5cm);
\draw[black!99] ( 0.0,-0.50) circle(0.5cm) [line width=1pt][dashed];
\draw[ red ] [line width=1pt] ( 0.0, 0.0) -- ( 0.0, 2.5);
\draw[ red ] [line width=1pt] ( 0.0, 0.0) -- ( 2.0, 0.7);
\draw[ red ] [line width=1pt] ( 0.0, 0.0) -- (-2.0, 0.7);
\fill[ red ] (0,0) circle (0.1cm);
\fill[white] (0,0) circle (1.8pt);
\end{tikzpicture}
  \caption{The path $\alpha\beta \in \I$ given by three $\Dblue$-arcs ($\Dred$-arcs)}
  \label{fig:surf-relation}
\end{figure}
\end{itemize}
\end{construction}

\begin{remark} \label{rmk:FFAS} \rm
Note that every marked surface induces a triangulated category, whose indecomposable objects are $\gbullet$-curves equipped with an ``index'' (this index is called a grading), and whose morphisms are determined jointly by the angles at the intersections of these curves and the ``indices'' on them, see \cite{HKK2017}.
Furthermore, authors showed that each marked surface is equivalent to the derived category of a gentle algebra with finite global dimension.
In the works of Opper, Plamondon, and Schroll in \cite{OPS2018},
authors show that the above result holds for gentle algebra with infinite global dimension (see \cite[Section 1.3]{OPS2018}).
Thus, we have known that every gentle algebra corresponds to a marked surface,
and each marked surface has a $\gbullet$-FFAS and a $\rbullet$-FFAS.
The same result also appeared in the works of Baur and Coelho Sim\~{o}es in \cite{BCS2021}.
\end{remark}

\begin{example} \label{examp:gentle260802-surf} \rm
In this example, we provide the marked ribbon surface of the gentle algebra $A$ given in Example \ref{examp:gentle260802},
see \Pic \ref{fig:gentle260802-surf}.
\begin{figure}[H]
  \centering
\begin{tikzpicture}
\draw[red] (-1.00,-1.73) -- (-2.00, 0.00);
\draw[red] (-1.00,-1.73) -- (-1.00, 1.73);
\draw[red] (-1.00,-1.73) -- ( 1.00, 1.73);
\draw[red] ( 1.00, 1.73) -- ( 1.00,-1.73);
\draw[red] ( 1.00,-1.73) -- ( 2.00, 0.00);
\draw[blue][line width=1pt]
  (-1.73,-1.00) -- (-1.73, 1.00) --
  ( 0.00, 2.00) -- ( 0.00,-2.00) --
  ( 1.73, 1.00) -- ( 1.73,-1.00);
\draw[line width=1pt] (0,0) circle(2cm);
\foreach \x in {0,60,120,180,240,300}
\fill[blue][rotate= \x] (0,2) circle(1mm);
\foreach \x in {0,60,120,180,240,300}
\fill[white][rotate= \x] (2,0) circle(1mm);
\foreach \x in {0,60,120,180,240,300}
\draw[red][line width=0.7pt][rotate= \x] (2,0) circle(1mm);
\end{tikzpicture}
  \caption{The marked ribbon surface of the gentle algebra given in Example \ref{examp:gentle260802}}
  \label{fig:gentle260802-surf}
\end{figure}
\end{example}

An \defines{$\Dblue$-arc segment} in $\SURF$ is a homotopy class of segments in some elementary $\gbullet$-polygon which have four cases shown in \Pic \ref{fig:arc segment I},
and an \defines{$\Dred$-arc segment} is a homotopy class of segments in some elementary $\rbullet$-polygon which have two cases shown in \Pic \ref{fig:arc segment II}.

\begin{figure}[H]
\centering
\definecolor{ffqqqq}{rgb}{1,0,0}
\definecolor{bluearc}{rgb}{0,0,1}
\begin{tikzpicture}
\draw[black] (-0.5,2)--( 0.5,2) [line width=1pt];
\draw[bluearc] ( 0, 2)--(-1, 0) [line width=1pt];
\draw[bluearc] ( 0, 2)--( 1, 0) [line width=1pt];
\fill[bluearc] ( 0, 2) circle (0.1cm);
\fill[bluearc] (-1, 0) circle (0.1cm);
\draw[orange][line width=1pt] (-1, 0) -- ( 0.5, 1);
\draw (0,-0.5) node{Case $\gbullet$-A};
\end{tikzpicture}
\ \ \
\begin{tikzpicture}
\draw[black] (-0.5,2)--( 0.5,2) [line width=1pt];
\draw[bluearc] ( 0, 2)--(-1, 0) [line width=1pt];
\draw[bluearc] ( 0, 2)--( 1, 0) [line width=1pt];
\fill[bluearc] ( 0, 2) circle (0.1cm);
\draw[orange][line width=1pt] (-0.5, 1) to[out=-45, in=-135] ( 0.5, 1);
\draw (0,-0.5) node{Case $\gbullet$-B};
\end{tikzpicture}
\ \ \
\begin{tikzpicture}
\draw[black] (0,2) to[out=180,in=90] (-2,0) [line width=1pt];
\draw[bluearc] ( 0, 2) to[out=-90,in=0] (-2, 0) [line width=1pt];
\fill[bluearc] ( 0, 2) circle (0.1cm);
\fill[bluearc] (-2, 0) circle (0.1cm);
\fill[ red ] (-1.41, 1.41) circle (0.10cm) [line width=1pt];
\fill[white] (-1.41, 1.41) circle (0.07cm) [line width=1pt];
\draw[orange] (-1.41, 1.41) -- (-0.57, 0.57) [line width=1pt];
\draw (-1,-0.5) node{Case $\gbullet$-C};
\end{tikzpicture}
\ \ \
\begin{tikzpicture}
\draw[black] (-1,2) -- (1,2) [line width=1pt];
\fill[black!25] (0,1) circle(0.2cm) [line width=1pt];
\draw[black] (0,1) circle(0.2cm) [line width=1pt];
\fill[bluearc] (0,2) circle (0.1cm);
\draw[bluearc][line width=1pt] (0,2) to[out=-135,in=90] (-1,1)
  arc(180:360:1) to[out=90,in=-45] (0,2);
\draw[orange][line width=1pt]
  ( 0.00, 2.00) to[out=-45,in=90] (0.5, 1) arc(0:-270:0.5) -- (1,1.5);
\fill[ red ] (0,1.2) circle (0.10cm) [line width=1pt];
\fill[white] (0,1.2) circle (0.07cm) [line width=1pt];
\draw (0,-0.5) node{Case $\gbullet$-D};
\end{tikzpicture}
\caption{$\Dblue$-arc segments}
\label{fig:arc segment I}
\end{figure}

\begin{figure}[H]
\centering
\definecolor{ffqqqq}{rgb}{1,0,0}
\definecolor{bluearc}{rgb}{0,0,1}
\begin{tikzpicture} [scale=1.01]
\draw[black] (-0.5*1.5, -0.86*1.5) -- ( 0.5*1.5, -0.86*1.5) [line width=1pt];
\draw[red] ( 0.5*1.5,-0.86*1.5) -- ( 1*1.5, 0) [line width=1pt];
\draw[red] ( 1*1.5, 0) -- ( 0.5*1.5, 0.86*1.5) [line width=1pt][dotted];
\draw[red] (-0.5*1.5, 0.86*1.5) -- ( 0.5*1.5, 0.86*1.5) [line width=1pt];
\draw[red] (-1*1.5, 0) -- (-0.5*1.5, 0.86*1.5) [line width=1pt][dotted];
\draw[red] (-0.5*1.5,-0.86*1.5) -- (-1*1.5, 0) [line width=1pt];
\fill[red] ( 0.5*1.5,-0.86*1.5) circle (0.1cm);
\fill[white] ( 0.5*1.5, -0.86*1.5) circle (1.8pt);
\fill[red] ( 1*1.5, 0) circle (0.1cm);
\fill[white] ( 1*1.5, 0) circle (1.8pt);
\fill[red] ( 0.5*1.5, 0.86*1.5) circle (0.1cm);
\fill[white] ( 0.5*1.5, 0.86*1.5) circle (1.8pt);
\fill[red] (-0.5*1.5, -0.86*1.5) circle (0.1cm);
\fill[white] (-0.5*1.5, -0.86*1.5) circle (1.8pt);
\fill[red] (-1*1.5, 0) circle (0.1cm);
\fill[white] (-1*1.5, 0) circle (1.8pt);
\fill[red] (-0.5*1.5, 0.86*1.5) circle (0.1cm);
\fill[white] (-0.5*1.5, 0.86*1.5) circle (1.8pt);
\fill[bluearc] (0,-0.86*1.5) circle (0.1cm) [line width=1pt];
\draw[violet] (0,-0.86*1.5) -- (0, 0.86*1.5) [line width=1pt];
\draw (0,-1.8) node{Case $\rbullet$-A};
\end{tikzpicture}
\ \ \
\begin{tikzpicture} [scale=1.01]
\draw[red] ( 0.5*1.5,-0.86*1.5) -- ( 1*1.5, 0) [line width=1pt][dotted];
\draw[red] ( 1*1.5, 0) -- ( 0.5*1.5, 0.86*1.5) [line width=1pt];
\draw[red] (-0.5*1.5, 0.86*1.5) -- ( 0.5*1.5, 0.86*1.5) [line width=1pt][dotted];
\draw[red] (-1*1.5, 0) -- (-0.5*1.5, 0.86*1.5) [line width=1pt];
\draw[red] (-0.5*1.5,-0.86*1.5) -- (-1*1.5, 0) [line width=1pt][dotted];
\fill[red] ( 0.5*1.5,-0.86*1.5) circle (0.1cm);
\fill[white] ( 0.5*1.5, -0.86*1.5) circle (1.8pt);
\fill[red] ( 1*1.5, 0) circle (0.1cm);
\fill[white] ( 1*1.5, 0) circle (1.8pt);
\fill[red] ( 0.5*1.5, 0.86*1.5) circle (0.1cm);
\fill[white] ( 0.5*1.5, 0.86*1.5) circle (1.8pt);
\fill[red] (-0.5*1.5, -0.86*1.5) circle (0.1cm);
\fill[white] (-0.5*1.5, -0.86*1.5) circle (1.8pt);
\fill[red] (-1*1.5, 0) circle (0.1cm);
\fill[white] (-1*1.5, 0) circle (1.8pt);
\fill[red] (-0.5*1.5, 0.86*1.5) circle (0.1cm);
\fill[white] (-0.5*1.5, 0.86*1.5) circle (1.8pt);
\draw[violet] (-1.15, 0.86*0.75) -- ( 1.15, 0.86*0.75) [line width=1pt];
\draw (0,-1.8) node{Case $\rbullet$-B};
\end{tikzpicture}
\caption{$\Dred$-arc segments}
\label{fig:arc segment II}
\end{figure}

\begin{definition} \label{def:perm and adm curve}
\rm Let $\SURF$ be a marked ribbon surface and $c$ be a curve in $\SURF$.
\begin{enumerate}[label=\textrm{(\arabic*)}]
\item[(1)]
  We call that a curve $c: [0,1] \to \Surf$ is a \defines{permissible curve} (see \Pic \ref{fig:perm curve}) if
  it is a sequence of $\Dblue$-arc segments $\{c_{(i,i+1)}\}_{0\=< i\=< m(c)}$ ($m(c)\in\NN$) such that
  the following conditions hold:
  \label{def:perm curv}
  \begin{itemize}
  \item[(1.1)] $c(t_i)=c_{(i,i+1)}(t_i)$ and $c(t_{m(c)+1})=c_{(m(c), m(c)+1)}(t_{m(c)+1})$, where $0=t_0 <t_1 <\cdots <t_{m(c)} < t_{m(c)+1} = 1$ (note that $c(0)=c(1)\in\Surf\backslash\bSurf$ is allowed to occur);
  \item[(1.2)] two adjacent $\Dblue$-arc segments $c_{(i,i+1)}$ and $c_{(i+1,i+2)}$ are different,
      that is, $c_{(i,i+1)}\ne c_{(i+1,i+2)}$ and $c_{(i,i+1)}\ne c_{(i+1,i+2)}^{-1}$.
  \end{itemize}
  Moreover, we say that any curve $c: [0,1] \to \Surf$ with $c(0), c(1) \in \M\cap\E$
  crossing no $\gbullet$-arc is a \defines{trivial permissible curve}.

\begin{figure}[H]
  \centering
\begin{tikzpicture}
\foreach \x in {-4,-2,0,2}
\draw[shift={(\x,0)}][blue][line width=1pt] (0,-1) node{$\gbullet$} -- (0,1) node{$\gbullet$};
\draw[shift={( 4,0)}][blue][line width=1pt] (0,-1) node{$\gbullet$} -- (0,1) node{$\gbullet$} [dashed];
\draw[orange][->] (-5,0)--( 5,0); \draw[orange] (5,0) node[right]{$c$}; \draw (-5,0) node{$\bullet$};
\foreach \x in {-4,-2,0,2,4}
\fill[orange] (\x,0) circle(1pt);
\draw[orange] (-5,0) node[above]{$c_{(0,1)}$};
\draw[orange] (-3,0) node[above]{$c_{(1,2)}$};
\draw[orange] (-1,0) node[above]{$c_{(2,3)}$};
\draw[orange] ( 1,0) node[above]{$c_{(3,4)}$};
\draw (-5,0) node[below left]{$c(0)=c(t_0)$};
\draw (-4,0) node[below left]{$c(t_1)$};
\draw (-2,0) node[below left]{$c(t_2)$};
\draw ( 0,0) node[below left]{$c(t_3)$};
\draw ( 2,0) node[below left]{$c(t_4)$};
\end{tikzpicture}
  \caption{Permissible curve}
  \label{fig:perm curve}
\end{figure}

\item[(2a)]
  We call that a $\gbullet$-curve $c: [0,1] \to \Surf$ is a \defines{finite admissible curve} (see \Pic \ref{fig:adm curve}) if
  it is a sequence of $\Dred$-arc segments $\{c_{[i,i+1]}\}_{0\=< i\=< n(c)}$ ($n(c)\in\NN$)
  with a grading $\tc$ such that the following conditions hold:
  \label{def:adm curv}
  \begin{itemize}
  \item[(2a.1)] $c(t_i)=c_{[i,i+1]}(t_i)$ and $c(t_{n(c)+1})=c_{[n(c), n(c)+1]}(t_{n(c)+1})$,
      where $0=t_0 <t_1 <\cdots <t_{n(c)} < t_{n(c)+1} = 1$
      (note that $c(0)=c(1)\in\Surf\backslash\bSurf$ is allowed to occur);
  \item[(2a.2)] two adjacent $\Dred$-arc segments $c_{[i,i+1]}$ and $c_{[i+1,i+2]}$ are different;
  \item[(2a.3)] each $\Dred$-arc $c_{[i,i+1]}$ of $c$ lies on an elementary $\rbullet$-polygon $\PP_i$
    ($\PP_i$ has only one edge $\gamma_i$ on the boundary of $\Surf$,
    there is a unique marked point $p_i$ on $\gamma_i$),
    and $c$ has a natural \defines{grading}
    \[\tc: \{c(t_i) : i\} \to \ZZ\]
    given by
\[\tc_{i}=\tc(c(t_i)) := \begin{cases}
  \tc_{i-1} +1, & \text{if $p_i$ is on the left of $c_{[i,i+1]}(t_i)$}; \\
  \tc_{i-1} -1, & \text{if $p_i$ is on the right of $c_{[i,i+1]}(t_i)$}.
  \end{cases}\]
Cf. \cite{OPS2018}. \checks{We note that, in the complex associated with a projective resolution of a module, we will assume that $\min_{i} \tc_i = 0$ in this paper.}
    
  \item[(2a.4)] if $c(0)=c(1)$, then $\tc_0=\tc_{n(c)+1}$ lies in the inner $\innerSurf$ of $\Surf$.
  \end{itemize}
\begin{figure}[H]
  \centering
\begin{tikzpicture}
\foreach \x in {-4,-2,0,2,4}
\draw[shift={(\x,0)}][red][line width=1pt] (0,-1) -- (0,1);
\draw[violet][->] (-5,0)--( 5,0); \draw[violet] (5,0) node[right]{$c$}; \draw[blue] (-5,0) node{$\bullet$};
\draw[violet][->] (-5,0)--( 1,0);
\draw[white][dashed][line width=2pt] (-1.5,0)--(-0.5,0);
\foreach \x in {-4,-2,0,2,4}
\fill[violet] (\x, 0.0) circle(1pt);
\draw[violet] (-5, 0.0) node[above]{$c_{[0,1]}$};
\draw[violet] (-3, 0.0) node[above]{$c_{[1,2]}$};
\draw[violet] ( 1, 0.0) node[above]{$c_{[i,i+1]}$};
\draw[ black] ( 1, 0.5) node[above]{$\PP_i$};
\draw[violet] ( 3, 0.0) node[above]{$c_{[i+1,i+2]}$};
\draw (-5,0) node[below left]{$c(0)=c(t_0)$};
\draw[shift={(-4,0)}][cyan] ( 0.0,-0.2) arc(-90:-180:0.2) [->];
\draw[shift={(-2,0)}][cyan] ( 0.0,-0.2) arc(-90:-180:0.2) [->];
\draw[shift={( 0,0)}][cyan] ( 0.0,-0.2) arc(-90:-180:0.2) [->];
\draw[shift={( 2,0)}][cyan] ( 0.0,-0.2) arc(-90:-180:0.2) [->];
\draw[cyan] (-4.0, 0.0) node[below left]{$\tc_1$};
\draw[cyan] (-2.0, 0.0) node[below left]{$\tc_2$};
\draw[cyan] ( 0.0, 0.0) node[below left]{$\tc_i$};
\draw[cyan] ( 2.0, 0.0) node[below left]{$\tc_{i+1}$};
\draw[cyan] ( 1.5,-1.0) node[below]{\footnotesize{$\tc_{i+1}=\tc_i+1$}};
\fill[black!25] (0.5,2) -- (1.5,2) -- (1.5,2.2) -- (0.5,2.2);
\draw[line width=1pt]       (0.5,2.0) -- (1.5,2.0);
\draw[line width=1pt][ red] (0.5,2.0) -- (0.0,1.0) [dashed];
\draw[line width=1pt][ red] (1.5,2.0) -- (2.0,1.0) [dashed];
\fill[ red ] (0.0,1.0) circle(2pt); \fill[white] (0.0,1.0) circle(1.4pt);
\fill[ red ] (2.0,1.0) circle(2pt); \fill[white] (2.0,1.0) circle(1.4pt);
\fill[ red ] (0.5,2.0) circle(2pt); \fill[white] (0.5,2.0) circle(1.4pt);
\fill[ red ] (1.5,2.0) circle(2pt); \fill[white] (1.5,2.0) circle(1.4pt);
\fill[blue ] (1.0,2.0) circle(2pt); \draw[ blue] (1.0,2.0) node[above]{$p_i$};
\draw[black] (1.3,2.0) node[below]{$\gamma_i$};
\end{tikzpicture}
  \caption{Permissible curve}
  \label{fig:adm curve}
\end{figure}
  
  \item[(2b)] We call that a $\gbullet$-curve $c: (0,1]\to \Surf$ is a \defines{left infinite admissible curve} if it is a sequence of $\Dred$-arc segments $\{c_{[i,i+1]}\}_{-\infty\=< i\=< N}$ ($N \in\NN$) such that:
  \begin{itemize}
    \item[(2b.1)] $c(t_i)=c_{[i,i+1]}(t_i)$ and $c(t_{N+1})=c_{[N,N+1]}(t_{N+1})$ where $(0<) \cdots <t_{-1}<t_0 <t_1 <\cdots <t_{N} < t_{N+1} = 1$;
    \item[(2b.2)] the conditions (2.2) and (2.3) holds;
    \item[(2b.3)] if the subcurve $c':(0,t_i]\to \Surf$ surrounds some boundary component $b$ of $\Surf$, 
      then it must be anticlockwise surrounds it, see \Pic \ref{fig:infty adm curve} (1).
\begin{figure}[H]
  \centering
\begin{tikzpicture}
\fill[black!25] (0,0) circle (0.3cm);
\draw[black][line width=1pt] (0,0) circle (0.3cm);
\draw[violet] (-2,1.6) -- (0,1.6) [<-];
\foreach \x in {1.4,1.2,...,0.6}
\draw[violet] (0,\x+0.2) arc(90:-180:\x+0.2) to[out=90,in=180] ( 0.0, \x);
\draw[violet] (0,0.4+0.2) arc(90:-180:0.4+0.2);
\draw (0,-2) node{$(2)$};
\end{tikzpicture}
\ \
\begin{tikzpicture}
\fill[black!25] (0,0) circle (0.3cm);
\draw[black][line width=1pt] (0,0) circle (0.3cm);
\draw[violet] (-2,1.6) -- (0,1.6);
\foreach \x in {1.4,1.2,...,0.6}
\draw[violet] (0,\x+0.2) arc(90:-180:\x+0.2) to[out=90,in=180] ( 0.0, \x);
\draw[violet] (0,0.4+0.2) arc(90:-180:0.4+0.2) [->];
\draw (0,-2) node{$(1)$};
\end{tikzpicture}
\caption{Left and right infinite admissible curve}
\label{fig:infty adm curve}
\end{figure}
  \end{itemize}
  \item[(2c)] We call that a $\gbullet$-curve $c: [0,1)\to \Surf$ is a \defines{right infinite admissible curve} if it is a sequence of $\Dred$-arc segments $\{c_{[i,i+1]}\}_{-N\=< i\=< \infty}$ ($N \in\NN$) such that:
  \begin{itemize}
    \item[(2c.1)] $c(t_i)=c_{[i,i+1]}(t_i)$ and $c(t_{-N})=c_{[-N,-N+1]}(t_{-N})$ where $0=t_{-N}< \cdots <t_{-1}<t_0 <t_1 <\cdots (<1)$;
    \item[(2c.2)] the conditions (2.2) and (2.3) holds;
    \item[(2c.3)] if the subcurve $c':[t_i,1)\to \Surf$ surrounds some boundary component $b$ of $\Surf$,
      then it must be clockwise surrounds it, see \Pic \ref{fig:infty adm curve} (2).
  \end{itemize}
   
  \item[(2d)] An \defines{infinite admissible curve} is both left infinite and right infinite.
\end{enumerate}
\end{definition}

Finite admissible curves are used to describe string complex in the derived category $\mathds{K}^{-,b}(\proj(A)) \simeq \Dcat^b(A)$ of a gentle algebra $A$, which are bounded in complex category.

An admissible curve is actually a pair $(c,\tc)$.
In many cases, we \defines{use $\tc$ to represent the admissible curve $(c,\tc)$},
which can represent both its corresponding curve $c$ and the grading $\tc$.
A more general definition of graded curve (and its grading) can be referred to \cite{HKK2017,QZZ2022}.
Moreover, for simplicity, we always assume that $c$ and $c^{-1}:=c(1-t)$
with $t\in [0,1]$ are the same permissible/admissible curves,
and we use the following notations which are given by Qiu, Zhang and Zhou in \cite{QZZ2022}.
\begin{itemize}
  \item $\PC_{\m}(\SURF)$: the set of all permissible curves with endpoints lying in $\M\cup\E$;
  \item $\PC_{\oslash}(\SURF)$: the set of all permissible curves without endpoints (up to homotopy);
  \item $\AC_{\m}(\SURF):=\AC^{\m}_{\m}(\SURF)\cup\AC^{\oslash}_{\m}(\SURF)$, where
  \begin{itemize}
    \item $\AC^{\m}_{\m}(\SURF)$: the set of all admissible curves with endpoints lying in $\M$;
    \item $\AC^{\oslash}_{\m}(\SURF)$: the set of all admissible curves with only one endpoint and lies in $\M$;
  \end{itemize}
  \item $\AC^{\oslash}_{\oslash}(\SURF)$: the set of all admissible curves without endpoints (up to homotopy);
  \item $\AC(\SURF) := \AC_{\m}(\SURF) \cup \AC^{\oslash}_{\oslash}(\SURF)$.
\end{itemize}

Let $\SURF = (\Surf, \M, \Y, \Dblue, \Dred)$ be a marked ribbon surface of a gentle algebra $A=\kk\Q/\I$.
The following theorem shows that all indecomposable objects in $\modcat A$ and $\per A$ can be described by permissible curves and admissible curves, respectively.

\begin{theorem} \label{thm:OPS and BCS corresponding}
Let $\mathscr{J}$ be the set of all Jordan blocks with non-zero eigenvalue.
Then there are two bijections:
\begin{itemize}
  \item[\rm(1)] {\rm \cite[Theorems 3.8 and 3.9]{BCS2021}}
    $\MM: \PC_{\m}(\SURF) \cup (\PC_{\oslash}(\SURF)\times\mathscr{J}) \to \ind(\modcat(A))$;
  \item[\rm(2)]{\rm \cite[Theorem 2.13]{OPS2018}}
    $\X: \AC_{\m}(\SURF) \cup (\AC^{\oslash}_{\oslash}(\SURF)\cap\{\text{finite admissible}~\gbullet\text{-arcs}\} \times \mathscr{J}) \to \ind(\per(A))$.
\end{itemize}
\end{theorem}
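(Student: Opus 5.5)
The two bijections are cited from \cite{BCS2021} and \cite{OPS2018}. The plan is therefore to reduce each part to the combinatorial classification of indecomposables over a gentle algebra, and then to match that combinatorics with the curve data on $\SURF$ fixed in Construction \ref{construction}.

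For (1), I start from the Butler--Ringel classification. The indecomposable $A$-modules are the string modules $M(w)$, one for each string $w$ up to inversion, including the trivial strings $e_x$. The remaining indecomposables are the band modules $M(b,J)$, with $b$ a band up to rotation and inversion and $J\in\mathscr{J}$.

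To each permissible curve $c\in\PC_{\m}(\SURF)$ I attach the sequence of $\Dblue$-arcs it crosses, $x_1,\dots,x_{m}$, read off from the arc segments $c_{(i,i+1)}$. Consecutive crossings $x_i,x_{i+1}$ are edges of a common elementary $\gbullet$-polygon. By Step 2 of Construction \ref{construction} they are therefore joined by an arrow, or by a composition of arrows around a marked point $p$, pointing in one of the two directions.

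The key local check is that a segment of Case $\gbullet$-B, passing between two consecutive edges at $p$, gives a single arrow or inverse arrow. Condition (1.2), together with the relation pattern of \Pic \ref{fig:surf-relation}, then guarantees that the resulting word contains no subword $\alpha\alpha^{-1}$ and no relation from $\I$. Hence the word is a string. The end segments (Cases $\gbullet$-A, C, D) ensure that the word is maximal in the correct sense, and they produce the trivial strings when $c$ crosses no arc and has endpoints in $\M\cap\E$.

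Conversely, every string is realized by concatenating the corresponding segments, and minimal-position uniqueness of homotopy classes gives injectivity, using the identification $c\sim c^{-1}$. Closed curves in $\PC_{\oslash}(\SURF)$ correspond in the same way to cyclic words, that is, bands. I then set $\MM(c):=M(w_c)$ and $\MM(c,J):=M(b_c,J)$.

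For (2), I use Bekkert--Merklen: indecomposable objects of $\mathsf{K}^{-,b}(\proj A)$ are string complexes given by generalized strings (homotopy strings), together with band complexes for generalized bands with $J\in\mathscr{J}$. This uses $\per(A)\simeq \mathsf{K}^b(\proj A)$; in infinite global dimension the infinite strings account for the one-sided and two-sided infinite admissible curves. An admissible curve crosses $\Dred$-arcs, which by Step 1 of Construction \ref{construction} are identified with vertices, i.e.\ with the indecomposable projectives $P_x$.

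Inside an elementary $\rbullet$-polygon, a segment of Case $\rbullet$-A or $\rbullet$-B between edges $a^1_{\rbullet}, a^2_{\rbullet}$ gives a path $P_{x_2}\to P_{x_1}$. This path is the composition of the arrows around the relevant $\rbullet$-marked point; it is nonzero because the polygon has a unique boundary edge, and it is a maximal non-relation path. The grading rule (2a.3) records exactly the cohomological degree shift: the degree goes up or down according to whether the boundary marked point $p_i$ lies to the left or right, i.e.\ whether the path is a differential in the forward or backward direction. This yields a graded generalized string. Conditions (2b.3) and (2c.3) single out the winding direction for which the infinite string is eventually periodic in the admissible way, matching the infinite projective resolutions. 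Inverting this construction is again done segment by segment.

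The main obstacle is this last part: checking that the left/right grading convention and the winding conditions agree precisely with Bekkert--Merklen's homotopy letters and the boundedness of cohomology. Since the precise statements are those of \cite[Theorem 2.13]{OPS2018} and \cite[Theorems 3.8, 3.9]{BCS2021}, I would ultimately invoke them after verifying that our Construction \ref{construction} and Definition \ref{def:perm and adm curve} coincide with their conventions.
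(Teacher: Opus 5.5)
The paper gives no argument for this statement beyond quoting \cite{BCS2021} and \cite{OPS2018}. So your closing move, invoking those theorems after matching conventions, is the paper's route, and your Butler--Ringel dictionary for (1) is the right picture. The step that fails is your bridge for (2). You say the argument uses $\per(A)\simeq\mathsf{K}^b(\proj A)$, and also that in infinite global dimension the infinite homotopy strings account for the one- and two-sided infinite admissible curves. These two claims are incompatible.

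An infinite admissible curve has infinitely many $\Dred$-crossings, so its homotopy string is infinite. As you note yourself, it is the periodic tail of an infinite projective resolution along a cycle with full relations, i.e.\ it winds inside an $\infty$-elementary $\gbullet$-polygon. The associated minimal complex therefore has infinitely many nonzero terms. It lies in $\mathsf{K}^{-,b}(\proj A)\simeq\Dcat^b(A)$ but not in $\mathsf{K}^b(\proj A)=\per(A)$. For example, for $A=\kk[x]/(x^2)$ the minimal projective resolution of the simple module is given by a curve in $\AC^{\oslash}_{\m}(\SURF)$, and it is not perfect.

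Hence no matching of conventions can send $\AC^{\oslash}_{\m}(\SURF)$ into $\ind(\per(A))$, and your two-sided infinite curves are not in the stated domain at all. What the Bekkert--Merklen/Opper--Plamondon--Schroll dictionary actually gives is the following. Finite curves in $\AC^{\m}_{\m}(\SURF)$, together with gradable closed curves decorated by $J\in\mathscr{J}$, correspond to $\ind(\per(A))$. Adding all one- and two-sided infinite admissible curves gives $\ind(\Dcat^b(A))$. The two readings agree exactly when $\gldim A<\infty$ (then $\AC^{\oslash}_{\m}(\SURF)=\emptyset$ and $\per(A)=\Dcat^b(A)$), which is the setting of Section~\ref{sect:derivedcompos}. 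Otherwise you must prove one of these corrected versions, since (2) as literally written fails when $\gldim A=\infty$.

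Some local identifications in your sketch also need fixing before they match the cited conventions.
\begin{itemize}
\item A trivial string $e_x$ corresponds to a permissible curve crossing exactly one $\Dblue$-arc, giving the simple module $S_x$. A curve crossing no arc is homotopic to a boundary segment and gives the zero module, so the trivial permissible curves are not the trivial strings.
\item In a $\rbullet$-polygon, the homotopy letter attached to a segment between two $\Dred$-edges is the composite of the arrows at the consecutive corners of that polygon lying between the two edges. Equivalently, it is the path around the $\gbullet$-point $p_i$ on its boundary edge. It is in general not a maximal path. A composite of consecutive arrows around a single $\rbullet$-point is precisely a relation from Step~3 of Construction~\ref{construction}, so it cannot be the nonzero letter.
\item Case $\rbullet$-A is the end segment leaving a $\gbullet$-point, not a segment between two edges.
\end{itemize}
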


\subsection{Semi-orthogonal decompositions}

Let $\calT$ be a triangulated category and $\shift{1}{}$ the shift defined on $\calT$.

\begin{definition}\rm
Suppose that $\calC$ and $\calD$ be two full triangulated subcategories of $\calT$.
We call that the pair $\langle\calC, \calD \rangle$ is a \defines{semi-orthogonal decomposition} of $\calT$
if the following conditions hold:
\begin{enumerate}[label=\textrm{(SOD\arabic*)}]
  \item $\Hom_{\calT}(\calC, \calD)=0$; \label{def-OD1}
  \item $\calT = \calC * \calD := \{ Z \in \calT :$ there is a distinguished triangle $C \to Z \to D \to \shift{1}{C}$
  with $C\in\calC$ and $D\in \calD$$\}$. \label{def-OD2}
\end{enumerate}
\end{definition}

Consider a gentle algebra $A$ and its marked ribbon surface $\SURF=(\Surf, \M,\Y,\Dblue,\Dred)$.
Let $\tc_1$ and $\tc_2$ be two admissible curve lying in $\AC_{\m}(\SURF)$ such that
$\emptyset \ne c_1\cap c_2 \in \bSurf$. Then for any $\gbullet$-marked point $p\in c_1\cap c_2$,
we have \[\Hom_{\Dcat^b(A)}(\X(c_1), \shift{t}{\X(c_2)}) \ne 0\]
for some $t\in\ZZ$ if $c_1$ is left to $c_2$ at the $\gbullet$-marked point $p$.
In this case, we have $t = \tc_2(0)-\tc_1(0)$ (without loss of generality, assume $c_1(0)=c_2(0)=p$),
and there exists a morphism $h: \X(c_1) \to \shift{t}{\X(c_2)}$ in this Hom-space corresponding to $p$.
Therefore, if the bounded derived category $\Dcat^b(A)$ of a gentle algebra $A$ has a semi-orthogonal decomposition
$\Dcat^b(A) = \langle \calC, \calD \rangle_{\Dcat^b(A)}$, then the following two facts hold:
\begin{enumerate}[label=\textrm{(D\arabic*)}]
  \item if $\X(\tc_1)\in \calC$, then $\X(\tc_2)\notin \calD$; \label{D1}
  \item if $\X(\tc_2)\in \calD$, then $\X(\tc_1)\notin \calC$. \label{D2}
\end{enumerate}
The mapping cone of $h$ induces a distinguished triangle
\[ \X(c_1) \to \shift{t}{\X(c_2)} \to Z \to \shift{1}{\X(c_1)} \]
in $\Dcat^b(A)$. Furthermore, by using \cite[Theorem 3.3]{OPS2018}, we have
\[ Z \cong \X(\tc), ~\text{for some}~ \tc\in\AC_{\m}(\SURF), \]
where $\tc$ is shown in \Pic \ref{fig:mapping cone}.
\begin{figure}[H]
  \centering
\begin{tikzpicture}
\draw[violet][line width=1pt] (-2,2)--(0,0)--(2,2);
\draw[violet][line width=1pt] (-2,2.1) to[out=-45,in=225] (2,2.1);
\fill[blue] (0,0) circle (3pt) (2,2) circle (3pt) (-2,2) circle (3pt);
\draw[black] (0,0) node[below]{$p$};
\draw (-1,1) node[left]{$\tc_1$} (1,1) node[right]{$\tc_2$} (0,1.3) node[above]{$\tc$};
\end{tikzpicture}
  \caption{The mapping cone of $\X(\tc_1)\to\X(\tc_2)$}
  \label{fig:mapping cone}
\end{figure}
It follows that the following fact:
\begin{enumerate}[label=\textrm{(D3)}]
  \item there is a $\gbullet$-FFAS $\Gamma=\{\tc_i: i\in I\}$ ($I$ is an index set) such that $\X(\Gamma)\subseteq \calC\cup\calD$. \label{D3}
\end{enumerate}
Kop\v{r}iva and \v{S}\v{t}ov\'{i}\v{c}ek show that each
semi-orthogonal decomposition of the bounded derived category $\Dcat^b(A)$
of a gentle algebra $A$ corresponds to a good cut (see Definition \ref{def:good cut} as follows)
of the marked ribbon surface $\SURF$ of $A$
by using the facts \ref{D1}, \ref{D2}, and \ref{D3}, see \cite[Theorems 3.16, 3.17]{KS2022}.

\begin{definition}[\!{\cite[Definition 3.12]{KS2022}}]
\label{def:good cut}\rm
Let $\SURF=(\Surf,\M,\Y,\Dblue,\Dred)$ be a marked ribbon surface and $\Omega$ be a set of some curves.
We call $\Omega$ a \defines{good cut} if it satisfies the following conditions.
\begin{enumerate}[label=\textrm{(GC\arabic*)}]
  \item Each curve $\omega$ in $\Omega$ is a \defines{dividing curve},
    i.e., $\omega$ is a curve from an endpoint lying in $\M$ to another endpoint lying in $\Y$,
    and, fixing the endpoints, $\omega$ not homotopy to a subset without marked point of a boundary component of $\bSurf$,
    see \Pic \ref{fig:div curve}; \label{GC1}
\begin{itemize}
  \item where the endpoint $r_{\omega}\in\Y$ of $\omega$
    split to two $\rbullet$-marked points $r_{\omega}^{\Left}$, say \defines{left added $\rbullet$-marked points},
    and $r_{\omega}^{\Right}$, say \defines{right added $\rbullet$-marked points}, by $\omega$ cutting $\Surf$,
  \item and the endpoint $b_{\omega}\in\M$ of $\omega$
    split to two $\gbullet$-marked points $b_{\omega}^{\Left}$, say \defines{left added $\gbullet$-marked points},
    and $b_{\omega}^{\Right}$, say \defines{right added $\gbullet$-marked points}, by $\omega$ cutting $\Surf$.
\end{itemize}
\begin{figure}[H]
  \centering
\begin{tikzpicture}
\draw[cyan] ( 0.00,-1.00) -- ( 0.00, 1.00);
\draw[cyan] ( 0.00,-1.00) -- ( 0.00, 0.00) [->];
\draw[cyan] ( 0.00, 0.00) node[left]{$\omega$};
\fill[line width=1pt][black!25]
  (-1.50, 1.00) -- ( 1.50, 1.00) -- ( 1.50, 1.20) -- (-1.50, 1.20);
\fill[line width=1pt][black!25]
  (-1.50,-1.00) -- ( 1.50,-1.00) -- ( 1.50,-1.20) -- (-1.50,-1.20);
\draw[line width=1pt][black]
  (-1.50, 1.00) -- ( 1.50, 1.00);
\draw[line width=1pt][black]
  (-1.50,-1.00) -- ( 1.50,-1.00);
\fill[ blue] ( 0.00,-1.00) circle(2.0pt);
\fill[ red ] ( 1.00,-1.00) circle(2.0pt);
\fill[white] ( 1.00,-1.00) circle(1.4pt);
\fill[ red ] (-1.00,-1.00) circle(2.0pt);
\fill[white] (-1.00,-1.00) circle(1.4pt);
\fill[ red ] ( 0.00, 1.00) circle(2.0pt);
\fill[white] ( 0.00, 1.00) circle(1.4pt);
\fill[ blue] (-1.00, 1.00) circle(2.0pt);
\fill[ blue] ( 1.00, 1.00) circle(2.0pt);
\draw ( 0.00, 1.00) node[above]{$r_{\omega}$};
\draw ( 0.00,-1.00) node[below]{$b_{\omega}$};
\end{tikzpicture}
\ \
\begin{tikzpicture}
\draw[white] (0,-1.45)--(0,1.45);
\draw[->][line width=0.7pt] (-0.5,0)--(0.5,0);
\end{tikzpicture}
\ \
\begin{tikzpicture}
\fill[black!25]
  (-1.30, 1.20) -- ( 3.30, 1.20) -- ( 3.30,-1.20) -- (-1.30,-1.20);
\fill[white]
  (-1.35, 1.00) -- (-0.50, 1.00) arc(90:-90:1) -- (-1.35,-1.00);
\draw[line width=1pt][black]
  (-1.30, 1.00) -- (-0.50, 1.00);
\draw[line width=1pt][black]
  (-0.50,-1.00) -- (-1.30,-1.00);
\draw[cyan]
  (-0.50,-1.00) arc(-90:90:1);
\draw[cyan]
  (-0.50,-1.00) arc(-90: 0:1) [->];
\fill[ red ] (-0.50, 1.00) circle(2.0pt);
\fill[white] (-0.50, 1.00) circle(1.4pt);
\fill[ blue] (-1.00, 1.00) circle(2.0pt);
\fill[ blue] (-0.50,-1.00) circle(2.0pt);
\fill[ red ] (-1.00,-1.00) circle(2.0pt);
\fill[white] (-1.00,-1.00) circle(1.4pt);
\draw (-0.50, 1.00) node[above]{$r_{\omega}^{\Left}$};
\draw (-0.50,-1.00) node[below]{$b_{\omega}^{\Left}$};
\fill[white][shift={(2,0)}]
  ( 1.35, 1.00) -- ( 0.50, 1.00) arc(90:270:1) -- ( 1.35,-1.00);
\draw[line width=1pt][black][shift={(2,0)}]
  ( 1.30, 1.00) -- ( 0.50, 1.00);
\draw[line width=1pt][black][shift={(2,0)}]
  ( 0.50,-1.00) -- ( 1.30,-1.00);
\draw[cyan][shift={(2,0)}]
  ( 0.50,-1.00) arc(-90:-270:1);
\draw[cyan][shift={(2,0)}][->]
  ( 0.50,-1.00) arc(-90:-180:1);
\fill[ red ][shift={(2,0)}] ( 0.50, 1.00) circle(2.0pt);
\fill[white][shift={(2,0)}] ( 0.50, 1.00) circle(1.4pt);
\fill[ blue][shift={(2,0)}] ( 1.00, 1.00) circle(2.0pt);
\fill[ blue][shift={(2,0)}] ( 0.50,-1.00) circle(2.0pt);
\fill[ red ][shift={(2,0)}] ( 1.00,-1.00) circle(2.0pt);
\fill[white][shift={(2,0)}] ( 1.00,-1.00) circle(1.4pt);
\draw[shift={(2,0)}] ( 0.50, 1.00) node[above]{$r_{\omega}^{\Right}$};
\draw[shift={(2,0)}] ( 0.50,-1.00) node[below]{$b_{\omega}^{\Right}$};
\end{tikzpicture}
  \caption{Dividing curve}
  \label{fig:div curve}
\end{figure}

  \item Any two dividing curves neither intersect nor meet at endpoints. \label{GC2}
  \item Let $\SURF_{\Omega}$ be the cut surface obtain $\Omega$ cutting $\SURF$.
    No connected component of $\SURF_{\Omega}$ contains an left added marked point and an right added marked point. \label{GC3}
  \item No connected component of $\SURF_{\Omega}$ is trivial, i.e.,
    homeomorphic to an open disk with only two marked points,
    one $\gbullet$-marked point and one $\rbullet$-marked point, on its boundary. \label{GC4}
\end{enumerate}
\end{definition}

\begin{theorem}[{Kop\v{r}iva--\v{S}\v{t}ov\'{i}\v{c}ek, \cite[Theorems 3.15 and 3.16]{KS2022}}] \label{thm:KS2022}
Let $A$ be a gentle algebra.
\begin{enumerate}[label=\textrm{\rm(\arabic*)}]
  \item There exists a one-to-one correspondence between
    the semi-orthogonal decompositions of the perfect derived category $\per(A)$
    to a good cut of the marked ribbon surface $\SURF$ of $A$.
    \label{KSthm(1)}
  \item The correspondence in \ref{KSthm(1)} can be extent to a one-to-one correspondence between
    the semi-orthogonal decompositions of the bounded derived category $\Dcat^b(A)$
    to a good cut of $\SURF$.
    \label{KSthm(2)}
\end{enumerate}
\end{theorem}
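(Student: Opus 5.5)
The plan is to prove both directions of the correspondence in \ref{KSthm(1)} directly from the geometric model, using the facts \ref{D1}, \ref{D2}, \ref{D3}. Then I would deduce \ref{KSthm(2)} by a formal argument that passes between $\per(A)$ and $\Dcat^b(A)$.

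\emph{From semi-orthogonal decompositions to good cuts.} Let $\per(A)=\langle\calC,\calD\rangle$.
\begin{enumerate}[label=\textrm{(\roman*)}]
  \item By \ref{D3}, fix a $\gbullet$-FFAS $\Gamma$ with $\X(\Gamma)\subseteq\calC\cup\calD$. Write $\Gamma=\Gamma_{\calC}\sqcup\Gamma_{\calD}$ according to which subcategory each object lies in.
  \item Look at a $\gbullet$-marked point $p$ and the arcs of $\Gamma$ ending at $p$, ordered by the ``left to'' relation at $p$. By \ref{D1} and \ref{D2}, no arc of $\Gamma_{\calC}$ is left to an arc of $\Gamma_{\calD}$ at $p$, in any grading. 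Hence the fan at $p$ consists of a block of $\calD$-arcs followed by a block of $\calC$-arcs, with at most one switch.
  \item At each switch, draw a dividing curve $\omega$ from $p$ into the elementary polygon lying between the two blocks, and end it at the unique $\rbullet$-point on that polygon's boundary edge.
  \item Take $\Omega$ to be the set of all such $\omega$ and verify \ref{GC1}--\ref{GC4}. Condition \ref{GC1} is by construction. For \ref{GC2}, two of these curves cannot cross, since each stays inside a single elementary polygon and each polygon has one boundary edge. For \ref{GC3}, each component of $\SURF_{\Omega}$ is swept out by arcs of only one of $\Gamma_{\calC}$ or $\Gamma_{\calD}$, so all its added points lie on the same side. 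Condition \ref{GC4} follows from non-triviality of the component, since an empty block would correspond to a zero subcategory.
\end{enumerate}
I would also show that $\Omega$ does not depend on the choice of $\Gamma$. Any two $\gbullet$-FFAS are related by flips, and by \ref{D1}--\ref{D3} each flip keeps the new arc in the same class as its neighbours inside one component; the mapping-cone triangle of \Pic\ref{fig:mapping cone} is what controls this.

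\emph{From good cuts to semi-orthogonal decompositions.} Given a good cut $\Omega$, condition \ref{GC3} sorts the components of $\SURF_{\Omega}$ into ``left'' and ``right'' components.
\begin{enumerate}[label=\textrm{(\roman*)}]
  \item Choose a $\gbullet$-FFAS of $\SURF$ all of whose arcs avoid $\Omega$. It splits as $\Gamma_{\Left}\sqcup\Gamma_{\Right}$.
  \item Put $\calC:=\langle\X(\Gamma_{\Right})\rangle$ and $\calD:=\langle\X(\Gamma_{\Left})\rangle$, with the sides fixed so that the orientation matches the one forced in the first direction.
  \item Generation \ref{def-OD2} holds because a $\gbullet$-FFAS generates $\per(A)$ (Remark \ref{rmk:FFAS}). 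Combining this with Hom-vanishing, the standard argument shows $\calC*\calD$ is triangulated, so $\calC*\calD=\per(A)$.
  \item For Hom-vanishing \ref{def-OD1}, morphisms between $\X$ of arcs in different components arise only from shared $\gbullet$-endpoints. The only shared endpoints are split points $b_\omega$, and there the orientation from \ref{GC1} gives only morphisms from $\calD$-arcs to $\calC$-arcs.
\end{enumerate}
The two constructions are mutually inverse. Starting from a decomposition, the arcs of $\Gamma_{\calC}$ already generate $\calC$. Starting from a cut, the switches of the resulting decomposition recover exactly the curves of $\Omega$.

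\emph{Extension to $\Dcat^b(A)$.} For \ref{KSthm(2)}, I would use that $\Dcat^b(A)$ is the category of homologically finite objects over $\per(A)$. Every semi-orthogonal decomposition of $\per(A)$ then extends uniquely, with components the homologically finite objects over each piece. Conversely, intersecting with $\per(A)$ recovers the original decomposition, using that the projection functors preserve perfect objects. Geometrically, infinite admissible curves wind around unmarked boundary components, and each such component lies entirely in one component of $\SURF_{\Omega}$.

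I expect the main obstacle to be the well-definedness and injectivity of the first construction. This means checking that the switches in the fans are independent of $\Gamma$ and that \ref{GC3} really holds, which requires tracking how \ref{D1} and \ref{D2} interact with the gradings through flips.
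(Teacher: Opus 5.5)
The paper does not prove this statement; it cites \cite{KS2022} and only records \ref{D1}--\ref{D3} as the facts used there. Your proposal therefore has to stand on its own. As written, it has a genuine gap exactly at the step you flag, and the mechanism you propose for that step does not work.

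\textbf{Independence of $\Gamma$.} Write $\Psi$ for the map from good cuts to decompositions and $\Phi_\Gamma$ for your switch construction.

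Granting that $\Phi_\Gamma$ yields a good cut, your argument gives $\Psi(\Phi_\Gamma\langle\calC,\calD\rangle)=\langle\calC,\calD\rangle$. To see this, compute $\Psi$ with $\Gamma$ itself, which avoids its own switch curves. You also need, and did not state, that $\Psi(\Omega)$ does not depend on the FFAS avoiding $\Omega$; this follows from $\calC={}^{\perp}\calD$, $\calD=\calC^{\perp}$ and the intersection description of morphisms. Hence $\Psi$ is surjective, and injectivity of $\Psi$ is equivalent to $\Phi_\Gamma$ not depending on $\Gamma$.

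Flips cannot give this, because a flip need not preserve $\X(\Gamma)\subseteq\calC\cup\calD$. Take the path algebra of $1\to 2$, whose surface is a disk with three $\gbullet$- and three $\rbullet$-points. Consider the decomposition generated by two arcs $\gamma_1,\gamma_2$ with $\Hom_{\per(A)}(\X(\gamma_1),\X(\gamma_2)[t])\neq 0$. Flipping either arc produces the third arc, whose object is the cone of this morphism and lies in neither component. Restricting to flips ``inside one component'' presupposes the very cut whose well-definedness is at stake.

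The missing ingredient is a lemma of the following form: if $\Gamma$ is a $\gbullet$-FFAS avoiding a good cut $\Omega$ and an arc $\gamma$ essentially crosses some $\omega\in\Omega$, then $\X(\gamma)\notin\calC\cup\calD$. This can be proved in the model.
\begin{enumerate}
  \item Put $\gamma$ in minimal position with respect to $\Gamma\cup\Omega$, and look at the segment of $\gamma$ crossing $\omega$ inside the polygon $Q\supset\omega$.
  \item On each side of $\omega$ this segment either crosses an edge of $Q$ or ends at a corner of $Q$.
  \item The interior crossings give morphisms in both directions, and the corner orientations give the rest. Together they yield nonzero maps (up to shift) from $\X(\gamma)$ to an arc on the $\calD$-side, and from an arc on the $\calC$-side to $\X(\gamma)$.
  \item Hence $\X(\gamma)\notin{}^{\perp}\calD=\calC$ and $\X(\gamma)\notin\calC^{\perp}=\calD$.
\end{enumerate}
It follows that every $\Gamma'$ as in \ref{D3} avoids $\Omega$, and its switches are precisely the curves of $\Omega$. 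Here \ref{GC4} is what guarantees that both $b_\omega^{\Left}$ and $b_\omega^{\Right}$ carry arcs of $\Gamma'$.

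\textbf{Verification of \ref{GC2}--\ref{GC4}.} Your argument skips the key point here. The fan argument at a single $\gbullet$-point does not exclude switches at two different corners of one elementary polygon. In that case two of your curves end at the same $\rbullet$-point, and \ref{GC2} fails; ``each polygon has one boundary edge'' is the reason this could happen, not the reason it cannot.

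What is needed is that all corner morphisms of a polygon point the same rotational way. Then along its edges $e_1,\dots,e_k$, read from one end of the boundary edge to the other, the classes are monotone: first $\calD$, then $\calC$. So each polygon has at most one switch, and an $\infty$-elementary polygon, whose chain of edges is cyclic, has none.

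The same monotonicity makes every piece of a cut polygon single-coloured, hence every component of $\SURF_\Omega$ single-coloured. Together with the $\calD$-side always lying on the same side of $\omega$, this gives \ref{GC3}. For \ref{GC4}, the right reason is that each piece keeps at least one edge, and an arc of an FFAS cannot lie in a disk with a single $\gbullet$-point.

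This whole direction is also a reduction to \ref{D3}. That fact carries most of the weight and does not follow from the mapping-cone picture.

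\textbf{Part (2).} This part is not formal. Extend the decomposition to $\Dcat(\mathrm{Mod}\,A)$ with triangles $C_X\to X\to D_X\to C_X[1]$. Homological finiteness of $X\in\Dcat^b(A)$ controls $\Hom(c,C_X[n])=\Hom(c,X[n])$ for $c\in\calC$. It says nothing about $\Hom(D_A,C_X[n])$, so $C_X$ and $D_X$ need not have bounded, finite-dimensional cohomology. Conversely, when $\gldim A=\infty$ nothing general forces the projection functors of a decomposition of $\Dcat^b(A)$ to preserve $\per(A)$.

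For gentle algebras both facts have to be derived from the geometry, for instance by establishing \ref{D1}--\ref{D3} in $\Dcat^b(A)$ with infinite admissible curves included. Your remark about unmarked boundary components (by \ref{GC3}, no dividing curve ends on one) is a starting point for this, but not yet an argument.
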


Note that, due to our convention, the notation for semi-orthogonal decompositions used in this paper is opposite to that in \cite{KS2022}.

\subsection{Derived decompositions}

Let $\calA$ be an Abelian category.

\begin{definition} \rm
A \defines{derived decomposition} of an Abelian category $\calA$ is a pair $(\calC, \calD)$ of full Abelian subcategories,
say $[\calC, \calD]$, such that the following conditions hold:
\begin{enumerate}[label=\textrm{(DD\arabic*)}]
  \item the functor $\Dcat^b(\emb_1): \Dcat^b(\calC)\to\Dcat^b(\calA)$ induced by the canonical embedding $\emb_1: \calC \to \calA$ is fully faithful, and the functor $\Dcat^b(\emb_2): \Dcat^b(\calD)\to\Dcat^b(\calA)$ induced by the canonical embedding $\emb_2: \calD \to \calA$ is fully faithful; \label{def-DD1}
  \item $\langle \Ima(\Dcat^b(\emb_1)), \Ima(\Dcat^b(\emb_2)) \rangle$ is a semi-orthogonal decomposition of $\Dcat^b(\calA)$. \label{def-DD2}
\end{enumerate}
\end{definition}

For any gentle algebra $A$, Theorem \ref{thm:KS2022} provides the following one-to-one correspondence
\begin{align*}
 & \{ \text{good cuts of the marked ribbon surface~} \SURF \text{~of~} A\} \\
 \mathop{\longleftrightarrow}^{1-1}~ &
   \{ \text{semi-orthogonal decompositions of~} \Dcat^b(A) \} \\
 \mathop{\longleftrightarrow}^{1-1}~ &
   \{ \text{semi-orthogonal decompositions of~} \per(A) \},
\end{align*}
When $\calA$ is the module category $\modcat(A)$ of a gentle algebra $A$ with finite global dimension,
a pair $(\calC,\calD)$ of full Abelian subcategories of $\modcat(A)$
is a derived decomposition if and only if $\Dcat^b(\emb_1)$ and $\Dcat^b(\emb_2)$ are fully faithful,
and $\langle \Ima(\Dcat^b(\emb_1)), \Ima(\Dcat^b(\emb_2)) \rangle$ is a semi-orthogonal decomposition of $\Dcat^b(A)$.

\section{Rotations}
\label{sect:rota}

\subsection{The rotations of permissible curves}

Recall that a permissible curve $c$ is divided to some $\Dblue$-arc segments
\[c = c_{(0,1)}c_{(1,2)}\cdots c_{(m(c),m(c)+1)}, \]
and the endpoints $c(0)$ and $c(1)$ of $c$ lie in $\M\cup\E$.
Any $\Dblue$-arc segment $c_{(i,i+1)}$ is a curve in an elementary $\gbullet$-polygon,
we write this elementary $\gbullet$-polygon as $\PP_i$.
Obviously, if $c(0)$ (resp., $c(1)$) $\in\E$,
then $\PP_0$ (resp., $\PP_{m(c)}$) is a 2-gon which has two vertices lying in $\M$,
and there is a $\rbullet$-marked point lying in $\E$ on a boundary of it, see \Pic \ref{fig:endsegment}.

\begin{figure}[H]
  \centering
\begin{tikzpicture}[scale=1.3]
\fill[black!25](-1,1.2) arc(90:270:1.2);
\fill[ white]  (-1,1) arc(90:270:1);
\draw[ black][line width=1pt] (-1, 1) arc(90:270:1);
\draw[ blue ][line width=1pt] (-1,-1) -- (-1, 1);
\draw[ blue ][line width=1pt] ( 1,-1) -- ( 1, 1);
\draw[violet][line width=1pt] (-2, 0) -- ( 2, 0)[->];
\draw[ white][line width=2pt] (-.5,0) -- (.6, 0)[dotted];
\foreach \x in {1,-1}
\fill[blue ] (\x,1)circle(2pt) (\x,-1)circle(2pt);
\fill[ red ] (-2,0)circle(2pt);
\fill[white] (-2,0)circle(1.5pt);
\draw[black] (-1.5,-0.4) node{$\PP_0$};
\end{tikzpicture}
  \caption{The first segment of $c$}
  \label{fig:endsegment}
\end{figure}

\begin{definition}\label{def:rotat} \rm
Let $A$ be a gentle algebra and $\SURF$ be its marked ribbon surface.
For a permissible curve $c = c_{(0,1)}c_{(1,2)}\cdots c_{m(c),m(c)+1}: [0,1]\to\Surf$,
we define that its \defines{rotation} is a $\gbullet$-curve, $c^{\rota}$, obtained by the following steps.
\begin{itemize}
  \item[Step 1] \textbf{Rotating the first $\Dblue$-arc segment}.

    \hspace{8pt}
    If $\PP_0$, as a elementary $\gbullet$-polygon, is not a 2-gon as shown in \Pic \ref{fig:arc segment I} Case $\gbullet$-C
    (of course, we allow it to be an $\infty$-elementary $\gbullet$-polygon),
    then we move the endpoint $c(0)$ along the edges lying in $\Egreen(\PP_0)$ of $\PP_0$ in a clockwise direction as far as possible, see \Pic \ref{fig:rotating curve} (a), (b);

    \hspace{8pt}
    If $\PP_0$ is a 2-gon as shown in \Pic \ref{fig:arc segment I} Case $\gbullet$-C,
    then we move the endpoint $c(0)$ to the next $\gbullet$-marked point along the positive direction
    (we suppose that the positive direction of the boundary $\partial S$ of a surface/polygon $S$ is the following walking direction: walking along $\partial S$, the inner of $S$ is on the left).

\begin{figure}[H]
\centering
\definecolor{ffqqqq}{rgb}{1,0,0}
\definecolor{bluearc}{rgb}{0,0,1}
\begin{tikzpicture}[scale=0.75] \small
\draw[bluearc][line width=1.2pt] (1.73,1) -- (0, 2) -- (-1.73, 1);
\draw[ black ][line width=1.2pt] (-1.73, 1) -- (-1.73,-1);
\draw[bluearc][line width=1.2pt] (-1.73,-1) -- (0,-2) -- (1.73,-1);
\draw[bluearc][line width=1.2pt] (1.73,-1) -- (1.73,1) [dotted];
\fill [bluearc] ( 1.73, 1) circle (2.8pt);
\fill [bluearc] ( 0.00, 2) circle (2.8pt);
\fill [bluearc] (-1.73, 1) circle (2.8pt);
\fill [bluearc] (-1.73,-1) circle (2.8pt);
\fill [bluearc] ( 0.00,-2) circle (2.8pt);
\fill [bluearc] ( 1.73,-1) circle (2.8pt);
\fill [bluearc] ( 1.73, 1) circle (2.8pt);
\draw [red][line width=0.55pt] (-1.73,0) -- (-0.89, 1.45);
\draw [red][line width=0.55pt] (-1.73,0) -- ( 0.89, 1.45);
\draw [red][line width=0.55pt] (-1.73,0) -- ( 1.73, 0.00) [dotted];
\draw [red][line width=0.55pt] (-1.73,0) -- ( 0.89,-1.45);
\draw [red][line width=0.55pt] (-1.73,0) -- (-0.89,-1.45);
\fill [red] (-1.73,0) circle (2.8pt); \fill [white] (-1.73,0) circle (2pt);
\draw[orange][line width=1.2pt] (0, 2.7) node[left]{$c$} to[out=0, in=165] (1,2.6) [dotted];
\draw[orange][line width=1.2pt] (1, 2.6) to[out=-15, in=180] (1.73,-1);
\draw[violet][line width=1.2pt] (0, 2.3) node[left]{$c^{\rota}$} to[out=0, in=165] (1,2.2) [dotted];
\draw[violet][line width=1.2pt] (1, 2.2)
  to[out=-15, in= 80] ( 1,0) to[out=-110,in=  0] (0,-1) -- (-1.73,-1);
\draw (0,-3) node{(a) elementary $\gbullet$-polygon};
\end{tikzpicture}
\ \
\begin{tikzpicture}[scale=0.75] \small
\filldraw[black!20] (0,0) circle (0.3cm);
\draw[line width=1.2pt] (0,0) circle (0.3cm);
\draw[bluearc][line width=1.2pt] (1.73,1) -- (0, 2) -- (-1.73, 1);
\draw[bluearc][line width=1.2pt, dotted] (-1.73, 1) -- (-1.73,-1);
\draw[bluearc][line width=1.2pt] (-1.73,-1) -- (0,-2) -- (1.73,-1) -- (1.73,1);
\fill [bluearc] ( 1.73, 1) circle (2.8pt);
\fill [bluearc] ( 0.00, 2) circle (2.8pt);
\fill [bluearc] (-1.73, 1) circle (2.8pt);
\fill [bluearc] (-1.73,-1) circle (2.8pt);
\fill [bluearc] ( 0.00,-2) circle (2.8pt);
\fill [bluearc] ( 1.73,-1) circle (2.8pt);
\fill [bluearc] ( 1.73, 1) circle (2.8pt);
\draw [red][line width=0.55pt] (0,0.3) -- (-0.89, 1.45);
\draw [red][line width=0.55pt] (0,0.3) -- ( 0.89, 1.45);
\draw [red][line width=0.55pt] (0,0.3) to[out= 40,in=140] ( 1.73, 0.00);
\draw [red][line width=0.55pt] (0,0.3) to[out= 20,in= 70] ( 0.89,-1.45);
\draw [red][line width=0.55pt] (0,0.3) to[out=160,in=110] (-0.89,-1.45);
\draw [red][line width=0.55pt] (0,0.3) to[out=140,in= 40] (-1.73, 0.00) [dotted];
\fill [red] (0,0.3) circle (2.8pt); \fill [white] (0,0.3) circle (2pt);
\draw (0,0) node{$b$};
\draw[orange][line width=1.2pt] (0, 2.7) node[left]{$c$} to[out=0, in=165] (1,2.6) [dotted];
\draw[orange][line width=1.2pt] (1, 2.6) to[out=-15, in=180] (1.73,-1);
\draw[violet][line width=1.2pt] (0, 2.3) node[left]{$c^{\rota}$} to[out=0, in=165] (1,2.2) [dotted];
\draw[violet][line width=1.2pt] (1, 2.2)
  to[out=-15, in= 90] ( 1,0) to[out=-90,in=  0] (0,-1)
  to[out=180, in=-90] (-1,0) to[out= 90,in=180] (0, 1)
  to[out=  0, in= 90] ( 0.8,0) to[out=-90,in=  0] (0,-0.8)
  to[out=180, in=-90] (-0.8,0) to[out= 90,in=180] (0, 0.8)
  to[out=  0, in= 90] ( 0.6,0) to[out=-90,in=  0] (0,-0.6)
  to[out=180, in=-90] (-0.6,0) to[out= 90,in=180] (0, 0.6);
\draw[violet][line width=1.2pt][dotted] (0, 0.6)
  to[out=  0, in= 90] ( 0.45,0) to[out=-90,in=  0] (0,-0.45)
  to[out=180, in=-90] (-0.45,0) to[out= 90,in=180] (0, 0.45);
\draw (0,-3) node{(b) $\infty$-elementary $\gbullet$-polygon};
\end{tikzpicture}
\ \
\begin{tikzpicture}[scale=0.75] \small
\draw[bluearc] (1.73,1) -- (0, 2) -- (-1.73, 1);
\draw[ black ][line width=1.2pt] (-1.73, 1) -- (-1.73,-1);
\draw[bluearc] (-1.73,-1) -- (0,-2) [dotted];
\draw[bluearc][line width=1.2pt] (0,-2) to[out=90, in=150] (1.73,-1);
\draw[ black ][line width=1.2pt] (0,-2) -- (1.73,-1);
\draw[bluearc] (1.73,-1) -- (1.73,1) [dotted];
\fill [bluearc] ( 1.73, 1) circle (2.8pt);
\fill [bluearc] ( 0.00, 2) circle (2.8pt);
\fill [bluearc] (-1.73, 1) circle (2.8pt);
\fill [bluearc] (-1.73,-1) circle (2.8pt);
\fill [bluearc] ( 0.00,-2) circle (2.8pt);
\fill [bluearc] ( 1.73,-1) circle (2.8pt);
\fill [bluearc] ( 1.73, 1) circle (2.8pt);
\draw [red][line width=0.55pt] (-1.73,0) -- (-0.89, 1.45);
\draw [red][line width=0.55pt] (-1.73,0) -- ( 0.89, 1.45);
\draw [red][line width=0.55pt] (-1.73,0) -- ( 1.73, 0.00) [dotted];
\draw [red][line width=0.55pt] (-1.73,0) -- ( 0.89,-1.45);
\draw [red][line width=0.55pt] (-1.73,0) -- (-0.89,-1.45) [dotted];
\fill [red] (-1.73, 0   ) circle (2.8pt); \fill [white] (-1.73, 0   ) circle (2pt);
\fill [red] ( 0.89,-1.45) circle (2.8pt); \fill [white] ( 0.89,-1.45) circle (2pt);
\draw[orange][line width=1.2pt] (0, 2.7) node[left]{$c$} to[out=0, in=165] (1,2.6) [dotted];
\draw[orange][line width=1.2pt] (1, 2.6) to[out=-15, in=90] ( 0.89,-1.45);
\draw[violet][line width=1.2pt] (0, 2.3) node[left]{$c^{\rota}$} to[out=0, in=165] (1,2.2) [dotted];
\draw[violet][line width=1.2pt] (1, 2.2) to[out=-15,in=120] ( 1.73,-1.00);
\draw (0,-3) node{(c) 2-polygon};
\end{tikzpicture}
\caption{Rotating (the case for $\PP_i$ to be \Pic \ref{fig:arc segment I} Case $\gbullet$-D can be seen as a special case of (b))}
\label{fig:rotating curve}
\end{figure}
  \item[Step 2] \textbf{Rotating the last $\Dblue$-arc segment}.
  We move the endpoint $c(1)$ along the edges of $\PP_{m(c)}$ by using the method similar to Step 1.
\end{itemize}
\end{definition}

The rotation $c^{\rota}$ of $c$ is an admissible curve, and the following result shows the relationship of $\X(c^{\rota})$ and $\MM(c)$.

\begin{theorem}
[{\!\!\cite[Theorem 3.8]{ZhangLiu2024}, \cite[Theorem 2.16]{Chang2025}, \cite[Theorem 3.3]{LZZpre2023}}]
\label{thm:Chang}
Let $A$ be a gentle algebra and $\SURF$ be its marked ribbon surface. Then there is an embedding
\[(-)^{\rota}: \PC_{\m}(\SURF) \to \AC_{\m}(\SURF), ~
c\mapsto \tc^{\rota}\]
such that $\H^{0}(\X(\tc^{\rota}))$ is isomorphic to $\MM(c)$ and $\H^{i}(\X(\tc^{\rota}))=0$ for any $i\neq 0$.
\end{theorem}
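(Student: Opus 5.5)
The plan is to identify $\X(\tc^{\rota})$ with the minimal projective resolution of the string module $\MM(c)$, read off on the surface. Since $\MM(c)$ has a projective resolution in $\mathds{K}^{-,b}(\proj(A))\simeq\Dcat^b(A)$, and string complexes correspond to admissible curves by Theorem \ref{thm:OPS and BCS corresponding}(2), it suffices to show two things. First, the projective resolution of $\MM(c)$ is a string complex whose curve is $c^{\rota}$, with grading normalized so that $\min_i\tc_i=0$. Second, $c\mapsto \tc^{\rota}$ is injective.

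\textbf{Step 1: the projective cover.} Write $c=c_{(0,1)}\cdots c_{(m(c),m(c)+1)}$. The $\Dblue$-arcs crossed by $c$ are the vertices of the string of $\MM(c)$, and the turning directions at consecutive crossings determine whether each letter is direct or inverse. The tops of $\MM(c)$ are the local "peaks" of the string. Each indecomposable projective $P(v)$ corresponds to the $\rbullet$-arc dual to $v$, and its string is the union of the two maximal paths starting at $v$. On the surface these two paths are obtained by rotating around the two $\gbullet$-endpoints of $v$ inside the adjacent elementary $\gbullet$-polygons. Using Construction \ref{construction}, I would verify that the generic part of $c$ (the segments of types $\gbullet$-A and $\gbullet$-B) yields, after straightening with respect to $\Dred$, the $\Dred$-arc segments giving the degree-$0$ component $\bigoplus P(\text{peaks})$ with its map to $\MM(c)$.

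\textbf{Step 2: the syzygies at the ends.} The kernel of $\bigoplus P(\text{peaks})\to\MM(c)$ consists of the parts of the projective strings that extend beyond the ends of the string of $c$. By gentleness (G2)--(G3), these are uniserial pieces coming only from the two end polygons $\PP_0$ and $\PP_{m(c)}$. Iterating the syzygy amounts to walking along the consecutive edges of $\Egreen(\PP_0)$ clockwise. This produces either:
\begin{itemize}
  \item a finite path ending at the boundary edge, as in \Pic \ref{fig:rotating curve}(a), giving a finite resolution, or
  \item an eventually periodic resolution in the $\infty$-polygon case, giving the infinite spiral in \Pic \ref{fig:rotating curve}(b).
\end{itemize}
The 2-gon case $\gbullet$-C, where $c(0)\in\E$, means that the end of the string is not followed by any arrow, so no syzygy appears; moving to the next marked point just records this. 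This is exactly Step 1 of Definition \ref{def:rotat}, and the same analysis applied to $c(1)$ gives Step 2. Checking the degrees with the grading rule (2a.3) shows that the syzygy terms sit in negative degrees while the middle of the curve has grading $0$. Hence the complex is $\X(\tc^{\rota})$, and its cohomology is $\MM(c)$ in degree $0$ and zero elsewhere.

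\textbf{Step 3: injectivity.} If $\tc_1^{\rota}=\tc_2^{\rota}$, then taking $\H^0$ gives $\MM(c_1)\cong\MM(c_2)$, so $c_1=c_2$ by Theorem \ref{thm:OPS and BCS corresponding}(1).

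The main obstacle is Step 2. It requires a careful case analysis of the end polygons, including degenerate polygons of type $\gbullet$-D, trivial permissible curves, and the infinite global dimension case where $\tc^{\rota}$ lies in $\AC^{\oslash}_{\m}$. Its key point is that the homotopy-minimal position of $c^{\rota}$ relative to $\Dred$ matches the minimal resolution. I would first handle a finite polygon, using the fact that consecutive edges at a common vertex give zero relations. I would then treat the $\infty$-polygon as the limit of this case, invoking the identification of infinite admissible curves with unbounded string complexes from \cite{OPS2018}.
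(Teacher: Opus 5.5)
Your overall route is the one used in the sources the paper cites: identify $\X(\tc^{\rota})$ with the minimal projective resolution of $\MM(c)$, and read the iterated syzygies off the end polygons. The paper itself gives no proof. However, the resolution you describe is wrong in the middle of the curve.

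In Step 2 you assert that the kernel of $\bigoplus_{\text{peaks}}P(v)\to\MM(c)$ consists only of the pieces of the projective strings beyond the two ends of the string. This misses the interior valleys. Let $u$ be a sink of the string lying between the peaks $v$ and $v'$. The element of $P(v)\oplus P(v')$ formed by the two paths down to $u$, with opposite signs, lies in the kernel. It generates a copy of the whole projective $P(u)$. Indeed, by (G2)--(G3) every arrow leaving $u$ continues, without a relation, exactly one of the two string arrows entering $u$. So the two continuations are precisely the two maximal paths starting at $u$.

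The paper's own Example \ref{examp:gentle260802-od} shows this. In the resolution $P(3)\oplus P(5)\to P(4)\oplus P(2)$ of ${^2}{_3}{^4}$, only $P(5)$ comes from an end, while $P(3)$ comes from the valley $3$. So the first syzygy is $\bigoplus_{\text{valleys}}P(u)$ plus at most two uniserial end modules, and only the latter are iterated. For the same reason your degree claim cannot hold. By (2a.3) the gradings at consecutive $\Dred$-crossings always differ by exactly $\pm1$, so the middle of the curve cannot ``have grading $0$'' once it crosses two $\Dred$-arcs.

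The missing geometric input is a local check at each interior crossing. Suppose $c$ crosses $w\in\Dblue$, with endpoints $p,q$. It arrives along a segment turning around $x\in\{p,q\}$ and leaves along one turning around $y\in\{p,q\}$. There are two cases.

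\begin{enumerate}
  \item If $x=y$, then near $w$ the curve stays in the elementary $\rbullet$-polygon containing $x$ and misses $w^{\bot}$. The string passes straight through $w$.
  \item If $x\neq y$, the curve crosses $w^{\bot}$ exactly once, and $w$ is a peak or a valley of the string.
\end{enumerate}

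Since $c^{\rota}$ agrees with $c$ away from its end segments, the middle of $\X(\tc^{\rota})$ is the two-term complex $\bigoplus_{\text{valleys}}P(u)\to\bigoplus_{\text{peaks}}P(v)$ with alternating gradings. Your end walks along $\Egreen(\PP_0)$ and $\Egreen(\PP_{m(c)})$ attach to this. With that correction, the rest of the plan, including injectivity, goes through.

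A smaller point: if both end polygons are $\infty$-elementary, then $c^{\rota}$ has no endpoint. It is then not in $\AC^{\oslash}_{\m}(\SURF)$, nor in $\AC_{\m}(\SURF)$ as the statement literally asserts. Whenever an end polygon is $\infty$-elementary, $\X(\tc^{\rota})\notin\per(A)$. So Theorem \ref{thm:OPS and BCS corresponding}(2) must be replaced throughout by the Opper--Plamondon--Schroll model of $\Dcat^b(A)$.
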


Next, we consider the inverse problem of Theorem \ref{thm:Chang},
i.e., for an admissible curve $\tvarsig$, find a curve $c$
such that $\varsigma\simeq c^{\rota}$ (if such $c$ exists)
and there are two grading $\tvarsig$ and $\tc^{\rota}$ satisfying
\begin{center}
  $\X(\tvarsig) \cong \X(\tc^{\rota})$.
\end{center}
To do this, we define the inverse rotation of $\tvarsig$ here.

\subsection{The inverse rotations of admissible curves}

Inverse rotation is a rotation applied to an admissible curve, aimed at restoring the indecomposable module corresponding to the projected complex.

\begin{definition}\label{def:inv rotat} \rm
Let $\varsigma:[0,1]\to \Surf$ {\rm(}with a grading $\tvarsig:\{\varsigma(t_k):1\=<k\=<n(\varsigma)\}\to\ZZ${\rm)} be an admissible curve.
Let $\varsigma_{\ast}$ be the first intersection in $\Surf\backslash\bSurf$ of $\tvarsig$ and $\Dblue$,
and $\varsigma^{\ast}$ be the last intersection in $\Surf\backslash\bSurf$ of $\tvarsig$ and $\Dblue$.
Assume that $\PP_{\ast}$ is the elementary $\gbullet$-polygon
such that the segment from $\varsigma(0)$ to $\varsigma_{\ast}$ of $\tvarsig$ lying in $\PP_{\ast}$,
and $\PP^{\ast}$ is the elementary $\gbullet$-polygon
such that the segment from $\varsigma^{\ast}$ to $\varsigma(1)$ of $\tvarsig$ lying in $\PP^{\ast}$.
Then we call $\PP_{\ast}$ is the \defines{starting $\gbullet$-polygon} of $\tvarsig$
and $\PP^{\ast}$ is the \defines{ending $\gbullet$-polygon} of $\tvarsig$.
Furthermore, we define that its \defines{inverse rotation} is a $\rbullet$-curve,
$c^{\antirota}$, obtained by the following steps.
\begin{itemize}
  \item[Step 1] \textbf{Rotating the first intersection}.
  All edges of $\PP_{\ast}$ are written as $a_{\gbullet}^{1}$, $a_{\gbullet}^{2}$, $\ldots$, $a_{\gbullet}^{u}$ in a clockwise direction.

    \hspace{8pt}
    If $\PP_{\ast}$ is neither a 2-gon nor an $\infty$-elementary $\gbullet$-polygon, see \Pic \ref{fig:antirota} (a), then we move the endpoint $\varsigma(0)$ along the edges lying in $\Egreen(\PP_*)$ of $\PP_*$ in a anticlockwise direction as far as possible,
    until $\varsigma(0)$ is moved to the first endpoint of $a_{\gbullet}^{t+1}$.
    Here, $a_{\gbullet}^{t+1}$ is the first $\Dblue$-arc crossed by $\tvarsig$.

    \hspace{8pt}
    If $\PP_{\ast}$ is a 2-gon as shown in \Pic \ref{fig:arc segment I} Case $\gbullet$-C,
    then we move the endpoint $\varsigma(0)$ to the previous $\rbullet$-marked point
    along the negative direction, see \Pic \ref{fig:antirota} (b).
\begin{figure}[H]
  \centering
\centering
\definecolor{ffqqqq}{rgb}{1,0,0}
\definecolor{bluearc}{rgb}{0,0,1}
\begin{tikzpicture}[scale=0.75] \small
\draw[bluearc][line width=1.2pt] ( 0.00, 2) -- (-1.73, 1) [dotted];
\draw[bluearc][line width=1.2pt] ( 1.73, 1) -- ( 0.00, 2);
\draw[ black ][line width=1.2pt] (-1.73, 1) -- (-1.73,-1);
\draw[bluearc][line width=1.2pt] (-1.73,-1) -- ( 0.00,-2);
\draw[bluearc][line width=1.2pt] ( 0.00,-2) -- ( 1.73,-1) [dotted];
\draw[bluearc][line width=1.2pt] ( 1.73,-1) -- ( 1.73, 1);
\draw(1,1.7) node{$a_{\gbullet}^t$};
\draw(2.2,0) node{$a_{\gbullet}^{t+1}$};
\fill [bluearc] ( 1.73, 1) circle (2.8pt);
\fill [bluearc] ( 0.00, 2) circle (2.8pt);
\fill [bluearc] (-1.73, 1) circle (2.8pt);
\fill [bluearc] (-1.73,-1) circle (2.8pt);
\fill [bluearc] ( 0.00,-2) circle (2.8pt);
\fill [bluearc] ( 1.73,-1) circle (2.8pt);
\fill [bluearc] ( 1.73, 1) circle (2.8pt);
\draw [red][line width=0.55pt] (-1.73,0) -- (-0.89, 1.45)[dotted];
\draw [red][line width=0.55pt] (-1.73,0) -- ( 0.89, 1.45);
\draw [red][line width=0.55pt] (-1.73,0) -- ( 1.73, 0.00);
\draw [red][line width=0.55pt] (-1.73,0) -- ( 0.89,-1.45)[dotted];
\draw [red][line width=0.55pt] (-1.73,0) -- (-0.89,-1.45);
\fill [red] (-1.73,0) circle (2.8pt); \fill [white] (-1.73,0) circle (2pt);
\draw[orange][line width=1.2pt] (0, 2.7) node[left]{$\tvarsig^{\antirota}$} to[out=0, in=165] (1,2.6) [dotted];
\draw[orange][line width=1.2pt] (1, 2.6) to[out=-15, in=180] (1.73,-1) [<-];
\draw[violet][line width=1.2pt] (0, 2.3) node[left]{$\tvarsig$} to[out=0, in=165] (1,2.2) [dotted];
\draw[violet][line width=1.2pt] (1, 2.2)
  to[out=-15, in= 80] ( 1,0) to[out=-110,in=  0] (0,-1) -- (-1.73,-1)[<-];
\draw[black] (-1.73*1.2,-1.00*1.2) -- (-0.00*1.2,-2.00*1.2) -- ( 1.73*1.2,-1.00*1.2) [->][dashed];
\draw (0,-3) node{(a)};
\end{tikzpicture}
\ \ \ \
\begin{tikzpicture}[scale=0.75] \small
\draw[bluearc][line width=1.2pt] (1.73,1) -- (0, 2) -- (-1.73, 1);
\draw[ black ][line width=1.2pt] (-1.73, 1) -- (-1.73,-1);
\draw[bluearc][line width=1.2pt] (-1.73,-1) -- (0,-2) [dotted];
\draw[bluearc][line width=1.2pt] (0,-2) to[out=90, in=150] (1.73,-1);
\draw[ black ][line width=1.2pt] (0,-2) -- (1.73,-1);
\draw[bluearc][line width=1.2pt] (1.73,-1) -- (1.73,1) [dotted];
\fill [bluearc] ( 1.73, 1) circle (2.8pt);
\fill [bluearc] ( 0.00, 2) circle (2.8pt);
\fill [bluearc] (-1.73, 1) circle (2.8pt);
\fill [bluearc] (-1.73,-1) circle (2.8pt);
\fill [bluearc] ( 0.00,-2) circle (2.8pt);
\fill [bluearc] ( 1.73,-1) circle (2.8pt);
\fill [bluearc] ( 1.73, 1) circle (2.8pt);
\draw [red][line width=0.55pt] (-1.73,0) -- (-0.89, 1.45);
\draw [red][line width=0.55pt] (-1.73,0) -- ( 0.89, 1.45);
\draw [red][line width=0.55pt] (-1.73,0) -- ( 1.73, 0.00) [dotted];
\draw [red][line width=0.55pt] (-1.73,0) -- ( 0.89,-1.45);
\draw [red][line width=0.55pt] (-1.73,0) -- (-0.89,-1.45) [dotted];
\fill [red] (-1.73, 0   ) circle (2.8pt); \fill [white] (-1.73, 0   ) circle (2pt);
\fill [red] ( 0.89,-1.45) circle (2.8pt); \fill [white] ( 0.89,-1.45) circle (2pt);
\draw[orange][line width=1.2pt] (0, 2.7) node[left]{$\tvarsig^{\antirota}$} to[out=0, in=165] (1,2.6) [dotted];
\draw[orange][line width=1.2pt] (1, 2.6) to[out=-15, in=90] ( 0.89,-1.45)[<-];
\draw[violet][line width=1.2pt] (0, 2.3) node[left]{$\tvarsig$} to[out=0, in=165] (0.8,2.2) [dotted];
\draw[violet][line width=1.2pt] (0.8, 2.2) to[out=-15,in=120] ( 1.73,-1.00)[<-];
\draw[black] ( 0.89*1.2,-1.45*1.2) -- ( 1.73*1.2,-1.00*1.2) [<-][dashed];
\draw (0,-3) node{(b)};
\end{tikzpicture}
  \caption{Inverse rotating}
  \label{fig:antirota}
\end{figure}
  \item[Step 2] \textbf{Rotating the last intersection}.
  We move the endpoint $\varsigma(1)$ along the edges of $\PP^{\ast}$
  by using the method similar to Step 1.
\end{itemize}
\end{definition}

Notice that we not need to consider the case of $\tvarsig \in
\AC ^{\oslash}_{\oslash}(\SURF) \cup \AC^{\oslash}_{\m}(\SURF)$
because each semi-orthogonal decomposition of $\Dcat^b(A)$
can be induced by a semi-orthogonal decomposition of $\per(A)$.
Moreover, it is clear that \[\MM((c^{\rota})^{\antirota}) \cong \MM(c)\]
holds for any $c\in\PC_{\m}(\SURF)$.

\begin{definition} \rm
We call that an admissible curve $\tvarsig$ satisfies \defines{$\Dblue$-arc property} if
it is such that all segments obtained by $\Dblue$ cutting $\tvarsig$,
except the segment from $\varsigma(0)$ to $\varsigma_{\ast}$ and the segment from $\varsigma^{\ast}$ to $\varsigma(1)$,
are $\Dblue$-arc segments.
\end{definition}

\begin{lemma} \label{lemm:antirota}
For any admissible curve $\tvarsig$ satisfying $\Dblue$-arc property, then $\varsigma^{\antirota}\in\PC_{\m}(\SURF)$,
i.e., exists a permissible curve $c\in \PC(\SURF)$ such that $\tvarsig \simeq \tc^{\rota}$.
\end{lemma}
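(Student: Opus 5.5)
We argue directly from the definitions, by comparing the decomposition of $\varsigma^{\antirota}$ into pieces cut out by $\Dblue$ with the shape required by Definition~\ref{def:perm and adm curve}(1). Write $\varsigma$ as the concatenation
\[
\varsigma=\sigma_{\ast}\,\sigma_1\cdots\sigma_r\,\sigma^{\ast},
\]
obtained by cutting along $\Dblue$. Here $\sigma_{\ast}$ runs from $\varsigma(0)$ to $\varsigma_{\ast}$ inside $\PP_{\ast}$, and $\sigma^{\ast}$ runs from $\varsigma^{\ast}$ to $\varsigma(1)$ inside $\PP^{\ast}$. By the $\Dblue$-arc property, each middle piece $\sigma_j$ is a $\Dblue$-arc segment of one of the Cases $\gbullet$-A to $\gbullet$-D in \Pic~\ref{fig:arc segment I}.

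Inverse rotation (Definition~\ref{def:inv rotat}) changes $\varsigma$ only inside $\PP_{\ast}$ and $\PP^{\ast}$. So $\varsigma^{\antirota}=\sigma_{\ast}'\,\sigma_1\cdots\sigma_r\,\sigma^{\ast\prime}$, where $\sigma_{\ast}'$ is the new first piece. We then show three things.
\begin{enumerate}
  \item $\sigma_{\ast}'$ is itself a $\Dblue$-arc segment, and its endpoint lies in $\M\cup\E$.
  \item The same holds for $\sigma^{\ast\prime}$.
  \item Adjacent segments stay distinct, as condition~(1.2) requires.
\end{enumerate}

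\textbf{First piece: ordinary polygon.} Suppose $\PP_{\ast}$ is neither a 2-gon nor an $\infty$-polygon. Label its edges $a_{\gbullet}^1,\dots,a_{\gbullet}^u$ clockwise, with $a_{\gbullet}^{t+1}$ the first arc crossed by $\varsigma$. Moving $\varsigma(0)$ anticlockwise along the $\Dblue$-edges as far as possible puts the new endpoint at the $\gbullet$-marked point that is a vertex of $a_{\gbullet}^{t+1}$. The resulting piece $\sigma_{\ast}'$ therefore runs from a vertex of $\PP_{\ast}$ to an interior point of a non-adjacent (or the adjacent) edge. This is exactly Case $\gbullet$-A, or Case $\gbullet$-B after homotopy, so it is a $\Dblue$-arc segment with endpoint in $\M$. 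This is precisely the reverse of Step~1 of Definition~\ref{def:rotat} in \Pic~\ref{fig:rotating curve}(a); compare \Pic~\ref{fig:antirota}(a).

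\textbf{First piece: 2-gon.} Suppose $\PP_{\ast}$ is a 2-gon as in Case $\gbullet$-C. Then the new endpoint is the extra $\rbullet$-point of $\E$ on its boundary edge, and $\sigma_{\ast}'$ is a Case $\gbullet$-C segment.

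\textbf{First piece: $\infty$-polygon.} Suppose $\PP_{\ast}$ is an $\infty$-elementary polygon. Then $\varsigma$ cannot have a finite endpoint there unless it winds, and the admissible curves in $\AC_{\m}$ with endpoints handled here are finite. The winding case is the Case $\gbullet$-D pattern of \Pic~\ref{fig:rotating curve}(b), and we treat it as in Step~1 there; it reduces to Case $\gbullet$-D.

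\textbf{Last piece.} The last piece $\sigma^{\ast\prime}$ is handled symmetrically.

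\textbf{Condition (1.2).} Assume for contradiction that $\sigma_{\ast}'$ equals $\sigma_1$ or $\sigma_1^{-1}$ as a homotopy class of arc segments. Then $\varsigma$ would cross the same $\Dblue$-arc twice in succession, bounding a bigon. This contradicts the minimal-intersection position that we always assume for curves. Inside the middle of the curve, (1.2) is inherited from minimality of $\varsigma$ against $\Dblue$ in the same way.

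\textbf{Conclusion.} With these facts, $c:=\varsigma^{\antirota}$ lies in $\PC_{\m}(\SURF)$. Applying Definition~\ref{def:rotat} to $c$ moves each endpoint clockwise along the same polygon edges and undoes the anticlockwise moves, so $c^{\rota}\simeq\varsigma$ as curves. For the grading, the grading of an admissible curve is determined up to a global shift by (2a.3). Choosing the normalization $\min_i\tc_i=0$ used for projective resolutions gives $\tvarsig\simeq\tc^{\rota}$, up to the shift fixed by that convention.

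\textbf{Main obstacle.} The main work is the case analysis for the end polygons. We must make sure that the anticlockwise move "as far as possible" really stops at a vertex of $a_{\gbullet}^{t+1}$ that lies in $\M$, rather than running past into a position giving a non-permissible segment. We must also verify that the rotation and inverse rotation steps are mutually inverse, especially for the degenerate 2-gon and $\infty$-polygon configurations. We would check these using the local pictures \Pic~\ref{fig:rotating curve} and \Pic~\ref{fig:antirota} case by case.
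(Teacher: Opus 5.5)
You follow the paper's route. You keep the middle pieces, which are $\Dblue$-arc segments by hypothesis, replace the two end pieces inside $\PP_\ast$ and $\PP^\ast$ by $\Dblue$-arc segments ending at the same crossing points, and conclude $\varsigma\simeq c^{\rota}$ for $c=\varsigma^{\antirota}$. The paper's proof simply asserts this last step. That step, which you yourself flag as the main obstacle, does not follow, and it actually fails under the stated hypothesis.

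\textbf{Why the last step fails.} Definition~\ref{def:rotat} moves an endpoint clockwise \emph{as far as possible}, i.e.\ until the boundary edge of the polygon stops it. For a Case~$\gbullet$-C end it moves the extra point onto an endpoint of the crossed arc. That crossing then disappears after tightening, and the starting polygon of $c^{\rota}$ is no longer $\PP_0$ of $c$. So rotation returns to $\varsigma(0)$ only if $\varsigma(0)$ already sits where the clockwise move terminates. The $\Dblue$-arc property constrains only middle pieces and does not force this.

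\textbf{A counterexample.} Let $A=\kk(1\xrightarrow{\alpha}2\xrightarrow{\beta}3\xrightarrow{\gamma}4)/\langle\beta\gamma\rangle$, with the conventions of Example~\ref{examp:gentle260802}, so $P(1)=\bsm 1\\2\\3\esm$ and $P(3)=\bsm 3\\4\esm$. Since $\alpha\beta\neq 0$, the angles $\alpha$ and $\beta$ sit at one marked point $y$ common to the arcs of $1,2,3$, with the arc of $2$ between the other two. The curve $\varsigma$ of the cone of $P(3)\xrightarrow{\alpha\beta}P(1)$ is the smoothing of the arcs of $1$ and $3$ around $y$. It meets $\Dblue$ in its interior exactly once, in the arc of $2$, so it has the $\Dblue$-arc property vacuously. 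Both of its end polygons are ordinary. Yet $\X(\tvarsig)$ has cohomology $S(4)$ and $\bsm 1\\2\esm$ in two different degrees. By Theorem~\ref{thm:Chang}, no shift of it equals $\X(\tc^{\rota})$ for any permissible $c$. Closing this step therefore needs an extra hypothesis, for example that the endpoints of $\varsigma$ lie exactly where the clockwise moves of Definition~\ref{def:rotat} terminate, or that $\X(\tvarsig)$ has cohomology in a single degree.

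\textbf{The end-piece step.} Your verification of the end pieces also does not work as written.
\begin{itemize}
  \item A Case~$\gbullet$-B segment joins two edges and has no endpoint in $\M$, so it can never be an end piece.
  \item A segment from an arbitrary vertex of $\PP_\ast$ to an interior point of an edge is not a $\Dblue$-arc segment. If it were, $\sigma_\ast$ itself would already qualify and inverse rotation would be pointless.
  \item If the new endpoint is a vertex of the crossed arc, as you write, the new piece is homotopic into that arc and the crossing is lost.
\end{itemize}
In Figure~\ref{fig:inver antirota} the crossed arc is $a^{\theta}_{\gbullet}$, and the new endpoint is the far endpoint of the neighbouring edge $a^{\theta+1}_{\gbullet}$. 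That choice makes $\sigma_\ast'$ a Case~$\gbullet$-A segment, running parallel to $a^{\theta+1}_{\gbullet}$ around their common vertex. This works only if that neighbour is a $\Dblue$-arc reached before the boundary edge of $\PP_\ast$, which you must check.

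\textbf{The $\infty$-polygon case.} Your paragraph on this case is wrong. A finite admissible curve can start at a vertex of an $\infty$-elementary polygon, and Definition~\ref{def:inv rotat} does not define the inverse rotation there. Moreover, by Figure~\ref{fig:rotating curve}(b), the rotation of any permissible curve with an end in such a polygon is infinite at that end. These curves must be excluded, not reduced to Case~$\gbullet$-D.
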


\begin{proof}
We only proof the case for $\tvarsig$ to be finite. The cases for left infinite/right infinite/infinite are similar.
Assume $\varsigma = \varsigma_{[0,1]}\varsigma_{[1,2]}\cdots \varsigma_{[n(\varsigma),n(\varsigma)+1]}$
(each $\varsigma_{[i,i+1]}$ is a map $\varsigma_{[i,i+1]}:[t_i,t_{i+1}]\to\Surf$ by
the notations given in Definition \ref{def:perm and adm curve} (2)).
If $P_{\ast}$ and $P^{\ast}$ are not a 2-gons (we write all edges of $P_{\ast}$ are $a_{\gbullet}^{1}$, $a_{\gbullet}^{2}$, $\ldots$, $a_{\gbullet}^{u}$ in a clockwise direction, see \Pic \ref{fig:inver antirota}),
then we get that the segment from $\varsigma(0)$ to $\varsigma_{\ast}$ is
$\varsigma_{[0,1]}\varsigma_{[1,2]}\cdots \varsigma_{[u-(\theta+1),u-\theta]}$, i.e.,
\[ \varsigma = \varsigma_{[0,1]}\varsigma_{[1,2]}\cdots \varsigma_{[u-(\theta+1),u-\theta]}
\underbrace{\cdots \varsigma_{[n(\varsigma),n(\varsigma)+1]}}_{\sigma}.\]
Let $\sigma':[0,1]\to \PP_{\ast}$ ($\subseteq \Surf$) be the $\Dblue$-arc segment lying in $\PP_{\ast}$
such that $\sigma'(1)=\varsigma_{[u-(\theta+1),u-\theta]}(t_{u-\theta})$.
Then we obtain a curve $\sigma'\sigma$, and the segment of $\varsigma$ from $\varsigma^{\ast}$ to $\varsigma(1)$
is also a segment of the curve
\[\sigma'\sigma = \sigma'\varsigma_{[u-\theta,u-\theta+1]} \cdots \varsigma_{[n(\varsigma), n(\varsigma)+1]}. \]
\begin{figure}[H]
  \centering
\centering
\definecolor{ffqqqq}{rgb}{1,0,0}
\definecolor{bluearc}{rgb}{0,0,1}
\begin{tikzpicture} \small
\draw[bluearc][line width=1.2pt] ( 0.00, 2) -- (-1.73, 1) [dotted];
\draw[bluearc][line width=1.2pt] ( 1.73, 1) -- ( 0.00, 2);
\draw[ black ][line width=1.2pt] (-1.73, 1) -- (-1.73,-1);
\draw[bluearc][line width=1.2pt] (-1.73,-1) -- ( 0.00,-2);
\draw[bluearc][line width=1.2pt] ( 0.00,-2) -- ( 1.73,-1) [dotted];
\draw[bluearc][line width=1.2pt] ( 1.73,-1) -- ( 1.73, 1);
\draw( 1.0, 1.7) node{$a_{\gbullet}^{\theta}$};
\draw( 1.9, 0.0) node[right]{$a_{\gbullet}^{\theta+1}=a_{\gbullet}^{u-(u-(\theta+1))}$};
\draw(-1.0,-1.7) node{$a_{\gbullet}^u$};
\fill [bluearc] ( 1.73, 1) circle (2.8pt);
\fill [bluearc] ( 0.00, 2) circle (2.8pt);
\fill [bluearc] (-1.73, 1) circle (2.8pt);
\fill [bluearc] (-1.73,-1) circle (2.8pt);
\fill [bluearc] ( 0.00,-2) circle (2.8pt);
\fill [bluearc] ( 1.73,-1) circle (2.8pt);
\fill [bluearc] ( 1.73, 1) circle (2.8pt);
\draw [red][line width=0.55pt] (-1.73,0) -- (-0.89, 1.45)[dotted];
\draw [red][line width=0.55pt] (-1.73,0) -- ( 0.89, 1.45);
\draw [red][line width=0.55pt] (-1.73,0) -- ( 1.73, 0.00);
\draw [red][line width=0.55pt] (-1.73,0) -- ( 0.89,-1.45)[dotted];
\draw [red][line width=0.55pt] (-1.73,0) -- (-0.89,-1.45);
\fill [red] (-1.73,0) circle (2.8pt); \fill [white] (-1.73,0) circle (2pt);
\draw[orange][line width=1.2pt] (0, 2.7) node[left]{$\varsigma^{\antirota}$} to[out=0, in=165] (1,2.6) [dotted];
\draw[orange][line width=1.2pt] (1, 2.6) to[out=-15, in=180] (1.73,-1) [<-];
\draw[violet][line width=1.2pt] (0, 2.3) node[left]{$\tvarsig$} to[out=0, in=165] (1,2.2) [dotted];
\draw[violet][line width=1.2pt] (1, 2.2)
  to[out=-15, in= 80] ( 1,0) to[out=-110,in=  0] (0,-1) -- (-1.73,-1)[<-];
\draw[black] (-1.73*1.2,-1.00*1.2) -- (-0.00*1.2,-2.00*1.2) -- ( 1.73*1.2,-1.00*1.2) [->][dashed];
\end{tikzpicture}
\caption{Inverse rotating}
\label{fig:inver antirota}
\end{figure}

In a similar way, we can find a $\Dblue$-arc segment $\sigma''$ to replace the segment from $\varsigma^{\ast}$ to $\varsigma(1)$,
and obtain a new curve $\sigma'\sigma''$ which lies in $\PC(\SURF)$.
In this case, we have $c:=\sigma'\sigma'' \simeq \varsigma^{\antirota}$,
it follows that $\varsigma\simeq c^{\rota}$.

The case for at least one of $\PP_{\ast}$ and $\PP^{\ast}$ to be a 2-gon as shown in \Pic \ref{fig:arc segment I} Case $\gbullet$-C is similar.
\end{proof}

\begin{lemma} \label{lemm:rota}
For any $c\in \PC_{\m}(\SURF)$, we have $\tc^{\rota}\in \AC(\SURF)$,
and $\tc^{\rota}$ satisfies $\Dblue$-arc property.
\end{lemma}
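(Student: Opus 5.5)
The plan is to follow the two-step construction of Definition~\ref{def:rotat} and check two things: that each step produces a legitimate admissible curve, and that the interior of $c$ is left untouched. Write $c=c_{(0,1)}c_{(1,2)}\cdots c_{(m(c),m(c)+1)}$ with elementary $\gbullet$-polygons $\PP_0,\dots,\PP_{m(c)}$.

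\textbf{Interior segments.} The rotation only modifies the first segment $c_{(0,1)}$ inside $\PP_0$ and the last segment $c_{(m(c),m(c)+1)}$ inside $\PP_{m(c)}$. The middle segments $c_{(1,2)},\dots,c_{(m(c)-1,m(c))}$ are therefore untouched, and they are $\Dblue$-arc segments by definition of a permissible curve. Moreover, the intersections of $c^{\rota}$ with $\Dblue$ in $\innerSurf$ are exactly the crossing points $c(t_1),\dots,c(t_{m(c)})$ of $c$, up to homotopy relative to $\Dblue$. The new endpoint parts do not create additional interior crossings with $\Dblue$: the moved endpoint slides along edges of the same polygon $\PP_0$ (resp.\ $\PP_{m(c)}$), or along the boundary in the $2$-gon case, and never passes through the interior of a $\Dblue$-arc. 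Hence $\varsigma_\ast=c(t_1)$ and $\varsigma^\ast=c(t_{m(c)})$. All segments cut out by $\Dblue$, except the initial and terminal ones, are then $\Dblue$-arc segments. This is precisely the $\Dblue$-arc property. The degenerate cases $m(c)=0$ and trivial permissible curves are checked directly; there the property is vacuous.

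\textbf{Admissibility.} Next I show $c^{\rota}\in\AC(\SURF)$. Its endpoints lie in $\M$: in the non-$2$-gon case the endpoint is pushed clockwise along the edges in $\Egreen(\PP_0)$ to the last $\gbullet$-vertex before the boundary edge; in the $2$-gon case it is moved to the next $\gbullet$-marked point. I then decompose $c^{\rota}$ into $\Dred$-arc segments using duality. Each $\Dblue$-arc crossing of $c$ corresponds to a Case $\rbullet$-A or $\rbullet$-B segment in the dual $\rbullet$-polygons. The rotated first piece sweeps across the $\Dred$-arcs dual to the edges of $\PP_0$ lying between the original and the new endpoint, producing a finite sequence of $\Dred$-arc segments, or an infinite one winding around $b$ in the $\infty$-elementary case of \Pic\ref{fig:rotating curve}(b). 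In the infinite case, the clockwise choice of rotation direction is exactly what (2b.3) and (2c.3) require. Condition (2a.2), that adjacent segments differ, follows because $c$ satisfies (1.2) and the rotated piece moves monotonically along the polygon. The grading $\tc^{\rota}$ is then defined as in (2a.3), normalized so that $\min\tc_i=0$.

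\textbf{Main obstacle.} I expect the hardest step to be the careful local analysis in the first and last polygons. There one must verify minimal position of $c^{\rota}$ against both $\Dred$ and $\Dblue$, so that no bigon permits removing or adding crossings. The $\infty$-elementary and $2$-gon (Case $\gbullet$-C/D) situations need to be handled separately. I would treat these by inspecting \Pic\ref{fig:rotating curve}(a)--(c) case by case. Alternatively, one can appeal to Theorem~\ref{thm:Chang}, which already asserts $\tc^{\rota}\in\AC_{\m}(\SURF)$, so that only the $\Dblue$-arc property genuinely remains to be proved.
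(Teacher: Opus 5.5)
Your overall route is the paper's: admissibility of $\tc^{\rota}$ comes from Theorem~\ref{thm:Chang}, and the $\Dblue$-arc property comes from the middle pieces of $c^{\rota}$ being inherited from $c$. However, one step as you state it is false. You claim that the interior $\Dblue$-crossings of $c^{\rota}$ are \emph{exactly} $c(t_1),\dots,c(t_{m(c)})$, hence $\varsigma_\ast=c(t_1)$ and $\varsigma^\ast=c(t_{m(c)})$. Creating no new crossings does not mean the old ones survive. Suppose the moved endpoint is an endpoint $v$ of the first crossed arc $\tau_1$. Then the new first segment and the piece of $\tau_1$ from $v$ to $c(t_1)$ bound a half-bigon. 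In minimal position (the paper's standing convention) the crossing with $\tau_1$ therefore disappears, and further crossings with arcs of the fan at $v$ may disappear after it.

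This happens every time $\PP_0$ is a Case $\gbullet$-C digon, because both vertices of such a digon are endpoints of its unique $\Dblue$-edge; \Pic\ref{fig:rotating curve}(c) shows $c^{\rota}$ no longer crossing that edge. Concretely, in Example~\ref{examp:gentle260802-od} the permissible curve $c_1^{\antirota}$ of $S(1)$ crosses the $\Dblue$-arc of vertex $1$, whereas its rotation $c_1$ has no interior crossing with $\Dblue$ at all. Likewise, for a projective $P(x)$, Theorem~\ref{thm:Chang} forces $\X(\tc^{\rota})\cong P(x)$, so $c^{\rota}$ is the $\Dblue$-arc of $x$ itself. So the minimal-position check you single out as the main obstacle does not come out the way you predict.

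The conclusion survives if you prove only an inclusion. The rotation adds no interior crossings. Each cancellation is a half-bigon at an endpoint of the curve, and the interior of $c$ is already in minimal position. Hence crossings are removed only from the two ends, and the middle segments of $c^{\rota}$ form a consecutive sub-block of $c_{(1,2)},\dots,c_{(m(c)-1,m(c))}$, so they are $\Dblue$-arc segments. If the cancellations from the two ends meet, there are no middle segments and the property is vacuous. This is exactly what the paper uses when it says that $\varsigma_2,\dots,\varsigma_{\ell-1}$ ``can be seen as segments of $c$.''

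For the same reason, drop your direct admissibility sketch and simply invoke Theorem~\ref{thm:Chang}, as the paper does. The $\Dred$-segments of $c^{\rota}$ are not governed by the $\Dblue$-crossings of $c$: for $S(1)$ above, $c$ crosses one $\Dblue$-arc while $c_1$ crosses three $\Dred$-arcs, matching $P(3)\to P(2)\to P(1)$. Also, (2a.2) does not follow from (1.2), since the two conditions concern different arc systems.
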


\begin{proof}
First of all, by Theorem \ref{thm:Chang}, we have $\tvarsig := \tc^{\rota}\in\AC(\SURF)$.
Next, assume that all segments obtained by $\Dblue$ cutting $\varsigma$ are
$\varsigma_1$, $\varsigma_2$, $\ldots$, $\varsigma_{\ell}$,
where $\varsigma_1$ is the segment from $\varsigma(0)$ to $\varsigma_{\ast}$,
and $\varsigma_{\ell}$ is the segment from $\varsigma^{\ast}$ to $\varsigma(1)$.
If $\tvarsig$ does not satisfy $\Dblue$-arc property, then there exists an integer $i$, $2\=< i\=< \ell-1$,
such that $\varsigma_i$ is not a $\Dblue$-arc segment, i.e.,
$\varsigma_i$ is not any form shown in \Pic \ref{fig:arc segment I}.
In this case, we obtain that $c$ is not a permissible curve
since $\varsigma_2$, $\ldots$, $\varsigma_{\ell-1}$ can be seen as segments of $c$
obtained by $\Dblue$ cutting $c$, a contradiction.
\end{proof}

By Lemmas \ref{lemm:antirota} and \ref{lemm:rota}, we obtain the following result immediately.

\begin{proposition}\label{prop:rota}
The inverse rotation $\varsigma^{\antirota}$ of an admissible curve $\tvarsig$
is a permissible curve if and only if it satisfies $\Dblue$-arc property.
\end{proposition}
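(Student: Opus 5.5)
The plan is to prove the two implications separately, each by one of the two preceding lemmas, after checking that rotation and inverse rotation undo each other at the level of curves (up to homotopy).

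For the \emph{if} direction, I assume $\tvarsig$ satisfies the $\Dblue$-arc property and apply Lemma \ref{lemm:antirota} directly. It yields a permissible curve $c\in\PC_{\m}(\SURF)$ with $\tvarsig\simeq\tc^{\rota}$, and its construction identifies $c$ with $\varsigma^{\antirota}$. Concretely, the two end pieces of $\varsigma$, which lie in $\PP_{\ast}$ and $\PP^{\ast}$, are replaced by the $\Dblue$-arc segments $\sigma'$ and $\sigma''$. These segments are exactly what Steps 1 and 2 of Definition \ref{def:inv rotat} produce. The middle pieces are already $\Dblue$-arc segments by hypothesis. Consecutive segments stay distinct, because adjacent $\Dred$-arc segments of $\varsigma$ are distinct by (2a.2) and minimal intersection with $\Dblue$ is preserved. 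Hence condition (1.2) of Definition \ref{def:perm and adm curve} holds, and $\varsigma^{\antirota}$ is permissible.

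For the \emph{only if} direction, I assume $c:=\varsigma^{\antirota}$ is a permissible curve. The key claim is that $(\varsigma^{\antirota})^{\rota}\simeq\varsigma$ as curves. To see this, compare Definition \ref{def:rotat} with Definition \ref{def:inv rotat} polygon by polygon.
\begin{itemize}
\item In a non-$2$-gon starting polygon $\PP_{\ast}$, inverse rotation moves the endpoint anticlockwise along $\Egreen(\PP_{\ast})$ until it reaches the first endpoint of the first arc crossed. Rotation then moves it clockwise as far as possible, which restores the original endpoint.
\item In the $2$-gon case (Case $\gbullet$-C), one operation moves the endpoint to the previous $\rbullet$-point and the other to the next $\gbullet$-point along the boundary, so again they cancel.
\end{itemize}
The same comparison applies to the ending polygon $\PP^{\ast}$. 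With the claim in hand, Lemma \ref{lemm:rota} applied to $c$ shows that $\tc^{\rota}$, and hence $\tvarsig$, satisfies the $\Dblue$-arc property. This property depends only on the underlying curve and not on its grading.

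I expect the main obstacle to be the identity $(\varsigma^{\antirota})^{\rota}\simeq\varsigma$. The subtle point is that the rotation of $c$ must use the same polygons $\PP_{\ast}$ and $\PP^{\ast}$ as the inverse rotation. This requires that the first and last $\Dblue$-arcs crossed by $\varsigma$ are still crossed by $c$, with the end segments of $c$ lying in those polygons. I would check this case by case in the non-$2$-gon and $2$-gon configurations of Figure \ref{fig:antirota}, and handle $\infty$-elementary polygons via the winding picture of Figure \ref{fig:rotating curve}(b).
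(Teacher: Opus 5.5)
Your two-step structure is the paper's own. Its proof just combines Lemma~\ref{lemm:antirota} (``if'') with Lemma~\ref{lemm:rota} (``only if''), and you make explicit the bridge $(\varsigma^{\antirota})^{\rota}\simeq\varsigma$ that the paper leaves implicit. That bridge is false in general, and it fails at your sentence ``rotation then moves it clockwise as far as possible, which restores the original endpoint''. In the non-$2$-gon case, rotation always moves the endpoint to one fixed vertex of $\PP_{\ast}$: the end of its boundary edge reached clockwise. Definition~\ref{def:inv rotat}, by contrast, sends several different starting vertices of $\PP_{\ast}$ to the same vertex, so the composite forgets where $\varsigma$ started.

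Concretely, in Figure~\ref{fig:antirota}(a) the curve $\tvarsig$ starts at the lower-left vertex, and its endpoint is moved anticlockwise, through the bottom vertex, to the lower-right vertex. Let $\varsigma'$ be the admissible curve that starts at the bottom vertex, crosses the same first arc, and agrees with $\varsigma$ afterwards. Its endpoint is also moved to the lower-right vertex, so $(\varsigma')^{\antirota}=\varsigma^{\antirota}$. Rotating back moves the endpoint clockwise to the lower-left vertex, not to the bottom one, so $((\varsigma')^{\antirota})^{\rota}\not\simeq\varsigma'$. The last sentence of the proof of Lemma~\ref{lemm:antirota} makes the same over-claim. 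For the ``if'' direction you only need that lemma's conclusion that $\varsigma^{\antirota}$ is permissible, which is all the proposition asks.

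What is true, and all you need, is weaker. Inverse rotation changes $\varsigma$ only before its first and after its last interior crossing with $\Dblue$, namely inside $\PP_{\ast}$, $\PP^{\ast}$, and in the $2$-gon case the adjacent digon. It also keeps these two crossings. Hence every $\Dblue$-segment of $\varsigma$ other than the two end ones is also a $\Dblue$-segment of $\varsigma^{\antirota}$. If $\varsigma^{\antirota}$ is permissible, all its $\Dblue$-segments are $\Dblue$-arc segments, so $\tvarsig$ has the $\Dblue$-arc property. This proves ``only if'' directly, without Lemma~\ref{lemm:rota} and without the case-by-case comparison you anticipated.

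If you prefer the paper's route, apply Lemma~\ref{lemm:rota} to $c=\varsigma^{\antirota}$ and transfer the property from $\tc^{\rota}$ to $\tvarsig$. Rotation also alters only the end pieces, so these two curves share their middle segments even when their endpoints differ.

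A small point in your ``if'' direction: distinctness of consecutive $\Dblue$-segments comes from $\varsigma$ being in minimal position with respect to $\Dblue$. It does not come from (2a.2), which concerns $\Dred$-segments.
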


\begin{remark}\label{rmk:rota}\rm
Proposition \ref{prop:rota} shows that when $\tvarsig$ satisfies $\Dblue$-arc property,
$\varsigma^{\antirota}$ is a permissible curve, and, by Theorem \ref{thm:Chang},
there is an integer $n$ such that $\shift{n}{\X(\tvarsig)}$ ($\in\ind(\per(A))$) is a
projective resolution of $\MM(\varsigma^{\antirota})$.
\end{remark}

\section{Semi-orthogonal decompositions for gentle algebras} \label{sec:semi-orth of gentle}

In our paper, for a category $\calA$ and its full subcategory $\calC$,
we call $\calC$ is \defines{closed under extensions} if for any $X, Z\in \calC$,
every object $Y$ given by an extension $0\to X\to Y \to Z \to 0$ in $\calA$ is also an object in $\calC$.
For an Abelian category $\calA$ and a collection $\calS$ of some objects in $\calA$,
we denote by $\langle\calS\rangle_{\calA}$ the smallest full subcategory of $\calA$ containing $\calS$
such that it is closed under extensions.
%
For a triangulated category $\calT$ and a collection $\calS$
of objects in $\calT$, we denote by $\langle\calS\rangle_{\calT}$
the smallest full triangulated subcategory of $\calT$ containing $\calS$.

\subsection{Conditions for semi orthogonal decompositions}

In this subsection we provide a description of semi-orthogonal decompositions for gentle algebras by using FFAS.

\begin{proposition} \label{prop:OD}
Let $A$ be a gentle algebra and $\SURF=(\Surf,\M,\Y,\Dblue,\Dred)$ be its marked ribbon surface.
If there are two families of permissible curves $\calC= \{c_i : i\in I\}$
and $\calD = \{c_j' : j\in J\}$ such that the following conditions \ref{OD1} and \ref{OD2} hold, then
$\langle\langle\X(\calC^{\rota})\rangle_{\per(A)}, \langle\X(\calD^{\rota})\rangle_{\per(A)} \rangle$
is a semi-orthogonal decomposition of $\per(A)$.
\begin{enumerate}[label=\text{\rm(sod\arabic*)}]
  \item The union $\calC^{\rota} \cup \calD^{\rota}$ is a $\gbullet$-FFAS of $\SURF$. \label{OD1}
  \item For any $c_i\in\calC$ and $c_j'\in\calD$, one of the following conditions holds: \label{OD2}
    \begin{enumerate}[label=\text{\rm(sod2.\arabic*)}]
      \item $c_i^{\rota}\cap c_j'^{\rota} = \emptyset$, i.e., $c_i$ and $c_j'$ has no intersection anywhere on $\Surf$;
        \label{OD2.1}
      \item if $c_i^{\rota}\cap c_j'^{\rota} \ne \emptyset$, then for each intersection $p\in c_i^{\rota}\cap c_j'^{\rota}$,
        $p$ must be a $\gbullet$-marked point and $c_i$ is right to $c_j'$ at $p$.
        \label{OD2.2}
    \end{enumerate}
\end{enumerate}
\end{proposition}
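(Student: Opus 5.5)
Set $\calX:=\langle\X(\calC^{\rota})\rangle_{\per(A)}$ and $\calY:=\langle\X(\calD^{\rota})\rangle_{\per(A)}$. The plan is to verify the two axioms \ref{def-OD1} and \ref{def-OD2} separately. Condition \ref{OD1} will give generation, and condition \ref{OD2} will give orthogonality.

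\emph{Step 1: generation.} By \ref{OD1}, $\Gamma:=\calC^{\rota}\cup\calD^{\rota}$ is a $\gbullet$-FFAS of $\SURF$. By the Haiden--Katzarkov--Kontsevich / Opper--Plamondon--Schroll model recalled in Remark \ref{rmk:FFAS}, the objects $\X(\tc)$ for $\tc\in\Gamma$ (with any gradings) classically generate $\per(A)$. Indeed, every admissible curve is built from the arcs of a full formal arc system by iterated smoothings at common $\gbullet$-endpoints, as in \Pic\ref{fig:mapping cone} and \cite[Theorem 3.3]{OPS2018}, and band objects are handled in the same way as the paper notes in the introduction. Hence $\per(A)=\langle\X(\Gamma)\rangle_{\per(A)}$.

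\emph{Step 2: orthogonality.} We show $\Hom(\X(\tc_i^{\rota}),\X(\tc_j'^{\rota})[t])=0$ for all $i\in I$, $j\in J$ and all $t\in\ZZ$. Here we use the OPS description of morphisms between string objects: a basis of $\bigoplus_t\Hom(\X(\tilde\gamma_1),\X(\tilde\gamma_2)[t])$ is indexed by intersection points of $\gamma_1$ and $\gamma_2$, interior or boundary. A boundary $\gbullet$-point $p$ contributes a morphism from $\gamma_1$ to $\gamma_2$ exactly when $\gamma_1$ is left of $\gamma_2$ at $p$, as recalled before \ref{D1}. Since $\Gamma$ is an FFAS, there are no interior intersections. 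Under \ref{OD2.1} there are no intersections at all. Under \ref{OD2.2} every common endpoint has $c_i^{\rota}$ to the right of $c_j'^{\rota}$, so each such point produces a morphism only in the direction $\calD\to\calC$. Hence all graded Homs from $\X(\calC^{\rota})$ to $\X(\calD^{\rota})$ vanish. Since the left orthogonal ${}^{\perp}\calY$ is a triangulated subcategory, a dévissage in two variables then gives $\Hom(\calX,\calY)=0$. This is the step I expect to be delicate: one must check carefully that the left/right convention at $p$ for the rotated curves matches the direction of the morphism in the OPS model. One must also check the degenerate case where the two arcs share both endpoints or meet twice at the same marked point.

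\emph{Step 3: $\per(A)=\calX*\calY$.} With $\Hom(\calX,\calY)=0$, the subcategory $\calX*\calY$ is closed under shifts and contains $\calX\cup\calY$. It is also closed under cones: given a morphism between objects of $\calX*\calY$, we use the standard argument via the octahedral axiom and semi-orthogonality, which shows that the truncation triangles are functorial and that $\calX*\calY$ is triangulated. Alternatively, $\calX*\calY$ is extension-closed, because $\calY*\calX\subseteq\calX*\calY$ follows from $\Hom(\calX,\calY[1])=0$. Since $\calX*\calY$ contains the generators $\X(\Gamma)$, Step 1 then yields $\calX*\calY=\per(A)$, and this completes the semi-orthogonal decomposition.
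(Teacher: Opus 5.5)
Your proof is correct and shares the paper's overall architecture: Hom-vanishing from \ref{OD2}, generation from \ref{OD1}, then $\per(A)=\calX*\calY$. Both substantive steps are carried out by different means, though.

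\textbf{Orthogonality.} The paper starts from arbitrary indecomposables $X\in\calX$ and $Y\in\calY$ with $\Hom(X,Y)\neq 0$. It describes their curves $\tvarsig_X$, $\tvarsig_Y$ as concatenations of generating arcs. It then pushes the intersection point carrying the morphism down, through cases (a), (b.1), (b.2) first for $X$ and then for $Y$, to a nonzero morphism $\X(\tc_i^{\rota})\to\X(\tc_j'^{\rota})[m]$ contradicting \ref{OD2.2}. You instead check vanishing only between generating arcs, where the Hom-basis of the geometric model applies directly, and extend it by the formal two-variable d\'evissage.

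\textbf{The step $\per(A)=\calX*\calY$.} The paper follows a sequence of triangles building an object from the arcs. It handles the bad case (d) by a splitting argument for a single triangle. You prove once and for all that $\calX*\calY$ is a triangulated subcategory, via $\calY*\calX\subseteq\calX*\calY$ and associativity of $*$. This is exactly the general form of the paper's case (d).

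Your route is the more robust one. It never needs to know what the indecomposable objects of $\calX$ and $\calY$ look like; the paper describes them as concatenations of generating arcs without proof and ignores band objects. It also replaces an induction the paper leaves implicit with a standard lemma. The paper's route mainly adds a geometric picture.

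The worries you flag in Step 2 are settled by two things. First, the convention stated just before \ref{D1}: a curve left of another at $p$ gives a morphism from the former to the latter. Second, read \ref{OD2.2} for every pair of ends at $p$. Each such pair contributes exactly one basis morphism, in one direction only, so arcs returning to the same marked point cause no trouble.

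The one input both proofs treat as a black box is Step 1. You need it in the strong form $\langle\X(\Gamma)\rangle_{\per(A)}=\per(A)$ for the smallest triangulated subcategory, not its thick closure. This is because Step 3 shows only that $\calX*\calY$ is triangulated, not that it is closed under direct summands. Your phrases ``classically generate'' and ``every admissible curve is an iterated smoothing at endpoints'' are looser than this.

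You do not need anything about arbitrary curves or bands. It suffices that every arc of $\Dblue$, i.e.\ every indecomposable projective, lies in $\langle\X(\Gamma)\rangle_{\per(A)}$, since $K^b(\proj A)$ is the triangulated subcategory generated by the indecomposable projectives. That is the precise statement to cite. The paper asserts the same generation without proof, so on this point your proposal is as complete as the paper's argument.
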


\begin{proof}
If there are two indecomposable complexes $X\in\langle\X(\calC^{\rota})\rangle_{\per(A)}$
and $Y\in\langle\X(\calD^{\rota})\rangle_{\per(A)}$
such that $\Hom_{\per(A)}(X,Y)\ne 0$, then, consider $X\in\langle\X(\calC^{\rota})\rangle_{\per(A)}$, we have:
\begin{enumerate}[label=\text{\rm(\arabic*)}]
  \item there are admissible curves $\tvarsig_{X,1}$, $\tvarsig_{X,2}$, $\ldots$, $\tvarsig_{X,r} \in \calC^{\rota}$ such that
    $\varsigma_{X,i}(0)=\varsigma_{X,i+1}$ holds for all $1\=< i <r$;
  \item the admissible curve $\tvarsig_X$ corresponding to $X$ is a curve from $\varsigma_{X,1}(0)$ to $\varsigma_{X,r}(1)$;
  \item for the admissible curve $\tvarsig_Y$ corresponding to $Y$, $\tvarsig_X$ and $\tvarsig_Y$ has at least one intersection point $q$
    (in this case, we have two cases need be considered: (a) $q$ lies in the inner $\innerSurf$ of $\Surf$,
    (b) $\tvarsig_X\cap\tvarsig_Y\cap(\innerSurf)=\emptyset$,
    and $q$ is an endpoint of $\tvarsig_X$, see \Pic \ref{fig:X 260723} (A), (B) and (C)).
\end{enumerate}
\begin{figure}[htbp]
  \centering
\begin{tikzpicture}[scale=0.81]
\foreach \x in {0,120,180,240,300}
\draw
  [violet][line width=1.0pt][rotate= \x]
  [postaction={on each segment={mid arrow=violet}}]
    ( 2.00, 0.00) -- ( 1.00, 1.73);
\draw
  [white][line width=1.2pt][rotate=240]
  [postaction={on each segment={mid arrow=violet}}]
    ( 2.00, 0.00) -- ( 1.00, 1.73)[dotted];
\draw
  [violet][line width=1.0pt]
  [postaction={on each segment={mid arrow=violet}}]
  (-1.00, 1.73) to[out=-120, in= 180]
  ( 0.00,-1.25) to[out=   0, in= -60] ( 1.00, 1.73);
\foreach \x in {0,60,120,180,240,300}
\fill[blue][rotate=\x] ( 2.00, 0.00) circle(2pt);
\draw[violet][rotate=180-30] (2.25, 0.00) node{$\tvarsig_{X,1}$};
\draw[violet][rotate=240-30] (2.25, 0.00) node{$\tvarsig_{X,2}$};
\draw[violet][rotate=360-30] (2.25, 0.00) node{$\tvarsig_{X,r-1}$};
\draw[violet][rotate=420-30] (2.25, 0.00) node{$\tvarsig_{X,r}$};
\draw[violet][rotate=     0] (0.55,-1.05) node[above]{$\tvarsig_X$};
\draw[violet!50][line width=1.2pt] (0,-2) -- (0,2)[->] node[left]{$\tvarsig_Y$};
\draw[green][line width=1.25pt] (0,-1.25) circle(3pt) node[below right]{$q$};
\draw[black] (0,-2.5) node{(A): The intersection point $q$};
\draw[black] (0,-3.0) node{of $\tvarsig_X$ and $\tvarsig_Y$ lies in $\innerSurf$};
\end{tikzpicture}
\\
\begin{tikzpicture}[scale=0.81]
\foreach \x in {0,120,180,240,300}
\draw
  [violet][line width=1.0pt][rotate= \x]
  [postaction={on each segment={mid arrow=violet}}]
    ( 2.00, 0.00) -- ( 1.00, 1.73);
\draw
  [white][line width=1.2pt][rotate=240]
  [postaction={on each segment={mid arrow=violet}}]
    ( 2.00, 0.00) -- ( 1.00, 1.73)[dotted];
\draw
  [violet][line width=1.0pt]
  [postaction={on each segment={mid arrow=violet}}]
  (-1.00, 1.73) to[out=-120, in= 180]
  ( 0.00,-1.25) to[out=   0, in= -60] ( 1.00, 1.73);
\foreach \x in {0,60,120,180,240,300}
\fill[blue][rotate=\x] ( 2.00, 0.00) circle(2pt);
\draw[violet][rotate=180-30] (2.25, 0.00) node{$\tvarsig_{X,1}$};
\draw[violet][rotate=240-30] (2.25, 0.00) node{$\tvarsig_{X,2}$};
\draw[violet][rotate=360-30] (2.25, 0.00) node{$\tvarsig_{X,r-1}$};
\draw[violet][rotate=420-30] (2.25, 0.00) node{$\tvarsig_{X,r}$};
\draw[violet][rotate=     0] (0.55,-1.05) node[above]{$\tvarsig_X$};
\draw[violet!50][line width=1.2pt]
  (-1.00, 1.73) to[out=-125, in= 180]
  (-0.00,-1.45) --
  ( 0.20,-1.45) to[out=   0, in=  90]
  ( 0.85,-2.12)[->] node[right]{$\tvarsig_Y$};
\draw[green][line width=1.25pt] (-1.00, 1.73) circle(3pt) node[left]{$q$};
\draw[black] (0,-2.5) node{(B): The intersection point $q$};
\draw[black] (0,-3.0) node{of $\tvarsig_X$ and $\tvarsig_Y$ lies in $\bSurf$};
\end{tikzpicture}
\
\begin{tikzpicture}[scale=0.81]
\foreach \x in {0,120,180,240,300}
\draw
  [violet][line width=1.0pt][rotate= \x]
  [postaction={on each segment={mid arrow=violet}}]
    ( 2.00, 0.00) -- ( 1.00, 1.73);
\draw
  [white][line width=1.2pt][rotate=240]
  [postaction={on each segment={mid arrow=violet}}]
    ( 2.00, 0.00) -- ( 1.00, 1.73)[dotted];
\draw
  [violet][line width=1.0pt]
  [postaction={on each segment={mid arrow=violet}}]
  (-1.00, 1.73) to[out=-120, in= 180]
  ( 0.00,-1.25) to[out=   0, in= -60] ( 1.00, 1.73);
\foreach \x in {0,60,120,180,240,300}
\fill[blue][rotate=\x] ( 2.00, 0.00) circle(2pt);
\draw[violet][rotate=180-30] (2.25, 0.00) node{$\tvarsig_{X,1}$};
\draw[violet][rotate=240-30] (2.25, 0.00) node{$\tvarsig_{X,2}$};
\draw[violet][rotate=360-30] (2.25, 0.00) node{$\tvarsig_{X,r-1}$};
\draw[violet][rotate=420-30] (2.25, 0.00) node{$\tvarsig_{X,r}$};
\draw[violet][rotate=     0] (0.55,-1.05) node[above]{$\tvarsig_X$};
\draw[violet!50][line width=1.2pt]
  (-1.00, 1.73) -- (-2.00, 1.73)[->] node[left]{$\tvarsig_Y$};
\draw[green][line width=1.25pt] (-1.00, 1.73) circle(3pt) node[right]{$q$};
\draw[black] (0,-2.5) node{(C): The intersection point $q$};
\draw[black] (0,-3.0) node{of $\tvarsig_X$ and $\tvarsig_Y$ lies in $\bSurf$};
\end{tikzpicture}
  \caption{The complex $X$ lying in $\langle\X(\calC^{\rota})\rangle_{\per(A)}$
  and the complex $Y$ lying in $\langle\X(\calD^{\rota})\rangle_{\per(A)}$}
  \label{fig:X 260723}
\end{figure}
\noindent
In Case (a), there is an integer $n_{X,Y}$ such that $\Hom_{\per(A)}(X, \shift{n_{X,Y}}{Y})\ne 0$
since the intersection point $q$ provide a non-zero homomorphism
$\X(\tvarsig_X)\to\shift{n_{X,Y}}{\X(\tvarsig_Y)}$ in $\per(A)$.
Then $\tvarsig_Y$ intersect with some $\tvarsig_{X,\imath}$ ($1\=<\imath\=< r$).
In this situation, if $\varsigma_Y(0)=\varsigma_{X,\imath}(0)$ for some $1<\imath\=< r$,
then we obtain $\Hom_{\per(A)}(\X(\tvarsig_{X,\imath-1}), \shift{n_{\imath-1}}{Y})\ne 0$ for some $n_{\imath-1}\in\ZZ$;
otherwise, we have $\emptyset\ne\tvarsig_Y\cap\tvarsig_{X,\imath} \subseteq \innerSurf$ for some $1\=<\imath\=< r$,
it admits $\Hom_{\per(A)}(\X(\tvarsig_{X,\imath}), \shift{n_{\imath}'}{Y})\ne 0$ for some $n_{\imath}'\in\ZZ$.
Notice that each $\tvarsig_{X,\imath}$ lies in $\calC^{\rota}$,
then, by Proposition \ref{prop:rota}, the inverse rotation $\tvarsig_{X,\imath}^{\antirota}$ of $\tvarsig_{X,\imath}$ is a permissible curve in $\calC$.
Therefore, in Case (a), we obtain that
\begin{align}\label{eq:Hom260723}
 \text{there is~} c_i\in\calC \text{~such that~} \Hom_{\per(A)} (\X(\tc_i^{\rota}), \shift{n}{Y}) \ne 0
\end{align}

In Case (b), each intersection point of $\tvarsig_X$ and $\tvarsig_Y$ is a $\gbullet$-marked point.
Since $\Hom_{\per(A)}(X,Y)\ne 0$, there is a point $q\in\tvarsig_X\cap\tvarsig_Y$ such that
$\tvarsig_X$ is left to $\tvarsig_Y$ at $q$.
Without loss of generality, set $q=\varsigma_{X,1}(0)$, then we obtain two subcases as follows:
\begin{itemize}
  \item[(b.1)] $\varsigma_{X}$ is left to $\varsigma_{Y}$ at $q$,
    and $\varsigma_{Y}$ is left to $\varsigma_{X,1}$ at $q$ (see \Pic \ref{fig:X 260723} (B)),
  \item[(b.2)] $\varsigma_{X}$ is left to $\varsigma_{X,1}$ at $q$,
    and $\varsigma_{X,1}$ is left to $\varsigma_{Y}$ at $q$ (see \Pic \ref{fig:X 260723} (C)).
\end{itemize}
In Subcase (b.1), there exists a $\varsigma_{X,\imath}$ intersecting with $\varsigma_{Y}$,
and we can obtain \eqref{eq:Hom260723} by using a method similar to Case (a).
In Subcase (b.2), we have $\Hom_{\per(A)}(\X(\tvarsig_{X,1}),\shift{\ell}{Y}) \ne 0$ for some $\ell\in\ZZ$.
It follows that \eqref{eq:Hom260723} holds for some $c_i\in\calC$.

Next, we using the same way to consider $Y\in\langle\X(\calD^{\rota})\rangle_{\per(A)}$
and $\Hom_{\per(A)}(\X(\tc_i^{\rota}), \shift{\ell}{Y})\ne 0$, then we have:
\begin{enumerate}[label=\text{\rm(\arabic*$'$)}]
  \item there are admissible curves $\tvarsig_{Y,1}$, $\tvarsig_{Y,2}$, $\ldots$, $\tvarsig_{Y,s} \in \calD^{\rota}$ such that
    $\varsigma_{Y,j}(0)=\varsigma_{Y,j+1}$ holds for all $1\=< j<s$;
  \item the admissible curve $\tvarsig_Y$ corresponding to $Y$ is a curve from $\varsigma_{Y,1}(0)$ to $\varsigma_{X,s}(1)$;
  \item for the admissible curve $\tc_i^{\rota}$ corresponding to $\X(\tc_i^{\rota})$, $\tvarsig_Y$ and $\tc_i^{\rota}$ has at least one intersection point.
\end{enumerate}
Furthermore, using a method similar to that used to obtain \eqref{eq:Hom260723}, we can show that
\begin{align}\label{eq:Hom260723-2}
  \text{there is~} c_j'\in \calD \text{~such that~}
  \Hom_{\per(A)}(\X(\tc_i^{\rota}), \shift{m}{\X(\tc_j'^{\rota})}) \ne 0
\end{align}
holds for some $m\in\ZZ$. Thus,
\[\tc_i^{\rota}\cap\tc_j'^{\rota}\ne\emptyset.\]
By \ref{OD1}, $\tc_i^{\rota}\cap\tc_j'^{\rota} \subseteq \Dblue$, then \eqref{eq:Hom260723-2} admits that
there is an common endpoint $p$ such that $\tc_i^{\rota}$ is left to $\tc_j'^{\rota}$ at $p$.
However, it contradict with \ref{OD2.2}. Therefore, we have that
$\Hom_{\per(A)}(X,Y)=0$ holds for all $X\in\langle\X(\calC^{\rota})\rangle_{\per(A)}$
and $Y\in\langle\X(\calD^{\rota})\rangle_{\per(A)}$, i.e.,
\begin{align}\label{eq:Hom is zero}
\Hom_{\per(A)}(\langle\X(\calC^{\rota})\rangle_{\per(A)},\langle\X(\calD^{\rota})\rangle_{\per(A)})=0,
\end{align}
which shows that \ref{def-OD1} holds.

Finally, since $\calC^{\rota}\cup\calD^{\rota}$ is a $\rbullet$-FFAS,
we obtain that the triangulated subcategory of $\per(A)$
generated by $\X(\calC^{\rota}\cup\calD^{\rota}) =
\bigoplus_{\tvarsig\in \calC^{\rota}\cup\calD^{\rota}} \X(\tvarsig)$ is $\per(A)$.
Thus, for any indecomposable object $X$ in $\per(A)$, there is a family of distinguished triangle
\[ (\mathbf{T}_t:= ~ {_tX} \to X_t \to {^tX} \to \shift{1}{{_tX}})_{0\=<t\=< T} \]
such that:
${_0X}\cong \X({_0\tvarsig})$ for some ${_0\tvarsig} \in \calC^{\rota}\cup\calD^{\rota}$,
    and ${^0X}\cong \X({^0\tvarsig})$ for some ${^0\tvarsig}\in \calC^{\rota}\cup\calD^{\rota}$;
for each $\mathbf{T}_t$, $1\=< t < T$, ${_tX}$ and ${^tX}$ are one of the items in some $\mathbf{T}_{t'}$, here, $0\=< t'\=< t-1$;
and $X=X_t$.
By Proposition \ref{prop:rota}, ${_0}\tvarsig$ and ${^0}\tvarsig$ respectively have inverse rotations
${_0}\tvarsig^{\antirota}$ and ${^0}\tvarsig^{\antirota}$ lying in $\PC(\SURF)$
since ${_0}\tvarsig$ and ${^0}\tvarsig$ lies in $\calC^{\rota}\cup\calD^{\rota}$.
The following four cases show that $X\in\langle\X(\calC^{\rota})\rangle_{\per(A)} * \langle\X(\calD^{\rota})\rangle_{\per(A)}$:
\begin{enumerate}[label=\text{\rm(\alph*)}]
  \item If ${_0}\tvarsig^{\antirota}\in \calC$ and ${^0}\tvarsig^{\antirota}\in\calD$,
then we have $X=X_t \in \langle\X(\calC^{\rota})\rangle_{\per(A)} * \langle\X(\calD^{\rota})\rangle_{\per(A)}$.

  \item If ${_0}\tvarsig^{\antirota}, {^0}\tvarsig^{\antirota}\in\calC$,
then we have $X=X_t\in \langle\X(\calC^{\rota})\rangle_{\per(A)} \subseteq \langle\X(\calC^{\rota})\rangle_{\per(A)} * \langle\X(\calD^{\rota})\rangle_{\per(A)}$.

  \item If ${_0}\tvarsig^{\antirota}, {^0}\tvarsig^{\antirota}\in\calD$, then we have $X=X_t\in \langle\X(\calD^{\rota})\rangle_{\per(A)} \subseteq \langle\X(\calC^{\rota})\rangle_{\per(A)} * \langle\X(\calD^{\rota})\rangle_{\per(A)}$.
  \item If ${_0}\tvarsig^{\antirota}\in \calD$ and ${^0}\tvarsig^{\antirota}\in\calC$,
    then we have a distinguished triangle $\mathbf{T}:$ $\X({_0}\tvarsig) \to  X_1 \to \X({^0}\tvarsig)
    \mathop{\to}\limits^{\varphi} \shift{1}{\X({_0}\tvarsig)}$
    which implies a homomorphism $\varphi$ lying in
    $\Hom_{\per(A)}(\langle\calC^{\rota}\rangle_{\per(A)}, \langle\calD^{\rota}\rangle_{\per(A)})$.
    By \eqref{eq:Hom is zero}, we get $\varphi=0$, then $\mathbf{T}$ is split.
    Thus it is trivial that $X_1 \in \langle\X(\calC^{\rota})\rangle_{\per(A)} * \langle\X(\calD^{\rota})\rangle_{\per(A)}$.
\end{enumerate}
Then $\per(A)=\langle\X(\calC^{\rota})\rangle_{\per(A)} * \langle\X(\calD^{\rota})\rangle_{\per(A)}$,
we obtain \ref{def-OD2}.
\end{proof}

\subsection{Semi-orthogonal decompositions and full formal arc systems}

\begin{proposition} \label{prop:good cut}
Let $A$ be a gentle algebra and $\SURF=(\Surf,\M,\Y,\Dblue,\Dred)$ be its marked ribbon surface.
If $\per(A)$ has a semi-orthogonal decomposition $\langle\calF,\calG\rangle$,
then there is a $\gbullet$-FFAS $\Delta$ {\rm(}$\Delta$ and $\Dblue$ may be different{\rm)} such that the following statements hold.
\begin{enumerate}[label=\text{\rm(\arabic*)}]
  \item The good cut $\Omega$ corresponding to $\langle\calF,\calG\rangle$ satisfies $\Omega\cap\Delta\cap(\innerSurf)=\emptyset$.
    \label{prop item: good cut 1}
  \item The $\gbullet$-FFAS $\Delta$ has a decomposition $\Delta=D_1\cup D_2$ that is a disjoint union.
    \label{prop item: good cut 2}
  \item $\calF = \langle\X(D_1)\rangle_{\per(A)}$ and $\calG=\langle \X(D_2)\rangle_{\per(A)}$.
    \label{prop item: good cut 3}
\end{enumerate}
\end{proposition}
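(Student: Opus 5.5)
Given $\langle\calF,\calG\rangle$, Theorem \ref{thm:KS2022} yields a good cut $\Omega$ of $\SURF$ with $\calF$ and $\calG$ described by the pieces of $\SURF_{\Omega}$. By \ref{GC3}, each connected component of $\SURF_{\Omega}$ contains only left added marked points or only right added marked points. Following \cite{KS2022}, these two types of components determine $\calF$ and $\calG$ respectively, up to our opposite convention. The plan is to build $\Delta$ componentwise on $\SURF_{\Omega}$ and then glue the pieces back. The required properties should then follow from the facts \ref{D1}, \ref{D2} and \ref{D3} together with the generation argument already used in the proof of Proposition \ref{prop:OD}.

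\textbf{Step 1.} Let $K$ be a connected component of $\SURF_{\Omega}$. By \ref{GC1}, its boundary consists of pieces of $\bSurf$ together with copies of the dividing curves. I regard $K$ as a marked surface in its own right, with the original marked points plus the added $\gbullet$- and $\rbullet$-points. The alternation condition survives the cut because each $\omega$ joins a $\gbullet$-point to a $\rbullet$-point. By \ref{GC4}, $K$ is non-trivial, so it admits a $\gbullet$-FFAS $\Delta_K$, for instance by Remark \ref{rmk:FFAS}. I take the arcs of $\Delta_K$ to end only at $\gbullet$-points of $K$. Each added point $b_\omega^{\Left}$ or $b_\omega^{\Right}$ is identified back to the original point $b_\omega\in\M$. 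Hence every arc of $\Delta_K$ is a $\gbullet$-curve of $\SURF$ whose interior avoids $\Omega$, which gives \ref{prop item: good cut 1}.

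\textbf{Step 2.} Set $\Delta=\bigcup_K\Delta_K$. Put $D_1$ equal to the union of the $\Delta_K$ over the components $K$ of the type corresponding to $\calF$, and $D_2$ equal to the union over the remaining components. This gives the disjoint union in \ref{prop item: good cut 2}; disjointness is automatic because different components have disjoint interiors. I must then check that $\Delta$ is a $\gbullet$-FFAS of $\SURF$. Arcs from different components do not cross in $\innerSurf$, so the substantive point is the polygon condition, which I expect to be the main obstacle. Gluing along $\omega$ merges two elementary polygons, one on each side of $\omega$, along the segment of $\omega$ itself. Each of these polygons contains part of $\omega$ in its boundary edge, and that edge passes through the added point $r_\omega^{\Left}$ or $r_\omega^{\Right}$. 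After regluing, the merged region has one edge on $\bSurf$, namely the two boundary edges joined at $r_\omega$. I would verify this by a local picture near each dividing curve, using \ref{GC2} so that dividing curves do not interact. If a merged region fails to have a unique boundary edge, I would first try to repair it by choosing $\Delta_K$ so that its polygon adjacent to $\omega$ has $b_\omega^{\Left}$ (resp.\ $b_\omega^{\Right}$) as a vertex.

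\textbf{Step 3.} For \ref{prop item: good cut 3}, I would take gradings on the arcs of $\Delta$ and argue as follows. An arc of $D_1$ lives in a component of the $\calF$-type, so by the KS description $\X$ of that arc lies in $\calF$; thus $\langle\X(D_1)\rangle_{\per(A)}\subseteq\calF$, and similarly $\langle\X(D_2)\rangle_{\per(A)}\subseteq\calG$. Since $\Delta$ is a $\gbullet$-FFAS, $\X(\Delta)$ generates $\per(A)$ (fact \ref{D3}, as in the end of the proof of Proposition \ref{prop:OD}). Therefore $\langle\langle\X(D_1)\rangle,\langle\X(D_2)\rangle\rangle$ is a semi-orthogonal decomposition whose pieces are contained in $\calF$ and $\calG$. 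Given $F\in\calF$, take its decomposition triangle $X_1\to F\to X_2\to$ with $X_1\in\langle\X(D_1)\rangle$ and $X_2\in\langle\X(D_2)\rangle\subseteq\calG$. The map $F\to X_2$ lies in $\Hom(\calF,\calG)=0$, so the triangle splits and $X_2$ is a direct summand of $X_1[1]$. Then $X_2\in\calF\cap\calG=0$, and $F\cong X_1$. Hence $\calF=\langle\X(D_1)\rangle_{\per(A)}$, and symmetrically $\calG=\langle\X(D_2)\rangle_{\per(A)}$.
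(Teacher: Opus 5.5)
Your proposal is correct and takes essentially the same route as the paper. Both cut $\SURF$ along the good cut $\Omega$, choose a $\gbullet$-FFAS on each component of $\SURF_{\Omega}$, take $\Delta$ to be their union, and sort its arcs into $D_1$ and $D_2$ according to whether the component carries left or right added marked points.

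The paper simply asserts that this union is a $\gbullet$-FFAS, and it obtains (3) by invoking Theorem \ref{thm:KS2022} twice. Your version adds two pieces of detail the paper leaves out. First, your local gluing check needs no fallback repair: by \ref{GC3} the two polygons glued along each $\omega$ lie in different components, so they merge into a disk whose only boundary edge is the one through $r_\omega$. Second, you derive (3) from the inclusions of $\X(D_i)$ into the two pieces, generation by $\X(\Delta)$, and the splitting of the decomposition triangle.
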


\begin{proof}
Good cut $\Omega$ divides the marked surface $(\Surf, \M, \Y)$ to some subsurface
$(\Surf_1, \M_1, \Y_1)$, $\ldots$, $(\Surf_t, \M_t, \Y_t)$.
These subsurfaces are marked surfaces, which have some boundary segments is a dividing curve in $\Omega$
(cf. the example shown in \Pic \ref{fig:goodcut-divid}, the subsurface $\Surf'$ is a marked surface).
\begin{figure}[H]
  \centering
\begin{tikzpicture}
\draw[black!25][line width=15pt][rotate= 30] (2,0) arc(0:30:2);
\draw[black!25][line width=15pt][rotate=210] (2,0) arc(0:90:2);
\fill[white] (2,0) arc(0:360:2);
\fill[red!15] ( 1.00,-1.73)--( 1.73, 1.00) arc(30:60:2)
  -- (-1.73,-1.00) arc(210:300:2);
\draw[red] ( 0.5,-0.5) node{$\Surf'$};
\draw[black][line width=1pt][rotate= 30] (2,0) arc(0:30:2);
\draw[black][line width=1pt][rotate=210] (2,0) arc(0:30:2);
\draw[black][line width=1pt][rotate=240] (2,0) arc(0:60:2)[dashed];
\foreach \x in {0,60,120,180,240,300}
\fill[blue][rotate=\x] (2,0) circle(2pt);
\foreach \x in {0,60,120,180,300}
\fill[red][rotate=\x+30] (2,0) circle(2pt);
\foreach \x in {0,60,120,180,300}
\fill[white][rotate=\x+30] (2,0) circle(1.5pt);
\draw[blue][line width=1pt] ( 1.00, 1.73) -- (-2.00, 0.00);
\draw[blue][line width=1pt] ( 1.00, 1.73) -- (-1.00,-1.73);
\draw[blue][line width=1pt] ( 1.00, 1.73) -- ( 1.00,-1.73);
\draw[blue][line width=1pt] (-1.00, 1.73) -- (-2.00, 0.00);
\draw[blue][line width=1pt] ( 1.00,-1.73) -- ( 2.00, 0.00);
\draw[cyan][postaction={on each segment={mid arrow=cyan}}]
  [line width=1pt]
  (-1.73, -1.00) -- ( 1.00, 1.73);
\draw[cyan] (-0.36, 0.36) node[left]{$\omega$};
\draw[cyan][postaction={on each segment={mid arrow=cyan}}]
  [line width=1pt]
  ( 1.73, 1.00) -- ( 1.00,-1.73);
\draw[cyan] ( 1.56, 0.36) node[right]{$\omega'$};
\end{tikzpicture}
  \caption{An example for marked subsurface obtained by}
  \vspace{-0.4cm}
  \begin{center}
    {a good cut $\Omega$ dividing marked surface $(\Surf,\M,\Y)$}
  \end{center}
  \label{fig:goodcut-divid}
\end{figure}

By Remark \ref{rmk:FFAS}, each marked surface $(\Surf_i,\M_i,\Y_i)$ has a $\gbullet$-FFAS $\Delta_i$,
and, by Definition \ref{def:perm and adm curve} \ref{def:adm curv},
each $\gbullet$-curve $\varsigma$ in $\Delta_i$
can be seen as an admissible curve with a natural grading $\tvarsig$.
Then:
\begin{itemize}
  \item we obtain a $\gbullet$-FFAS $\Delta:=\bigcup\limits_{i=1}^t \Delta_i$,
  \item for each $\SURF_i:=(\Surf_i,\M_i,\Y_i,\Delta_i,\Delta_i^{\bot})$;
    by Definition \ref{def:good cut} \ref{GC1},
    $\M_i$ contains two types of marked points: one type comes from the original $\M$, and the other type consists either entirely of left added marked points or entirely of right added marked points;
  \item Furthermore, $\Omega\cap\Delta\cap(\innerSurf)=\emptyset$ (thus, \ref{prop item: good cut 1} holds).
\end{itemize}

Next, we use $\SURF_i^{\Left}$ to represent the marked ribbon surface $\SURF_i$ with left added marked points,
and use $\SURF_i^{\Right}$ to represent the marked ribbon surface $\SURF_i$ with right added marked points.
Let
\begin{align*}
 & D_1 := \{\tvarsig\in\Delta: \tvarsig\subset \SURF_i^{\Left}  \text{~for some~} i\}, \\
 & D_2 := \{\tvarsig\in\Delta: \tvarsig\subset \SURF_i^{\Right} \text{~for some~} i\}.
\end{align*}
Then $\Delta = D_1\cup D_2$, i.e., \ref{prop item: good cut 2} holds,
and by Theorem \ref{thm:KS2022}, we obtain a semi-orthogonal decomposition
\begin{align}\label{eq in prop:good cut}
  \per(A) = \langle \langle\X(D_1)\rangle_{\per(A)}, \langle\X(D_2)\rangle_{\per(A)}\rangle
\end{align}
of $\per(A)$ which is induced by $\Omega$. Thus, using Theorem \ref{thm:KS2022} again,
we have $\calF=\langle\X(D_1)\rangle_{\per(A)}$ and $\calG=\langle\X(D_2)\rangle_{\per(A)}$,
and so \ref{prop item: good cut 3} holds.
\end{proof}

Now the following result is the first main result of our paper.

\begin{theorem} \label{thm:main 260724}
Let $A$ be a gentle algebra and $\SURF$ be its marked ribbon surface.
Then the following statements are equivalent:
\begin{enumerate}[label={\rm(\arabic*)}]
  \item $\per(A)$ has a semi-orthogonal decomposition
    {\rm(}equivalently, $\Dcat^b(A)$ has a semi-orthogonal decomposition{\rm)}; \label{thm:main 260724 1}
  \item the marked ribbon surface $\SURF$ has a $\gbullet$-FFAS $\Delta$, and $\Delta$ has
    a disjoint union decomposition satisfying \ref{OD1} and \ref{OD2}; \label{thm:main 260724 2}
  \item the marked ribbon surface $\SURF$ has a good cut. \label{thm:main 260724 3}
\end{enumerate}
\end{theorem}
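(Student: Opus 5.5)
The plan is to prove the cycle \ref{thm:main 260724 1} $\Rightarrow$ \ref{thm:main 260724 2} $\Rightarrow$ \ref{thm:main 260724 1}, and to attach \ref{thm:main 260724 3} through Theorem \ref{thm:KS2022}. The parenthetical equivalence between $\per(A)$ and $\Dcat^b(A)$ is Theorem \ref{thm:KS2022}: both sets of semi-orthogonal decompositions are in bijection with the good cuts of $\SURF$. Hence \ref{thm:main 260724 1} $\Leftrightarrow$ \ref{thm:main 260724 3} is immediate. The real content is \ref{thm:main 260724 1} $\Leftrightarrow$ \ref{thm:main 260724 2}, which reduces to Propositions \ref{prop:OD} and \ref{prop:good cut}.

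For \ref{thm:main 260724 2} $\Rightarrow$ \ref{thm:main 260724 1}, write $\Delta=D_1\sqcup D_2$ with \ref{OD1} and \ref{OD2}. Apply Proposition \ref{prop:OD} with $\calC^{\rota}=D_1$ and $\calD^{\rota}=D_2$. This gives the semi-orthogonal decomposition $\langle\langle\X(D_1)\rangle_{\per(A)},\langle\X(D_2)\rangle_{\per(A)}\rangle$. The Hom-vanishing step of that proposition only uses the intersection pattern of the curves in $\Delta$, namely the \ref{OD2.2} orientation at common $\gbullet$-endpoints. The generation step only uses that $\Delta$ is a $\gbullet$-FFAS, so its objects generate $\per(A)$. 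Neither step really needs the inverse rotations; I would state this explicitly so the argument applies to any decomposition of any $\gbullet$-FFAS. Non-triviality follows when both $D_i$ are nonempty.

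For \ref{thm:main 260724 1} $\Rightarrow$ \ref{thm:main 260724 2}, start from $\langle\calF,\calG\rangle$ and use Proposition \ref{prop:good cut}. It gives a $\gbullet$-FFAS $\Delta=D_1\sqcup D_2$ avoiding the interior of the good cut $\Omega$, with $\calF=\langle\X(D_1)\rangle$ and $\calG=\langle\X(D_2)\rangle$. Condition \ref{OD1} holds by construction. For \ref{OD2}, take $\alpha\in D_1$ and $\beta\in D_2$. They lie in distinct pieces of $\SURF_\Omega$: a piece carrying left added points versus one carrying right added points, which is forced by \ref{GC3}. Since $\Delta$ is an arc system, $\alpha$ and $\beta$ cannot cross in $\innerSurf$, so any intersection is a common $\gbullet$-endpoint $p\in\M$. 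At such a $p$, if $\alpha$ were left of $\beta$, there would be a nonzero morphism $\X(\alpha)\to\X(\beta)[t]$ for some $t$, by the morphism description preceding \ref{D1}. This contradicts $\Hom(\calF,\calG)=0$. Hence $\alpha$ is right of $\beta$ at $p$, which is \ref{OD2.2}; if there is no intersection, \ref{OD2.1} holds.

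The main obstacle is this last step. The facts \ref{D1} and \ref{D2} must be used carefully, including the shifts, since $\calF$ and $\calG$ are triangulated and closed under $[t]$. One also has to check that the curves of $\Delta$ built inside the cut pieces are genuinely admissible curves in $\SURF$, with their natural gradings, so that the Hom calculus of \cite{OPS2018} applies. I would first verify this by noting that each $\Delta_i$ arc, viewed in $\Surf$, is a $\gbullet$-arc disjoint from $\Omega$. Its morphisms in $\per(A)$ are then read off directly from endpoint angles, as in \cite{KS2022}.
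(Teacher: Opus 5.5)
Your proposal is correct and follows the same route as the paper: (2)$\Rightarrow$(1) via Proposition \ref{prop:OD}, (1)$\Rightarrow$(2) via Proposition \ref{prop:good cut} together with Theorem \ref{thm:KS2022}, and (1)$\Leftrightarrow$(3) directly from Theorem \ref{thm:KS2022}. The paper leaves two steps implicit that you spell out: you derive \ref{OD2} from $\Hom(\calF,\calG)=0$ at shared $\gbullet$-endpoints, and you note that \ref{OD1}--\ref{OD2} should be read directly on $\Delta$ without requiring inverse rotations.
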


\begin{proof}
\checks{The trivial case, i.e., the case for good cut to be an empty cut, is clear, we ignore its proof. Now we prove the non-trivial case.}
Proposition \ref{prop:OD} admits that \ref{thm:main 260724 2} yields \ref{thm:main 260724 1}.
Theorem \ref{thm:KS2022} and Proposition \ref{prop:good cut} admit that \ref{thm:main 260724 1} yields \ref{thm:main 260724 2}.
The equivalence of \ref{thm:main 260724 1} and \ref{thm:main 260724 3} holds by Theorem \ref{thm:KS2022}.
\end{proof}

\begin{example} \label{examp:gentle260802-od} \rm
Consider the gentle algebra $A$ given in Example \ref{examp:gentle260802}.
The marked surface, write as $\SURF$, of it is shown in Example \ref{examp:gentle260802-surf}.
Now, we consider another $\gbullet$-FFAS $\Delta=\{c_1,c_2,c_3,c_4,c_5\}$ of $\SURF$,
see \Pic \ref{fig:gentle260802-FFAS}.
\begin{figure}[H]
  \centering
\begin{tikzpicture}
\draw[blue!25]
  (-1.73,-1.00) -- (-1.73, 1.00) --
  ( 0.00, 2.00) -- ( 0.00,-2.00) --
  ( 1.73, 1.00) -- ( 1.73,-1.00);
\draw[red][line width=1pt] (-1.00,-1.73) -- (-2.00, 0.00);
\draw[red][line width=1pt] (-1.00,-1.73) -- (-1.00, 1.73);
\draw[red][line width=1pt] (-1.00,-1.73) -- ( 1.00, 1.73);
\draw[red][line width=1pt] ( 1.00, 1.73) -- ( 1.00,-1.73);
\draw[red][line width=1pt] ( 1.00,-1.73) -- ( 2.00, 0.00);
\draw[line width=1pt] (0,0) circle(2cm);
\foreach \x in {0,60,120,180,240,300}
\fill[blue][rotate= \x] (0,2) circle(1mm);
\foreach \x in {0,60,120,180,240,300}
\fill[white][rotate= \x] (2,0) circle(1mm);
\foreach \x in {0,60,120,180,240,300}
\draw[red][line width=0.7pt][rotate= \x] (2,0) circle(1mm);
\draw[blue][line width=0.65pt]
  (-1.73, 1.00) -- ( 0.00,-2.00) -- (-1.73,-1.00);
\draw[blue][line width=0.65pt]
  (-1.73, 1.00) -- ( 1.73, 1.00) -- ( 0.00, 2.00);
\draw[blue][line width=0.65pt]
  (-1.73, 1.00) -- ( 1.73,-1.00);
\draw[blue] (-0.30, 0.15) node[above]{$c_3$};
\draw[blue] (-0.61,-1.77) node[above]{$c_1$};
\draw[blue] (-0.28,-1.45) node[above]{$c_2$};
\draw[blue] (-0.30, 1.00) node[above]{$c_4$};
\draw[blue] ( 0.50, 1.75) node[below]{$c_5$};
\draw[blue] (-1.73, 1.00) node[ left]{$p_1$};
\draw[blue] ( 0.00,-2.00) node[below]{$p_2$};
\draw[blue] ( 1.73, 1.00) node[right]{$p_3$};
\end{tikzpicture}
  \caption{A $\gbullet$-FFAS $\Delta$ of the marked ribbon surface given in Example \ref{examp:gentle260802-surf}}
  \label{fig:gentle260802-FFAS}
\end{figure}
Let $\calC=\{c_2,c_3,c_5\}$ and $\calD=\{c_1,c_4\}$.
Then $c_2$ is right to $c_1$ at the marked point $p_1$,
$c_3$ and $c_2$ are right to $c_4$ at the marked point $p_2$,
and $c_5$ is right to $c_4$ at the marked point $p_3$.
Then, by Theorem \ref{thm:main 260724}, $\per(A)$ has a semi-orthogonal decomposition.
By Proposition \ref{prop:good cut}, this semi-orthogonal decomposition is
\[
\langle
  \langle
    \add(\X(\tc_2)\oplus\X(\tc_3)\oplus\X(\tc_5))
  \rangle_{\per(A)},
  \langle
    \add(\X(\tc_1)\oplus\X(\tc_4))
  \rangle_{\per(A)}
\rangle,
\]
where:
\begin{itemize}
  \item $\X(\tc_1)$ is the complex $P(3)\to P(2) \to P(1)$ which is quasi-isomorphic to $S(1)[0]$;
  \item $\X(\tc_2)$ is the complex $P(3) \to P(2)$ which is quasi-isomorphic to $S(2)[0]$;
  \item $\X(\tc_3)$ is the complex $P(3)\oplus P(5) \To{\left(\bsm a_3 & a_4 \\ a_2 & 0 \esm\right)} P(4)\oplus P(2)$ which is quasi-isomorphic to $({^2}{_3}{^4})[0]$;
  \item $\X(\tc_4)$ is the complex $P(3) \To{\left(\bsm a_2\\ a_3 \esm\right)} P(2)\oplus P(4)$ which is quasi-isomorphic to $({^2}{_3}{^4}{_5})[0]$;
  \item $\X(\tc_5)$ is the complex $P(3) \to P(4)$ which is quasi-isomorphic to $(^4_5)[0]$.
\end{itemize}
\end{example}

\begin{corollary} \label{coro:SOD-components-gentle}
Let $A$ be a gentle algebra and suppose that $\per(A)$ has a non-trivial semi-orthogonal decomposition $\langle \calX,\calY\rangle$.
\begin{enumerate}[label={\rm(\arabic*)}]
  \item Then there exist gentle algebras $B_{\calX}$ and $B_{\calY}$, which are not necessarily connected,
and triangle equivalences $\calX\simeq \per(B_{\calX})$ and $\calY\simeq \per(B_{\calY})$.
  \item If, in addition, $A$ has a finite global dimension, then $B_{\calX}$ and $B_{\calY}$ have finite global dimension.
Consequently, $\calX\simeq \Dcat^b(B_{\calX})$ and $\calY\simeq \Dcat^b(B_{\calY})$.
\end{enumerate}
\end{corollary}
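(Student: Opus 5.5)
We will use the good cut produced by Theorem \ref{thm:KS2022} and the construction in the proof of Proposition \ref{prop:good cut}. Let $\Omega$ be the good cut corresponding to $\langle\calX,\calY\rangle$. Cutting $\SURF$ along $\Omega$ gives marked subsurfaces $(\Surf_1,\M_1,\Y_1),\ldots,(\Surf_t,\M_t,\Y_t)$. By \ref{GC3}, each of them carries only left added or only right added marked points, so they fall into two classes $\SURF_i^{\Left}$ and $\SURF_i^{\Right}$. By Proposition \ref{prop:good cut}\ref{prop item: good cut 3}, $\calX=\langle\X(D_1)\rangle_{\per(A)}$ and $\calY=\langle\X(D_2)\rangle_{\per(A)}$, where $D_1$ (resp. $D_2$) is the union of the $\gbullet$-FFAS $\Delta_i$ of the left (resp. right) pieces.

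Each piece $\SURF_i=(\Surf_i,\M_i,\Y_i,\Delta_i,\Delta_i^{\bot})$ is a marked ribbon surface. By \ref{GC4} none of them is trivial, so each has a nonempty $\gbullet$-FFAS. By Construction \ref{construction} and Remark \ref{rmk:FFAS}, the algebra $A_i:=A(\SURF_i)$ is a gentle algebra whose perfect derived category is modelled by graded admissible curves on $\SURF_i$. We set $B_{\calX}:=\prod_{\SURF_i \text{ left}}A_i$ and $B_{\calY}:=\prod_{\SURF_i\text{ right}}A_i$; these are gentle but possibly disconnected.

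The main step, and the expected obstacle, is to build a triangle equivalence $\per(B_{\calX})\simeq\calX$. Our approach is to take $T:=\bigoplus_{\tvarsig\in D_1}\X(\tvarsig)$, viewed as a compact generator of $\calX$, and to show that $\End_{\per(A)}^{\bullet}(T)$ is formal and isomorphic to $B_{\calX}$ concentrated in degree $0$. Then a standard Keller-type argument gives $\calX=\langle T\rangle_{\per(A)}\simeq\per(\End(T))=\per(B_{\calX})$.

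The morphism computation proceeds in four steps.
\begin{itemize}
\item Because the arcs of $D_1$ pairwise meet only at $\gbullet$-marked points, morphisms between the $\X(\tvarsig)$ come only from boundary intersections, by the description recalled before \ref{D1}.
\item With the natural gradings of the arcs in $\Delta_i$ (Definition \ref{def:perm and adm curve}), these morphisms all sit in degree $0$.
\item They correspond exactly to the arrows of Construction \ref{construction} for $\SURF_i$. Composition vanishes precisely for the relations of Step 3, i.e., for consecutive arcs around a polygon corner.
\item Arcs in different pieces $\SURF_i,\SURF_j$ of the same class share no marked point, since by \ref{GC2} the dividing curves are disjoint. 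Hence the endomorphism algebra splits as the product of the $A_i$.
\end{itemize}
If the grading/formality check proves delicate, we would instead cite the HKK/OPS theorem (Remark \ref{rmk:FFAS}) that the topological Fukaya category of a graded marked surface computed from any FFAS is $\per$ of the associated gentle algebra. We would then argue that the inclusion $\SURF_i\hookrightarrow\SURF$ identifies curves and intersections of $\SURF_i$ with those of $\SURF$ lying in $\Surf_i$, so it induces a fully faithful functor. The same argument with $D_2$ gives $\calY\simeq\per(B_{\calY})$.

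For (2), assume $\gldim A<\infty$. By the OPS characterisation, $A$ has infinite global dimension exactly when $\SURF$ has a boundary component with no $\gbullet$-marked point, which gives $\infty$-elementary polygons and oriented cycles with full relations. We will check that cutting along dividing curves creates no new boundary component without $\gbullet$-points. Each new boundary segment comes from some $\omega\in\Omega$ and contains an added $\gbullet$-point $b_\omega^{\Left}$ or $b_\omega^{\Right}$, and the old boundary components without $\gbullet$-points are unchanged. Hence each $A_i$, and so $B_{\calX}$ and $B_{\calY}$, has finite global dimension. Then $\per(B)=\Dcat^b(B)$ gives the last assertion.
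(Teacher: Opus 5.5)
\textbf{The gap.} The step that fails is your second bullet. Reshifting the objects $\X(\tvarsig)$, $\tvarsig\in\Delta_i$, never changes the signed sum of the degrees of the endpoint morphisms around a cycle of the quiver of $\Delta_i$. That sum is dictated by the line field of $A$ restricted to $\Surf_i$, and this is in general not the line field of the ungraded algebra $A(\SURF_i)$. So $\End^\bullet(T)$ need not be concentrated in degree $0$, for any choice of $\Delta_i$.

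\textbf{Counterexample.} In fact part (1) is false for ordinary gentle algebras. Let $A=\kk(1\xrightarrow{a}2\xrightarrow{b}3\xrightarrow{c}1)/(ab,bc)$, which is gentle of global dimension $3$, and let $P_i$ be spanned by the paths starting at $i$. The resolution $0\to P_1\to P_3\to P_2\to S_2\to 0$ gives $\Hom^\bullet(P_1,S_2)=0$ and $\Hom^\bullet(S_2,P_1)=\kk\oplus\kk[-2]$. Hence $(S_2,P_1)$ is an exceptional pair, and $\calX=\langle S_2,P_1\rangle$ is a component of a non-trivial semi-orthogonal decomposition of $\per(A)$. Moreover $\calX$ is $\per$ of the graded Kronecker algebra with arrows in degrees $0$ and $2$, with Euler form $(x+y)^2$.

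Since $\mathrm{gl.dim}\,A<\infty$, the corollary would give a gentle $B_{\calX}$ with two vertices and finite global dimension. The only candidates are $\kk A_2$, the Kronecker algebra, $\kk(1\rightleftarrows 2)/(ab)$, and the three-arrow algebra with two relations. Their Euler forms are $x^2+xy+y^2$, $(x+y)^2$, $(x+y)^2+y^2$ and $2(x+y)^2$, so only the Kronecker algebra survives. It is excluded too, because every exceptional pair in $\Dcat^b(\mathbb{P}^1)$ has $\Hom^\bullet$ concentrated in a single degree.

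\textbf{The fallback.} Citing HKK/OPS and gluing does not repair this. It is precisely the paper's argument (the gluing functors $\Phi_i\colon\per(A(\SURF_i))\to\per(A)$), and the paper overlooks the same point. HKK/OPS describe $\per(A(\SURF_i))$ by curves on $\Surf_i$ graded with respect to the line field that puts $\Delta_i$ in degree $0$. Curves in $\Surf_i\subset\Surf$, however, are graded with respect to the restriction of the line field of $A$. When these two line fields are not homotopic, intersection points do not keep their degrees under gluing, and the example shows that then no choice of $\Delta_i$ makes $\Phi_i$ exist.

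\textbf{What your argument does prove.} Replace the second bullet by ``the degrees are those determined by the restricted line field''. Your four-step computation then gives $\calX\simeq\per(B_{\calX})$ with $B_{\calX}=\prod A(\SURF_i)$ regarded as a graded gentle algebra; formality is automatic for a full formal arc system. This graded version is the correct statement.

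For (2), your argument that cutting along $\Omega$ creates no boundary component without $\gbullet$-points is a legitimate alternative to the paper's appeal to smoothness of admissible subcategories. In the corrected setting, however, it yields homological smoothness of these graded algebras rather than finite global dimension of ordinary ones.
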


\begin{proof}
Let $\SURF$ be the marked ribbon surface of $A$.
By Theorem \ref{thm:KS2022}, the semi-orthogonal decomposition $\langle\calX,\calY\rangle$ corresponds to a good cut $\Omega$ of
$\SURF$.
Write the cut surface as a disjoint union of its connected components:
\[\SURF_{\Omega} = \bigsqcup_{i\in I}\SURF_i.\]
By condition \ref{GC3} in Definition \ref{def:good cut}, no connected component of $\SURF_{\Omega}$ contains both a left added marked point and a right added marked point. Hence there is a disjoint union
decomposition $I=I_{\calX}\sqcup I_{\calY}$, where the components indexed by $I_{\calX}$ and $I_{\calY}$ correspond, respectively, to the two terms $\calX$ and $\calY$ of the given semi-orthogonal decomposition.

For every $i\in I$, choose a $\gbullet$-FFAS $\Delta_i$ of $\SURF_i$ and let $B_i:=A(\SURF_i)$ be the gentle algebra associated with the resulting marked ribbon surface by Construction \ref{construction}.
Let \[ B_{\calX}:=\prod_{i\in I_{\calX}}B_i \quad\text{and}\quad B_{\calY}:=\prod_{i\in I_{\calY}}B_i.\]
A finite product of gentle algebras is again a gentle algebra if
disconnected bound quivers are allowed. Moreover,
\[
   \per(B_{\calX})
      \simeq\bigoplus_{i\in I_{\calX}}\per(B_i),
   \qquad
   \per(B_{\calY})
      \simeq\bigoplus_{i\in I_{\calY}}\per(B_i).
\]

We prove the assertion for $\calX$; the argument for $\calY$ is identical.
For each $i\in I_{\calX}$, the geometric model for $\per(B_i)$ identifies its indecomposable objects with graded curves in $\SURF_i$.
Regarding such a curve as a curve in $\SURF$ after gluing the cut surface back along $\Omega$ gives a triangle functor
\[ \Phi_i:\per(B_i)\longrightarrow\per(A). \]
This functor is fully faithful.
Indeed, the basis morphisms between objects represented by graded curves
are determined by their intersection points and by the local order of the curves at their common marked endpoints.
For two curves lying in the same component $\SURF_i$, these data are unchanged when $\SURF_i$ is glued back into $\SURF$.
The geometric description of compositions is local as well, and the smoothing of intersections, which describes mapping cones, is preserved under the gluing. Therefore, the correspondence on graded curves induces a fully faithful triangle functor $\Phi_i$.
If $i,j\in I_{\calX}$ and $i\ne j$, then the images of $\Phi_i$ and $\Phi_j$ are mutually orthogonal, i.e.,
\begin{align*}
  & \Hom_{\per(A)}(\Ima(\Phi_i), \Ima(\Phi_j)[n])=0 \text{~for all~} n\in\ZZ,\\
\text{and} \,
  & \Hom_{\per(A)}(\Ima(\Phi_j), \Ima(\Phi_i)[n])=0 \text{~for all~} n\in\ZZ.
\end{align*}
Indeed, the two sides of each cutting curve belong to components of different types: one contains left added marked points and the other contains right added marked points.
Hence two components indexed by $I_{\calX}$ are not glued directly to each other.
Moreover, by condition \ref{GC2}, different cutting curves have no common endpoints.
Consequently, curves contained in two distinct components indexed by $I_{\calX}$ acquire neither intersection points nor common marked endpoints after gluing.
The geometric description of morphisms therefore gives the two vanishing statements above.
It follows that the functors $\Phi_i$, $i\in I_{\calX}$, induce a fully faithful triangle functor
\[ \Phi_{\calX}: \per(B_{\calX})\longrightarrow\per(A). \]

Let $D_{\calX}:=\bigcup_{i\in I_{\calX}}\Delta_i$.
The essential image of $\Phi_{\calX}$ is the full triangulated subcategory generated by the objects represented by the curves in $D_{\calX}$, i.e., $\Ima(\Phi_{\calX}) = \langle \X(D_{\calX})\rangle_{\per(A)}$.
On the other hand, by Proposition \ref{prop:good cut}, the semi-orthogonal component associated with the components indexed by
$I_{\calX}$ is precisely $\calX= \langle \X(D_{\calX})\rangle_{\per(A)}$.
Therefore, $\Phi_{\calX}$ induces a triangle equivalence $\per(B_{\calX})\simeq\calX$.
Similarly, one obtains $\per(B_{\calY})\simeq\calY$.

Finally, suppose that $A$ has finite global dimension.
Then $\per(A)$ is homologically smooth.
Homological smoothness is preserved under the triangle equivalences constructed above.
Since $\calX$ and $\calY$ are admissible subcategories of $\per(A)$,
they are also homologically smooth.
The above equivalences imply that $B_{\calX}$ and $B_{\calY}$ are homologically smooth.
For a finite dimensional gentle algebra, homological smoothness is equivalent to finite global dimension.
Hence $\gldim B_{\calX}<\infty$ and $\gldim B_{\calY}<\infty$.
Consequently, $\per(B_{\calX})\simeq\Dcat^b(B_{\calX})$ and $\per(B_{\calY})\simeq\Dcat^b(B_{\calY})$, which completes the proof.
\end{proof}

\section{Derived composition for gentle algebras}
\label{sect:derivedcompos}

This subsection aims to show under what conditions the module category of a gentle algebra admits a derived decomposition. For this purpose, we consider the canonical embedding functor from the module category to the perfect derived category. Lemma \ref{lemm:FGamma} explains how this functor can be interpreted via the geometric model, and Proposition \ref{prop:FGamma} establishes its fully faithfulness.

\subsection{Filtrations and rotation functors}

Let $\calA$ be an Abelian category and $X$ be an object in $\calA$.
For a full subcategory $\langle \calS:=\add X \rangle_{\calA}$ of an Abelian category $\calA$,
any object $M\in\langle \calS \rangle_{\calA}$ has a filtration
\[ 0=M_0\subseteq M_1\subseteq\cdots\subseteq M_r=M \]
with $M_i/M_{i-1}\in\mathcal S$, $1\leq i\leq r)$, i.e,
let $\operatorname{Filt}(\calS)$ be the minimal full subcategory of $\calA$
containing all objects which have $\calS$-filtrations (equivalently,
$\operatorname{Filt}(\calS) :=\{M\in\calA:$ there exists $0=M_0\subseteq M_1 \subseteq \cdots
\subseteq M_r=M$ such that $M_i/M_{i-1}\in\calS$ holds for all $1\=< i \=< r$$\}$),
then we have a well-known fact as follows.

\begin{lemma} \label{lemm:filtration}
$\operatorname{Filt}(\calS) = \langle \calS \rangle_{\calA}.$
\end{lemma}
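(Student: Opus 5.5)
We prove the two inclusions $\operatorname{Filt}(\calS)\subseteq\langle\calS\rangle_{\calA}$ and $\langle\calS\rangle_{\calA}\subseteq\operatorname{Filt}(\calS)$ separately. The first is an induction on the length $r$ of an $\calS$-filtration; the second follows from the minimality of $\langle\calS\rangle_{\calA}$ once we check that $\operatorname{Filt}(\calS)$ contains $\calS$ and is closed under extensions.

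For the first inclusion, let $M$ have a filtration $0=M_0\subseteq\cdots\subseteq M_r=M$ with $M_i/M_{i-1}\in\calS$. We show by induction on $i$ that $M_i\in\langle\calS\rangle_{\calA}$. The case $i=0$ is the zero object. Here we adopt the convention that $0\in\langle\calS\rangle_{\calA}$, which holds because $\calS=\add X$ contains $0$ as the empty direct sum. For the induction step, the short exact sequence $0\to M_{i-1}\to M_i\to M_i/M_{i-1}\to 0$ has its outer terms in $\langle\calS\rangle_{\calA}$. Extension-closure then gives $M_i\in\langle\calS\rangle_{\calA}$.

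For the second inclusion, first note $\calS\subseteq\operatorname{Filt}(\calS)$, since each $S\in\calS$ has the length-one filtration $0\subseteq S$. It remains to show $\operatorname{Filt}(\calS)$ is closed under extensions. Take an exact sequence $0\to X'\xrightarrow{f} Y\xrightarrow{g} Z\to 0$ with filtrations $0=X_0\subseteq\cdots\subseteq X_r=X'$ and $0=Z_0\subseteq\cdots\subseteq Z_s=Z$. Define a filtration of $Y$ by
\[
0=f(X_0)\subseteq\cdots\subseteq f(X_r)=f(X')=g^{-1}(Z_0)\subseteq g^{-1}(Z_1)\subseteq\cdots\subseteq g^{-1}(Z_s)=Y .
\]
The subquotients of the first part are $f(X_i)/f(X_{i-1})\cong X_i/X_{i-1}$, because $f$ is a monomorphism. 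The subquotients of the second part are $g^{-1}(Z_j)/g^{-1}(Z_{j-1})\cong Z_j/Z_{j-1}$. This follows from the third isomorphism theorem, since $g$ is an epimorphism and each $g^{-1}(Z_j)\supseteq\ker g$; it is valid in any Abelian category using pullbacks of subobjects. Hence $Y\in\operatorname{Filt}(\calS)$. By minimality of $\langle\calS\rangle_{\calA}$, we conclude $\langle\calS\rangle_{\calA}\subseteq\operatorname{Filt}(\calS)$.

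The only step needing care is this concatenation of filtrations, namely checking the subquotient isomorphisms for preimages in a general Abelian category. The rest is formal.
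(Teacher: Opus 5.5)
Your proof is correct and follows the paper's argument. The paper proves extension-closure of $\operatorname{Filt}(\calS)$ by the same concatenated filtration $X_0\subseteq\cdots\subseteq X_r\subseteq\pi^{-1}(Z_1)\subseteq\cdots\subseteq\pi^{-1}(Z_s)$, and the reverse inclusion by the same induction on the filtration length via $0\to M_{r-1}\to M\to M/M_{r-1}\to 0$. The paper leaves implicit the points you state explicitly, namely that $\calS\subseteq\operatorname{Filt}(\calS)$ and that $0\in\add X$ (the paper starts its induction at $r=1$ instead).
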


\begin{proof}
Indeed, on the one hand, we have that $\operatorname{Filt}(\calS)$ is closed under extension
since any short exact sequence
$0 \to X \To{\iota} Y \To{\pi} Z \to 0$ with $X$, $Z\in \operatorname{Filt}(\calS)$ induces a filtration
\[ 0 =X_0 \subseteq X_1 \subseteq \cdots \subseteq X_r = X = Y_0
  \subseteq Y_1 \subseteq \cdots \subseteq Y_s=Y \]
of $Y$, where $r,s\in\NN$, and $Y_i = \pi^{-1}(Z_i)$ ($1\=< i\=< s$) is given by the filtration
$0=Z_0\subseteq Z_1 \subseteq \cdots \subseteq Z_s=Z$ of $Z$
(note that we have $Y_i/Y_{i-1} \cong Z_i/Z_{i-1} \in \calS$ by isomorphism theorem).
Thus, $\langle \calS \rangle_{\calA} \subseteq \operatorname{Filt}(\calS)$.

On the other hand, each $M\in\operatorname{Filt}(\calS)$ has an $\calS$-filtration
\[0=M_0\subseteq M_1\subseteq\cdots\subseteq M_r=M,
\quad M_i/M_{i-1}\in\calS.\]
When $r=1$, we have $M=M_1/M_0\in\calS
\subseteq\langle\calS\rangle_{\calA}$.
If $r>1$, we assume $M_{r-1}\in\langle\calS\rangle_{\calA}$.
Since $M/M_{r-1}\in\calS\subseteq\langle\calS\rangle_{\calA}$,
the following short exact sequence
\[0\to  M_{r-1}\to M \to M/M_{r-1}\to 0 \]
yields that $M \in \langle \calS\rangle_{\calA}$ by induction, and then we have $\operatorname{Filt}(\calS) \subseteq \langle \calS \rangle_{\calA}$.
\end{proof}

\begin{lemma}\label{lemm:emb curve}
Let $A$ be a gentle algebra and $\SURF$ be its marked ribbon surface.
For a permissible curve $c\in\PC{\SURF}$ and an admissible curve $\tvarsig\in\AC(\SURF)$,
if $\tc^{\rota} \simeq \tvarsig$ holds for some grading $\tc^{\rota}$ of $c^{\rota}$,
then $\MM(c) \mapsto \shift{n}{\X(\tvarsig)}$ admits a functor
\[ F_c : \add(\MM(c)) \to \langle\X(\tvarsig)\rangle_{\Dcat^b(A)} = \langle\X(\tc^{\rota})\rangle_{\Dcat^b(A)},\]
where $\shift{n}{\X(\tvarsig)}$ is quasi-isomorphic to $\MM(c)$ for some $n\in\ZZ$,
and $\langle\X(\tvarsig)\rangle_{\Dcat^b(A)}$ is the minimal full triangulated subcategory of $\Dcat^b(A)$ containing $\X(\tvarsig)$.
Furthermore, if the global dimension of $A$ is finite, then $F_c$ is of the form
\[ F_c : \add(\MM(c)) \to \langle\X(\tc^{\rota})\rangle_{\per(A)}.\]
\end{lemma}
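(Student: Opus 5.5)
The plan is to build $F_c$ as the restriction of the canonical embedding $\modcat(A)\to\Dcat^b(A)$, which sends a module to the stalk complex in degree $0$, to the subcategory $\add(\MM(c))$. The only real work is to identify its values with shifts of $\X(\tvarsig)$.

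First I will fix the grading. By Theorem \ref{thm:Chang}, the graded admissible curve $\tc^{\rota}$ satisfies $\H^0(\X(\tc^{\rota}))\cong\MM(c)$ and $\H^i(\X(\tc^{\rota}))=0$ for $i\neq 0$. A complex with cohomology concentrated in a single degree is quasi-isomorphic to that cohomology placed in that degree, so $\X(\tc^{\rota})\cong\MM(c)[0]$ in $\Dcat^b(A)$. The hypothesis $\tc^{\rota}\simeq\tvarsig$ says the two admissible curves share the underlying curve $c^{\rota}$. Two gradings on the same curve differ by a constant integer shift, because the grading is determined by its value at one point via the rule (2a.3). Hence there is $n\in\ZZ$ with $\X(\tc^{\rota})\cong\shift{n}{\X(\tvarsig)}$, and so $\shift{n}{\X(\tvarsig)}$ is quasi-isomorphic to $\MM(c)$; Remark \ref{rmk:rota} records the same fact. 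Since triangulated subcategories are closed under shifts, this also gives $\langle\X(\tvarsig)\rangle_{\Dcat^b(A)}=\langle\X(\tc^{\rota})\rangle_{\Dcat^b(A)}$.

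Next I define $F_c$ on $\add(\MM(c))$. On objects it sends $\MM(c)^{\oplus k}$ to $(\shift{n}{\X(\tvarsig)})^{\oplus k}$. On morphisms it transports a map of modules through the canonical embedding $\modcat(A)\to\Dcat^b(A)$, $M\mapsto M[0]$, composed with the fixed isomorphism $\MM(c)[0]\cong\shift{n}{\X(\tvarsig)}$ from the previous step. This is a composite of functors, so it is a functor. Its image lies in $\langle\X(\tvarsig)\rangle_{\Dcat^b(A)}$, because that subcategory is triangulated and hence closed under shifts and finite direct sums.

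Finally, suppose $\gldim A<\infty$. Then every bounded complex of modules has a bounded projective resolution, so $\per(A)=\Dcat^b(A)$. Moreover $\X(\tc^{\rota})$ is by construction a string complex of projectives, hence already lies in $\per(A)$. It follows that the smallest triangulated subcategories generated by $\X(\tc^{\rota})$ in $\Dcat^b(A)$ and in $\per(A)$ coincide, which gives the stated form of $F_c$.

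The main obstacle is the claim that $\tc^{\rota}\simeq\tvarsig$ forces the two gradings to differ only by a global shift. This needs the curve to be connected and the grading to be determined by an initial value through (2a.3); I would check it directly from that recursive definition. Everything else is formal.
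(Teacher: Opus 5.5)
Your proposal is correct and follows essentially the same route as the paper. Both arguments use Theorem \ref{thm:Chang} (cf.\ Remark \ref{rmk:rota}) to identify $\MM(c)$ with a shift of $\X(\tvarsig)$, define $F_c$ on $\add(\MM(c))$ through the canonical embedding of modules into $\Dcat^b(A)$, and invoke $\per(A)\simeq\Dcat^b(A)$ when $\gldim A<\infty$. The paper realizes that embedding by lifting module maps to chain maps between projective resolutions, which is the same functor under $\Dcat^b(A)\simeq K^{-,b}(\proj(A))$. Your explicit check that two gradings of $c^{\rota}$ differ by a constant, together with the standard fact that $\X$ turns a grading shift into the shift functor, supplies the step producing the integer $n$, which the paper leaves implicit.
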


\begin{proof}
Since $\MM(c)$ is indecomposable, we obtain that $\add(\MM(c)) = \{\MM(c)^{\oplus\upsilon} : \upsilon\in\NN\}$,
and each morphism in $\add(\MM(c))$ is of the form
$(f_{ij})_{\upsilon_2\times \upsilon_1}: \MM(c)^{\oplus \upsilon_1} \to \MM(c)^{\oplus \upsilon_2}$,
where every $f_{ij}$ are endomorphism lying in $\End_{A}(\MM(c))$
($A$ is the gentle algebra of the marked ribbon surface of $\SURF$).
By Theorem \ref{thm:Chang} and Proposition \ref{prop:rota} (or by Remark \ref{rmk:rota}),
for any $\imath\in\{1,2\}$, there exists an integer $n_{\imath} \in\ZZ$ such that
the projective resolution of $\MM(c)^{\oplus \upsilon_{\imath}}$
is $\shift{n_{\imath}}{\X(\tc^{\rota})}^{\oplus \upsilon_{\imath}}$,
then the homomorphism $(f_{ij})_{\upsilon_2\times \upsilon_1}$ of modules induces
a homomorphism of complexes from $\shift{n_1}{\X(\tc^{\rota})}^{\oplus \upsilon_1}$
to $\shift{n_2}{\X(\tc^{\rota})}^{\oplus \upsilon_2}$.
It admits a functor, i.e., $F_c$, from $\add(\MM(c))$ to $\langle\X(\tc^{\rota})\rangle_{\Dcat^b(A)}$ clearly.
In particular, if the global dimension of $A$ is finite, then we have $\per(A)\simeq K^b(\proj(A)) \simeq \Dcat^b(A)$.
Then we have $\langle\X(\tc^{\rota})\rangle_{\Dcat^b(A)}=\langle\X(\tc^{\rota})\rangle_{\per(A)}$ as required.
\end{proof}

Let $\Gamma$ be a set of some permissible curves. We call it is a \defines{generalized dissection} of $\SURF$
if arbitrary two permissible curves $c_1, c_2 \in\Gamma$ has no intersection lying in $\innerSurf$.
In this case, for any $c_1, c_2\in\Gamma$, we have
\[ \dim \big(\Hom_A(\MM(c_1),\tau\MM(c_2)) \oplus \Hom_A(\MM(c_2),\tau\MM(c_1))\big) = 0,  \]
and then, for the direct sum $T=\bigoplus_{c\in\Gamma}\MM(c)$, we have
\[ \Hom_A(T,\tau T)=0, \]
i.e., $T$ is $\tau$-rigid. See \cite{HZZ2023}. In particular, if $\Gamma$ is a maximal generalized dissection,
then $T$ is a support $\tau$-tilting module.
An \defines{admissible dissection} $\Sigma$ of $\Surf$ is a set of some admissible curves such that
any two admissible curves $\tvarsig_1, \tvarsig_2\in\Sigma$ has no intersection lying in $\innerSurf$.
For any generalized dissection $\Gamma = \{c_i\in\PC(\SURF): i\in I\}$ ($I$ is a finite index set),
we have that $\Gamma^{\rota}:=\{\tc_i^{\rota}\in\AC(\SURF):c_i\in \Gamma\}$ is an admissible dissection.
Here, each $\tc^{\rota}$ corresponds to the complex $\X(\tc^{\rota})$ which is quasi-isomorphic to $\MM(c)$.
In particular, if $A$ is hereditary, and $\Gamma$ is maximal, then, by \cite[Proposition 3.16 and Theorem 4.11]{AMV2016},
the direct sum $\X(\Gamma^{\rota}) := \bigoplus_{i\in I} \X(\tc_i^{\rota})$ is a 2-term silting complex.

\begin{lemma} \label{lemm:FGamma}
Let $A$ be a gentle algebra and $\SURF$ be its marked ribbon surface.
Let $\Gamma$ be a collection of some permissible curves. Then there is a functor as follows
\[ F_{\Gamma}: \langle \add(\MM(\Gamma)) \rangle_{\modcat(A)} \to \langle \X(\Gamma^{\rota}) \rangle_{\Dcat^b(A)}\]
\checks{sending each string module $\MM(\gamma)$ corresponding to the permissible curve $\gamma$ to the projective complex $\X(\gamma^{\rota})[n_{\gamma}]$, where $n_{\gamma}$ is an integer such that $\X(\gamma^{\rota})[n_{\gamma}]$ corresponds to the projective resolution of $\MM(\gamma)$.}
\end{lemma}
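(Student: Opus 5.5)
The natural route is to realize $F_{\Gamma}$ as a restriction of the canonical embedding $\iota:\modcat(A)\to\Dcat^b(A)$, which sends a module to the stalk complex concentrated in degree $0$ (equivalently, to its projective resolution in $\mathds{K}^{-,b}(\proj(A))\simeq\Dcat^b(A)$). I would first treat the generators. For each $\gamma\in\Gamma$, Theorem \ref{thm:Chang} and Remark \ref{rmk:rota} give an integer $n_{\gamma}$ with $\shift{n_{\gamma}}{\X(\tilde\gamma^{\rota})}$ a projective resolution of $\MM(\gamma)$. Hence $\iota(\MM(\gamma))\cong\shift{n_{\gamma}}{\X(\tilde\gamma^{\rota})}$, and this object lies in $\langle\X(\Gamma^{\rota})\rangle_{\Dcat^b(A)}$ because that subcategory is triangulated, hence closed under shifts. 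Lemma \ref{lemm:emb curve} already identifies the restriction of $\iota$ to $\add(\MM(\gamma))$ with $F_{\gamma}$. Finite direct sums of the $\MM(\gamma)$ are then handled by additivity of $\iota$ and closure of the triangulated subcategory under direct sums. This shows that $\iota$ maps $\add(\MM(\Gamma))$ into the target.

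The second step extends this to the extension closure. By Lemma \ref{lemm:filtration}, every $M\in\langle\add(\MM(\Gamma))\rangle_{\modcat(A)}$ has a filtration $0=M_0\subseteq\cdots\subseteq M_r=M$ with subquotients in $\add(\MM(\Gamma))$. I would induct on $r$. Each short exact sequence $0\to M_{r-1}\to M_r\to M_r/M_{r-1}\to 0$ gives, through $\iota$, a distinguished triangle
\[\iota(M_{r-1})\to\iota(M_r)\to\iota(M_r/M_{r-1})\to\shift{1}{\iota(M_{r-1})}\]
in $\Dcat^b(A)$. The outer terms lie in $\langle\X(\Gamma^{\rota})\rangle_{\Dcat^b(A)}$, by the induction hypothesis and by the first step respectively. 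Since a triangulated subcategory is closed under cones (two out of three), $\iota(M)$ lies there as well.

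I would then define $F_{\Gamma}$ as the restriction of $\iota$ to $\langle\add(\MM(\Gamma))\rangle_{\modcat(A)}$, with the target corestricted to $\langle\X(\Gamma^{\rota})\rangle_{\Dcat^b(A)}$. On morphisms $F_{\Gamma}$ is just $\iota$, so functoriality is automatic. By construction it sends $\MM(\gamma)$ to $\shift{n_{\gamma}}{\X(\gamma^{\rota})}$, after replacing the stalk complex by its projective resolution via the quasi-isomorphism.

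The main obstacle is to make sure the target is correctly identified rather than merely $\Dcat^b(A)$. The concern is that the integers $n_{\gamma}$ might differ for different $\gamma$, and that the gradings of the curves in $\Gamma^{\rota}$ might not match the shifts occurring in the triangles. This is resolved by using the triangulated closure, which absorbs all shifts. A second subtlety, when $\gldim A=\infty$, is that the projective resolution is unbounded. Here I would work in $\mathds{K}^{-,b}(\proj(A))\simeq\Dcat^b(A)$, as the statement does, rather than in $\per(A)$.
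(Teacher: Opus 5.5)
Your proposal is correct and follows essentially the paper's argument. The generators are placed in the target via Theorem \ref{thm:Chang} and Lemma \ref{lemm:emb curve}. The extension closure is then handled by induction on the length of a filtration from Lemma \ref{lemm:filtration}, using the triangles induced by short exact sequences. Finally, $F_{\Gamma}$ is the canonical embedding realized through projective resolutions.

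The paper also sketches the geometry of extensions of two string modules (the curve $c_N$) and defers the infinite projective dimension case to Remark \ref{rmk:FGamma}. Your uniform argument in $\mathds{K}^{-,b}(\proj(A))\simeq\Dcat^b(A)$ needs neither of these.
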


\begin{proof}
For any $c\in\Gamma$, we have $\MM(c)\in \langle \add(\MM(\Gamma)) \rangle_{\modcat(A)}$,
then Lemma \ref{lemm:emb curve} shows that the projective complex
given by the admissible curve $\tc^{\rota}$ and some integer $n_c\in\ZZ$
is quasi-isomorphic to $\MM(c)$, and provides a correspondence $F_c: \MM(c)\mapsto \shift{n_c}{\X(\tc^{\rota})}$.

Take $c_1\ne c_2\in\Gamma$. For $i\in\{1,2\}$, suppose that the two elementary $\gbullet$-polygons, one containing the segment $(c_i)_{(0,1)}$ and the other containing the segment $(c_i)_{(m(c_i),m(c_i)+1)}$, are not $\infty$-elementary $\gbullet$-polygons.
\checks{Then, in this case, we have $\pdim \MM(c) < \infty$, see \cite{LGH2024}.}
If at least one of them is an $\infty$-elementary polygon, then $\pdim \MM(c) =\infty$, 
\checks{we consider these cases in Remark \ref{rmk:FGamma}.}
Let $N$ be an $A$-module which is an extension of $\MM(c_1)$ and $\MM(c_2)$,
then we obtain two cases as follows:
\begin{itemize}
  \item[1$^{\mathrm{st}}$] this extension is trivial, i.e., $N\cong \MM(c_1)\oplus\MM(c_2)$;
  \item[2$^{\mathrm{nd}}$] this extension is non-trivial, in this case, we have $c_1\cap c_2\cap\innerSurf\ne \emptyset$
    and $N$ corresponds to either a permissible curve $c_N$ or two permissible curve $c_{N,1}$ and $c_{N,2}$.
\end{itemize}

In the 1$^{\mathrm{st}}$ case, $F_{c_1}$ and $F_{c_2}$ induces a correspondence $N\mapsto F_{c_1}(\MM(c_1))\oplus F_{c_2}(\MM(c_2))$.

Now we consider the 2$^{\mathrm{nd}}$ case, and assume that $N$ corresponds to a permissible curve $c_N$
(we can consider the case for $N$ corresponding to two permissible curves $c_{N,1}$ and $c_{N,2}$ in a similar way).
Without loss of generality, we assume that $\MM(c_1)$ is isomorphic to a submodule of $N$,
then the positional relationship of $c_1$, $c_2$ and $c_3$ is shown in \Pic \ref{fig:extof perm curv}
(the number of edges of all starting and ending $\gbullet$-polygons of $c_1$, $c_2$ and $c_3$
are greater than or equal to $3$ in \Pic \ref{fig:extof perm curv}.
This proof only proves this situation, and the proof is similar for cases where 2-gons).

\begin{figure}[H]
  \centering
\begin{tikzpicture}
\fill[black!25] ( 7.00,-1.00) -- ( 6.00,-2.00) -- ( 6.25,-2.00) -- ( 7.00,-1.25);
\fill[black!25] (-5.00, 1.00) -- (-7.00, 0.50) -- (-7.00, 0.70) -- (-5.82, 1.00);
\fill[black!25] (-1.00,-2.00) -- (-0.00,-2.00) -- (-0.00,-2.20) -- (-1.00,-2.20);
\draw[ blue][line width=0.72pt] ( 0.00, 1.00) -- (-3.00,-2.00);
\draw[ blue][line width=0.72pt] ( 0.00, 1.00) -- ( 3.00,-2.00);
\draw[ blue][line width=0.72pt] ( 0.00, 1.00) -- (-4.00,-2.00);
\draw[ blue][line width=0.72pt] ( 0.00, 1.00) -- ( 4.00,-2.00);
\draw[ blue][line width=0.72pt] ( 0.00,-2.00) arc(140:35:0.65);
\draw[ blue][line width=0.72pt] ( 1.00,-2.00) arc(140:35:0.65)[dotted];
\draw[ blue][line width=0.72pt] ( 2.00,-2.00) arc(140:35:0.65);
\draw[ blue][line width=0.72pt] (-2.00,-2.00) arc(140:35:0.65);
\draw[ blue][line width=0.72pt] (-3.00,-2.00) arc(140:35:0.65)[dotted];
\draw[ blue][line width=0.72pt] (-2.00, 1.00) -- (-6.00,-2.00);
\draw[ blue][line width=0.72pt] ( 2.00, 1.00) -- ( 6.00,-2.00);
\draw[ blue][line width=0.72pt] (-2.00, 1.00) -- (-5.00, 1.00);
\draw[black][line width=0.72pt] (-5.00, 1.00) -- (-7.00, 0.50)[dotted];
\draw[ blue][line width=0.72pt] (-7.00, 0.50) -- (-7.00,-1.00);
\draw[ blue][line width=0.72pt] ( 2.00, 1.00) -- ( 5.00, 1.00);
\draw[ blue][line width=0.72pt] ( 5.00, 1.00) -- ( 7.00, 0.50)[dotted];
\draw[ blue][line width=0.72pt] ( 7.00, 0.50) -- ( 7.00,-1.00);
\draw[black][line width=0.72pt] (-1.00,-2.00) -- (-0.00,-2.00);
\draw[ blue][line width=0.72pt] (-7.00,-1.00) -- (-6.00,-2.00)[dotted];
\draw[black][line width=0.72pt] ( 7.00,-1.00) -- ( 6.00,-2.00)[dotted];
\fill[blue]
  ( 0.00, 1.00) circle(2pt) ( 0.00,-2.00) circle(2pt)
  (-1.00,-2.00) circle(2pt) ( 1.00,-2.00) circle(2pt)
  (-2.00,-2.00) circle(2pt) ( 2.00,-2.00) circle(2pt)
  (-3.00,-2.00) circle(2pt) ( 3.00,-2.00) circle(2pt)
  (-4.00,-2.00) circle(2pt) ( 4.00,-2.00) circle(2pt)
  (-7.00,-1.00) circle(2pt) ( 7.00,-1.00) circle(2pt)
  (-7.00, 0.50) circle(2pt) ( 7.00, 0.50) circle(2pt)
  (-5.00, 1.00) circle(2pt) ( 5.00, 1.00) circle(2pt)
  (-2.00, 1.00) circle(2pt) ( 2.00, 1.00) circle(2pt);
\fill[red] (-0.50,-2.00) circle(2pt); \fill[white] (-0.50,-2.00) circle(1.55pt);
\fill[red] (-6.00, 0.75) circle(2pt); \fill[white] (-6.00, 0.75) circle(1.55pt);
\fill[red] ( 6.50,-1.50) circle(2pt); \fill[white] ( 6.50,-1.50) circle(1.55pt);
\draw[orange][line width=1pt] (-2.00,-2.00) to[out=  50,in= 180] ( 3.00,-0.34)[->];
\draw[orange][line width=1pt] ( 3.00,-0.34) to[out=   0,in= 270] ( 5.00, 1.00);
\draw[orange][line width=1pt] ( 2.00,-2.00) to[out= 130,in=   0] (-3.00,-0.34)[->];
\draw[orange][line width=1pt] (-3.00,-0.34) to[out= 180,in= -90] (-5.00, 1.00);
\draw[orange][line width=1pt] (-5.00, 1.00) to[out= -70,in= 180] ( 0.00, 0.20) to[out=0,in=-110] ( 5.00, 1.00);
\draw[orange] ( 3.00,-0.34) node[below]{$c_1$};
\draw[orange] (-3.00,-0.34) node[below]{$c_2$};
\draw[orange] ( 0.00, 0.20) node[below]{$c_N$};
\end{tikzpicture}
  \caption{The extension of $\MM(c_1)$ and $\MM(c_2)$}
  \label{fig:extof perm curv}
\end{figure}

By Theorem \ref{thm:Chang} and Proposition \ref{prop:rota} (or by Remark \ref{rmk:rota}),
the positional relationship of $c_1^{\rota}$, $c_2^{\rota}$ and $c_3^{\rota}$ is shown in \Pic \ref{fig:extof admi curv},
and we have three isomorphisms in $\per(A)$
\begin{center}
  $\shift{n_1}{\X(\tc_1^{\rota})}\cong \MM(c_1)$, $\shift{n_2}{\X(\tc_2^{\rota})}\cong \MM(c_2)$,
  and $\shift{n_N}{\X(\tc_N^{\rota})}\cong \MM(c_N)=N$
\end{center}
for some $n_1$, $n_2$, $n_N\in\ZZ$. In this case, we define $F_{c_N}(N) = F_{c_N}(\MM(c_N)) \cong \shift{n_N}{\X(\tc_N^{\rota})}$.
\begin{figure}[H]
  \centering
\begin{tikzpicture}
\fill[black!25] ( 7.00,-1.00) -- ( 6.00,-2.00) -- ( 6.25,-2.00) -- ( 7.00,-1.25);
\fill[black!25] (-5.00, 1.00) -- (-7.00, 0.50) -- (-7.00, 0.70) -- (-5.82, 1.00);
\fill[black!25] (-1.00,-2.00) -- (-0.00,-2.00) -- (-0.00,-2.20) -- (-1.00,-2.20);
\draw[ blue] ( 0.00, 1.00) -- (-3.00,-2.00);
\draw[ blue] ( 0.00, 1.00) -- ( 3.00,-2.00);
\draw[ blue] ( 0.00, 1.00) -- (-4.00,-2.00);
\draw[ blue] ( 0.00, 1.00) -- ( 4.00,-2.00);
\draw[ blue] ( 0.00,-2.00) arc(140:35:0.65);
\draw[ blue] ( 1.00,-2.00) arc(140:35:0.65)[dotted];
\draw[ blue] ( 2.00,-2.00) arc(140:35:0.65);
\draw[ blue] (-2.00,-2.00) arc(140:35:0.65);
\draw[ blue] (-3.00,-2.00) arc(140:35:0.65)[dotted];
\draw[ blue] (-2.00, 1.00) -- (-6.00,-2.00);
\draw[ blue] ( 2.00, 1.00) -- ( 6.00,-2.00);
\draw[ blue] (-2.00, 1.00) -- (-5.00, 1.00);
\draw[ blue] (-7.00, 0.50) -- (-7.00,-1.00);
\draw[ blue] ( 2.00, 1.00) -- ( 5.00, 1.00);
\draw[ blue] ( 5.00, 1.00) -- ( 7.00, 0.50)[dotted];
\draw[ blue] ( 7.00, 0.50) -- ( 7.00,-1.00);
\draw[ blue] (-7.00,-1.00) -- (-6.00,-2.00)[dotted];
\draw[black][line width=0.72pt] (-1.00,-2.00) -- (-0.00,-2.00);
\draw[black][line width=0.72pt] (-5.00, 1.00) -- (-7.00, 0.50)[dotted];
\draw[black][line width=0.72pt] ( 7.00,-1.00) -- ( 6.00,-2.00)[dotted];
\draw[red][line width=0.72pt] (-0.50,-2.00) to[out=  90,in=   0] (-1.96,-0.80);
\draw[red][line width=0.72pt] (-0.50,-2.00) to[out= 110,in=  90] (-2.50,-1.80)[dotted];
\draw[red][line width=0.72pt] (-0.50,-2.00) to[out= 130,in=  55] (-1.50,-1.80);
\draw[red][line width=0.72pt] (-0.50,-2.00) to[out=  80,in= 180] ( 1.96,-0.80);
\draw[red][line width=0.72pt] (-0.50,-2.00) to[out=  65,in= 130] ( 2.50,-1.80);
\draw[red][line width=0.72pt] (-0.50,-2.00) to[out=  50,in= 130] ( 1.50,-1.80)[dotted];
\draw[red][line width=0.72pt] (-0.50,-2.00) to[out=  30,in= 160] ( 0.50,-1.80);
\draw[red][line width=0.72pt] (-6.00, 0.75) -- (-7.00,-0.20);
\draw[red][line width=0.72pt] (-6.00, 0.75) -- (-6.50,-1.50)[dotted];
\draw[red][line width=0.72pt] (-6.00, 0.75) -- (-4.00,-0.50);
\draw[red][line width=0.72pt] (-6.00, 0.75) -- (-3.50, 1.00);
\draw[red][line width=0.72pt] ( 6.50,-1.50) -- ( 7.00,-0.20);
\draw[red][line width=0.72pt] ( 6.50,-1.50) -- ( 6.00, 0.80)[dotted];
\draw[red][line width=0.72pt] ( 6.50,-1.50) -- ( 4.00,-0.50);
\draw[red][line width=0.72pt] ( 6.50,-1.50) -- ( 3.50, 1.00);
\fill[blue]
  ( 0.00, 1.00) circle(2pt) ( 0.00,-2.00) circle(2pt)
  (-1.00,-2.00) circle(2pt) ( 1.00,-2.00) circle(2pt)
  (-2.00,-2.00) circle(2pt) ( 2.00,-2.00) circle(2pt)
  (-3.00,-2.00) circle(2pt) ( 3.00,-2.00) circle(2pt)
  (-4.00,-2.00) circle(2pt) ( 4.00,-2.00) circle(2pt)
  (-7.00,-1.00) circle(2pt) ( 7.00,-1.00) circle(2pt)
  (-7.00, 0.50) circle(2pt) ( 7.00, 0.50) circle(2pt)
  (-5.00, 1.00) circle(2pt) ( 5.00, 1.00) circle(2pt)
  (-2.00, 1.00) circle(2pt) ( 2.00, 1.00) circle(2pt);
\fill[red] (-0.50,-2.00) circle(2pt); \fill[white] (-0.50,-2.00) circle(1.55pt);
\fill[red] (-6.00, 0.75) circle(2pt); \fill[white] (-6.00, 0.75) circle(1.55pt);
\fill[red] ( 6.50,-1.50) circle(2pt); \fill[white] ( 6.50,-1.50) circle(1.55pt);
\draw[violet][line width=1pt] ( 0.00,-2.00) to[out=  85,in= 180] ( 7.00,-1.00);
\draw[violet][line width=1pt] ( 0.00,-2.00) to[out=  90,in=   0] (-7.00, 0.50);
\draw[violet][line width=1pt] (-7.00, 0.50) to[out=   5,in= 170] ( 7.00,-1.00);
\draw[violet] ( 3.80,-0.53) node[below]{$c_1^{\rota}$};
\draw[violet] (-4.00, 0.61) node[below]{$c_2^{\rota}$};
\draw[violet] (-0.00, 0.38) node[below]{$c_N^{\rota}$};
\end{tikzpicture}
  \caption{The extension of $\X(\tc_1^{\rota})$ and $\MM(\tc_2^{\rota})$}
  \label{fig:extof admi curv}
\end{figure}

Next, we show that
\begin{center}
  $\shift{0}{M} ~\widetilde{\in}~ \langle\X(\Gamma^{\rota})\rangle_{\per(A)}$
\footnote{``$X~\widetilde{\in}~\calC$'' represent $X$ isomorphic to an object in the category $\calC$}
$\subseteq \langle\X(\Gamma^{\rota})\rangle_{\Dcat^b(A)}$
for every $M\in \langle \add(\MM(\Gamma))\rangle_{\modcat(A)}$.
\end{center}
The case for $M$ with a infinite projective dimension is shown in Remark \ref{rmk:FGamma}.
To do this, by Lemma \ref{lemm:filtration},
let $0 = M_0\subseteq M_1\subseteq\cdots\subseteq M_r = M$ be a finite filtration such that
$M_i/M_{i-1}\in\add(\MM(\Gamma))$ for every $1\=< i\=< r$. We argue by induction on $r$.
If $r=1$, then $M\in\add(\MM(\Gamma))$, in this case, we have
$\shift{0}{M}~\widetilde{\in}~ \langle\X(\Gamma^{\rota})\rangle_{\per(A)}$.
Suppose that $r>1$. The short exact sequence
$0 \to M_{r-1}
  \to M
  \to M/M_{r-1}
  \to 0$
induces a distinguished triangle
$\shift{0}{M_{r-1}}
  \to \shift{0}{M}
  \to \shift{0}{M/M_{r-1}}
  \to \shift{1}{M_{r-1}}$.
By the induction hypothesis,
$\shift{0}{M_{r-1}} ~\widetilde{\in}~ \langle\X(\Gamma^{\rota})\rangle_{\per(A)}$, while
$\shift{0}{M/M_{r-1}} ~\widetilde{\in}~ \langle\X(\Gamma^{\rota})\rangle_{\per(A)}$ by the case $r=1$.
Since $\langle\X(\Gamma^{\rota})\rangle_{\per(A)}$ is triangulated, we obtain that
$\shift{0}{M} ~\widetilde{\in}~ \langle\X(\Gamma^{\rota})\rangle_{\per(A)}$.

Finally, for any indecomposable module $M$ in $\langle \add(\MM(\Gamma)) \rangle_{\modcat(A)}$,
we have a functor $F_{\Gamma}$ induced by the following conditions.
\begin{itemize}
  \item[(1)] $F_{\Gamma}$ sends each module $M$ to the complex $(\pmb{P}_{\bullet}, d_{\bullet})\in\per(A)$
    where $(\pmb{P}_{\bullet}, d_{\bullet})$ is quasi-isomorphic to $\shift{0}{M}$;
  \item[(2)] For each $h\in \Hom_{\modcat(A)}(M_1,M_2)$,
    $F_{\Gamma}(h)$ is the homomorphism $F_{\Gamma}(h): \X(\tc_1^{\rota}) \to \X(\tc_2^{\rota})$ of complexes
    induced by the commutative diagram
\[
\xymatrix{
(\pmb{P}_{\bullet}, d_{\bullet}) =
 & 0 \ar[r]
 & P_r \ar[r]^{d_r}\ar[d]_{h_r}
 & \cdots \ar[r]
 & P_1\ar[r]^{d_1}\ar[d]_{h_1}
 & P_0\ar[r]^{d_0} \ar[d]_{h_0}
 & 0 \\
(\pmb{Q}_{\bullet}, f_{\bullet}) =
 & 0 \ar[r]
 & Q_r \ar[r]^{f_r}
 & \cdots \ar[r]
 & Q_1\ar[r]^{f_1}
 & Q_0\ar[r]^{f_0}
 & 0, \\
}
\]
    where $(\pmb{P}_{\bullet}, d_{\bullet})$ is the projective resolution of $M_1$,
    $(\pmb{Q}_{\bullet}, f_{\bullet})$ is the the projective resolution of $M_2$,
    and $(h_r)_{r\>=0}$, a chain mapping (up to chain homotopy) between two projective resolutions, is induced by $h$.
    Then we have $F_{\Gamma}(h) = (h_r)_{r\>=0}: (\pmb{P}_{\bullet}, d_{\bullet}) \to (\pmb{Q}_{\bullet}, f_{\bullet})$.
    \qedhere
\end{itemize}
\end{proof}

\begin{remark}\label{rmk:FGamma} \rm
In the proof of Lemma \ref{lemm:FGamma}, we assume that the first and last elementary $\gbullet$-polygons crossing by $c_i$ are not $\infty$-elementary polygons.
It follows that the projective dimension, $\pdim\MM(c_i)$, of $\MM(c_i)$ is finite.
We can use the proof methods for the other cases, i.e.,
when at least one of $\pdim\MM(c_1)$ and $\pdim\MM(c_2)$ is infinite.
\begin{enumerate}[label={\rm(\arabic*)}]
  \item If $\pdim\MM(c_1)=\infty$ and $\pdim\MM(c_2)<\infty$,
  then one of the first elementary $\gbullet$-polygon $\PP_{1*}$
  and the last elementary $\gbullet$-polygon $\PP_{1}^*$
  is an $\infty$-elementary polygon. Then three subcases will be discovered. \label{FGamma:1}
  \begin{enumerate}[label={\rm(\alph*)}]
    \item The elementary $\gbullet$-polygon $\PP_{1*}$ is an $\infty$-elementary polygon, whereas $\PP_{1}^*$ is not.
      Then the positional relationship of $c_1$, $c_2$ and $c_3$ is shown in \Pic \ref{fig:extof perm curv-infty 1.1},
      and the positional relationship of $c_1^{\rota}$, $c_2^{\rota}$ and $c_3^{\rota}$ is shown in \Pic \ref{fig:extof admi curv-infty 1.1}.
      \label{FGamma:1.1}
    \item The elementary $\gbullet$-polygon $\PP_1^*$ is an $\infty$-elementary polygon, whereas $\PP_{1*}$ is not.
      We ignore the figures of this case.
    \item The elementary $\gbullet$-polygons $\PP_{1*}$ and $\PP_1^*$ both are $\infty$-elementary polygons.
      We ignore the figures of this case.
  \end{enumerate}

  \item If $\pdim\MM(c_1)<\infty$ and $\pdim\MM(c_2)=\infty$,
    then one of the first elementary $\gbullet$-polygon $\PP_{2*}$
    and the last elementary $\gbullet$-polygon $\PP_{2}^*$
    is an $\infty$-elementary polygon.
    This is similar to \ref{FGamma:1}.
    We ignore the figures of this case.

  \item If $\pdim\MM(c_1)=\infty$ and $\pdim\MM(c_2)=\infty$,
    then one of the first elementary $\gbullet$-polygon $\PP_{1*}$
    and the last elementary $\gbullet$-polygon $\PP_{1}^*$
    is an $\infty$-elementary polygon;
    and one of the first elementary $\gbullet$-polygon $\PP_{2*}$
    and the last elementary $\gbullet$-polygon $\PP_{2}^*$
    is an $\infty$-elementary polygon.
    We ignore the figures of this case.
\end{enumerate}
\end{remark}

\begin{figure}[H]
  \centering
\begin{tikzpicture}
\fill[black!25] ( 7.00,-1.00) -- ( 6.00,-2.00) -- ( 6.25,-2.00) -- ( 7.00,-1.25);
\fill[black!25] (-5.00, 1.00) -- (-7.00, 0.50) -- (-7.00, 0.70) -- (-5.82, 1.00);
\fill[black!25] ( 0.00,-1.20) circle(0.23cm);
\draw[black][line width=1pt] ( 0.00,-1.20) circle(0.23cm);
\draw ( 0.55,-1.20) node{$\PP_{1,*}$};
\draw ( 6.25,-0.20) node{$\PP_{1}^*$};
\draw[ blue][line width=0.72pt] ( 0.00, 1.00) -- (-3.00,-2.00);
\draw[ blue][line width=0.72pt] ( 0.00, 1.00) -- ( 3.00,-2.00);
\draw[ blue][line width=0.72pt] ( 0.00, 1.00) -- (-4.00,-2.00);
\draw[ blue][line width=0.72pt] ( 0.00, 1.00) -- ( 4.00,-2.00);
\draw[ blue][line width=0.72pt] (-1.00,-2.00) arc(140:35:0.65);
\draw[ blue][line width=0.72pt] ( 0.00,-2.00) arc(140:35:0.65);
\draw[ blue][line width=0.72pt] ( 1.00,-2.00) arc(140:35:0.65)[dotted];
\draw[ blue][line width=0.72pt] ( 2.00,-2.00) arc(140:35:0.65);
\draw[ blue][line width=0.72pt] (-2.00,-2.00) arc(140:35:0.65);
\draw[ blue][line width=0.72pt] (-3.00,-2.00) arc(140:35:0.65)[dotted];
\draw[ blue][line width=0.72pt] (-2.00, 1.00) -- (-6.00,-2.00);
\draw[ blue][line width=0.72pt] ( 2.00, 1.00) -- ( 6.00,-2.00);
\draw[ blue][line width=0.72pt] (-2.00, 1.00) -- (-5.00, 1.00);
\draw[black][line width=0.72pt] (-5.00, 1.00) -- (-7.00, 0.50)[dotted];
\draw[ blue][line width=0.72pt] (-7.00, 0.50) -- (-7.00,-1.00);
\draw[ blue][line width=0.72pt] ( 2.00, 1.00) -- ( 5.00, 1.00);
\draw[ blue][line width=0.72pt] ( 5.00, 1.00) -- ( 7.00, 0.50)[dotted];
\draw[ blue][line width=0.72pt] ( 7.00, 0.50) -- ( 7.00,-1.00);
\draw[ blue][line width=0.72pt] (-7.00,-1.00) -- (-6.00,-2.00)[dotted];
\draw[black][line width=0.72pt] ( 7.00,-1.00) -- ( 6.00,-2.00)[dotted];
\fill[blue]
  ( 0.00, 1.00) circle(2pt) ( 0.00,-2.00) circle(2pt)
  (-1.00,-2.00) circle(2pt) ( 1.00,-2.00) circle(2pt)
  (-2.00,-2.00) circle(2pt) ( 2.00,-2.00) circle(2pt)
  (-3.00,-2.00) circle(2pt) ( 3.00,-2.00) circle(2pt)
  (-4.00,-2.00) circle(2pt) ( 4.00,-2.00) circle(2pt)
  (-7.00,-1.00) circle(2pt) ( 7.00,-1.00) circle(2pt)
  (-7.00, 0.50) circle(2pt) ( 7.00, 0.50) circle(2pt)
  (-5.00, 1.00) circle(2pt) ( 5.00, 1.00) circle(2pt)
  (-2.00, 1.00) circle(2pt) ( 2.00, 1.00) circle(2pt);
\fill[red] (-6.00, 0.75) circle(2pt); \fill[white] (-6.00, 0.75) circle(1.55pt);
\fill[red] ( 6.50,-1.50) circle(2pt); \fill[white] ( 6.50,-1.50) circle(1.55pt);
\draw[orange][line width=1pt] (-2.00,-2.00) to[out=  50,in= 180] ( 3.00,-0.34)[->];
\draw[orange][line width=1pt] ( 3.00,-0.34) to[out=   0,in= 270] ( 5.00, 1.00);
\draw[orange][line width=1pt] ( 2.00,-2.00) to[out= 130,in=   0] (-3.00,-0.34)[->];
\draw[orange][line width=1pt] (-3.00,-0.34) to[out= 180,in= -90] (-5.00, 1.00);
\draw[orange][line width=1pt] (-5.00, 1.00) to[out= -70,in= 180] ( 0.00, 0.20) to[out=0,in=-110] ( 5.00, 1.00);
\draw[orange] ( 3.00,-0.34) node[below]{$c_1$};
\draw[orange] (-3.00,-0.34) node[below]{$c_2$};
\draw[orange] ( 0.00, 0.20) node[below]{$c_N$};
\end{tikzpicture}
  \caption{The extension of $\MM(c_1)$ and $\MM(c_2)$: case \ref{FGamma:1.1}}
  \label{fig:extof perm curv-infty 1.1}
\end{figure}

\begin{figure}[H]
  \centering
\begin{tikzpicture}
\fill[black!25] ( 7.00,-1.00) -- ( 6.00,-2.00) -- ( 6.25,-2.00) -- ( 7.00,-1.25);
\fill[black!25] (-5.00, 1.00) -- (-7.00, 0.50) -- (-7.00, 0.70) -- (-5.82, 1.00);
\fill[black!25] ( 0.00,-1.00) circle(0.23cm);
\draw[black][line width=1pt] ( 0.00,-1.00) circle(0.23cm);
\draw[ blue] ( 0.00, 1.00) -- (-3.00,-2.00);
\draw[ blue] ( 0.00, 1.00) -- ( 3.00,-2.00);
\draw[ blue] ( 0.00, 1.00) -- (-4.00,-2.00);
\draw[ blue] ( 0.00, 1.00) -- ( 4.00,-2.00);
\draw[ blue] (-1.00,-2.00) arc(140:35:0.65);
\draw[ blue] ( 0.00,-2.00) arc(140:35:0.65);
\draw[ blue] ( 1.00,-2.00) arc(140:35:0.65)[dotted];
\draw[ blue] ( 2.00,-2.00) arc(140:35:0.65);
\draw[ blue] (-2.00,-2.00) arc(140:35:0.65);
\draw[ blue] (-3.00,-2.00) arc(140:35:0.65)[dotted];
\draw[ blue] (-2.00, 1.00) -- (-6.00,-2.00);
\draw[ blue] ( 2.00, 1.00) -- ( 6.00,-2.00);
\draw[ blue] (-2.00, 1.00) -- (-5.00, 1.00);
\draw[ blue] (-7.00, 0.50) -- (-7.00,-1.00);
\draw[ blue] ( 2.00, 1.00) -- ( 5.00, 1.00);
\draw[ blue] ( 5.00, 1.00) -- ( 7.00, 0.50)[dotted];
\draw[ blue] ( 7.00, 0.50) -- ( 7.00,-1.00);
\draw[ blue] (-7.00,-1.00) -- (-6.00,-2.00)[dotted];
\draw[black][line width=0.72pt] (-5.00, 1.00) -- (-7.00, 0.50)[dotted];
\draw[black][line width=0.72pt] ( 7.00,-1.00) -- ( 6.00,-2.00)[dotted];
\draw[red][line width=0.72pt] (-6.00, 0.75) -- (-7.00,-0.20);
\draw[red][line width=0.72pt] (-6.00, 0.75) -- (-6.50,-1.50)[dotted];
\draw[red][line width=0.72pt] (-6.00, 0.75) -- (-4.00,-0.50);
\draw[red][line width=0.72pt] (-6.00, 0.75) -- (-3.50, 1.00);
\draw[red][line width=0.72pt] ( 6.50,-1.50) -- ( 7.00,-0.20);
\draw[red][line width=0.72pt] ( 6.50,-1.50) -- ( 6.00, 0.80)[dotted];
\draw[red][line width=0.72pt] ( 6.50,-1.50) -- ( 4.00,-0.50);
\draw[red][line width=0.72pt] ( 6.50,-1.50) -- ( 3.50, 1.00);
\fill[blue]
  ( 0.00, 1.00) circle(2pt) ( 0.00,-2.00) circle(2pt)
  (-1.00,-2.00) circle(2pt) ( 1.00,-2.00) circle(2pt)
  (-2.00,-2.00) circle(2pt) ( 2.00,-2.00) circle(2pt)
  (-3.00,-2.00) circle(2pt) ( 3.00,-2.00) circle(2pt)
  (-4.00,-2.00) circle(2pt) ( 4.00,-2.00) circle(2pt)
  (-7.00,-1.00) circle(2pt) ( 7.00,-1.00) circle(2pt)
  (-7.00, 0.50) circle(2pt) ( 7.00, 0.50) circle(2pt)
  (-5.00, 1.00) circle(2pt) ( 5.00, 1.00) circle(2pt)
  (-2.00, 1.00) circle(2pt) ( 2.00, 1.00) circle(2pt);
\fill[red] (-6.00, 0.75) circle(2pt); \fill[white] (-6.00, 0.75) circle(1.55pt);
\fill[red] ( 6.50,-1.50) circle(2pt); \fill[white] ( 6.50,-1.50) circle(1.55pt);
\draw[violet][line width=1pt] ( 7.00,-1.00)
  to[out= 180,in=   0] ( 0.00,-1.80) arc( -90:-180: 0.80)
  to[out=  90,in= 180] ( 0.00,-0.40) arc(  90:-180: 0.60)
  to[out=  90,in= 180] ( 0.00,-0.55) arc(  90:-180: 0.45)
  to[out=  90,in= 180] ( 0.00,-0.65);
\draw[violet][line width=1pt] ( 0.00,-0.65) arc(  90:-180: 0.35) [dashed];
\draw[violet!50][line width=1pt] (-7.00, 0.50)
  to[out=   0,in= 180] (-0.00,-0.30) arc(  90:-180: 0.70)
  to[out=  90,in= 180] (-0.00,-0.50) arc(  90:-180: 0.50)
  to[out=  90,in= 180] (-0.00,-0.60);
\draw[violet!50][line width=1pt] (-0.00,-0.60) arc(  90:-180: 0.40) [dashed];
\draw[violet][line width=1pt] (-7.00, 0.50) to[out=   5,in= 170] ( 7.00,-1.00);
\draw[violet] ( 4.10,-1.23) node[below]{$c_1^{\rota}$};
\draw[violet] (-4.00, 0.31) node[below]{$c_2^{\rota}$};
\draw[violet] (-0.00, 0.38) node[below]{$c_N^{\rota}$};
\end{tikzpicture}
  \caption{The extension of $\X(\tc_1^{\rota})$ and $\MM(\tc_2^{\rota})$: case \ref{FGamma:1.1}}
  \label{fig:extof admi curv-infty 1.1}
\end{figure}

\begin{proposition} \label{prop:FGamma}
Keep the notation from Lemma \ref{lemm:FGamma}. \checks{If the global dimension $\gldim A$ of $A$ is finite,} then the functor $F_{\Gamma}$ is fully faithful.
\end{proposition}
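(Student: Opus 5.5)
The plan is to identify $F_{\Gamma}$ with the restriction of the canonical embedding $\modcat(A)\to\Dcat^b(A)$, $M\mapsto M[0]$, composed with the equivalence $\Dcat^b(A)\simeq\per(A)\simeq K^b(\proj(A))$. Full faithfulness then follows from the classical fact that $\Hom_{\Dcat^b(A)}(M[0],N[0])\cong\Hom_A(M,N)$. The geometric input only guarantees that the resulting complexes lie in $\langle\X(\Gamma^{\rota})\rangle$; this was already established in Lemma \ref{lemm:FGamma} together with Theorem \ref{thm:Chang}.

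First, I will check that $F_{\Gamma}$ agrees, up to natural isomorphism, with the functor $P:M\mapsto \pmb{P}_{\bullet}(M)$ sending a module to a chosen projective resolution. Since $\gldim A<\infty$, every resolution is bounded, so $P$ lands in $K^b(\proj(A))=\per(A)$. On objects, Lemma \ref{lemm:FGamma} sends $\MM(\gamma)$ to $\X(\tc^{\rota})[n_\gamma]$. By Theorem \ref{thm:Chang} and Remark \ref{rmk:rota}, this complex has cohomology concentrated in one degree, isomorphic to $\MM(\gamma)$. It is therefore a projective resolution of $\MM(\gamma)$, and hence homotopy equivalent to $P(\MM(\gamma))$. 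On a general object $M$ of $\langle\add(\MM(\Gamma))\rangle_{\modcat(A)}$, definition (1) in that proof sends $M$ directly to a complex quasi-isomorphic to $M[0]$. On morphisms, definition (2) sends $h$ to the comparison chain map lifting $h$. By the comparison theorem this lift is unique up to homotopy, so $F_{\Gamma}$ is a well-defined functor naturally isomorphic to the restriction of $P$.

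Second, I will show that $P$ is fully faithful. Faithfulness: if the lift $(h_r)$ of $h$ is null-homotopic, then applying $\H^0$ gives $h=\H^0((h_r))=0$. Fullness: a chain map $g:\pmb{P}_\bullet\to\pmb{Q}_\bullet$ between resolutions of $M_1$ and $M_2$ induces $h:=\H^0(g):M_1\to M_2$. Both $g$ and the comparison lift of $h$ lift the same map, so by the uniqueness part of the comparison theorem they are homotopic, giving $g=F_{\Gamma}(h)$ in $K^b(\proj(A))$. Equivalently, $\Hom_{K^b(\proj A)}(\pmb P_\bullet,\pmb Q_\bullet)\cong\Hom_{\Dcat^b(A)}(M_1[0],M_2[0])\cong\Ext^0_A(M_1,M_2)=\Hom_A(M_1,M_2)$.

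The main obstacle I expect is the compatibility of shifts and gradings. The target category is generated by the $\X(\tc^{\rota})$, but $F_{\Gamma}$ uses the shifted copies $\X(\tc^{\rota})[n_c]$. I must ensure these are normalized consistently, with cohomology in degree $0$ (the convention $\min_i\tc_i=0$ in Definition \ref{def:perm and adm curve}), so that every $F_{\Gamma}(M)$ is quasi-isomorphic to $M[0]$ rather than to some shift $M[k]$. Otherwise Hom-spaces between objects would compute $\Ext^{k}$ instead of $\Hom$. Since triangulated subcategories are closed under shifts, membership in $\langle\X(\Gamma^{\rota})\rangle_{\per(A)}$ is unaffected by this normalization, and the inductive filtration argument in Lemma \ref{lemm:filtration} and Lemma \ref{lemm:FGamma} already yields $M[0]$ in that subcategory. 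Once this normalization is fixed, the argument is purely homological; the surface combinatorics enters only through Theorem \ref{thm:Chang}.
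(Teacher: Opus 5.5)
Your proposal is correct and follows essentially the same route as the paper. Both arguments identify $F_{\Gamma}$, via the natural quasi-isomorphisms $q_M\colon F_{\Gamma}(M)\to M[0]$, with the stalk embedding $\modcat(A)\to\Dcat^b(A)\simeq\per(A)$, and deduce full faithfulness from $\Hom_{\Dcat^b(A)}(M[0],N[0])\cong\Hom_A(M,N)$. Your comparison-theorem argument and your attention to the shift normalization $n_\gamma$ make explicit what the paper encodes in the identity $q_N\circ F_{\Gamma}(f)=f[0]\circ q_M$.
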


\begin{proof}
By the construction of $F_{\Gamma}$ in Lemma \ref{lemm:FGamma},
for any indecomposable modules $M$, $N\in$ $\langle\add(\MM(\Gamma))\rangle_{\modcat(A)}$,
let $c_M$ be the permissible curve corresponding to $M$
and $c_N$ be the permissible curve corresponding to $N$,
then there is a quasi-isomorphism
$q_M:F_{\Gamma}(M)\xrightarrow{\sim} \shift{0}{M}$ such that $\shift{0}{M}\cong \X(\tc_M^{\rota})$
for some grading $\tc_M^{\rota}$ of $c_M^{\rota}$,
and there is a quasi-isomorphism
$q_N:F_{\Gamma}(N)\xrightarrow{\sim} \shift{0}{N}$ such that $\shift{0}{N}\cong \X(\tc_N^{\rota})$
for some grading $\tc_N^{\rota}$ of $c_N^{\rota}$.
For every homomorphism $f:M\to N$ in $\langle\add(\MM(\Gamma))\rangle_{\modcat(A)}$, we have
\begin{align}\label{eq:prop:FGamma 1}
 q_N\compos F_{\Gamma}(f) = \shift{0}{f}\compos q_M
\end{align}
Since $\langle\add(\MM(\Gamma))\rangle_{\modcat(A)}$ is a full subcategory of $\modcat(A)$, we have
\[\Hom_{\langle\add(\MM(\Gamma))\rangle_{\modcat(A)}}(M,N)=\Hom_A(M,N).\]
The canonical stalk-complex embedding $\modcat(A)\to\Dcat^b(A)$,
$M\longmapsto \shift{0}{M}$ is fully faithful. Hence there is a natural isomorphism
$\Hom_A(M,N) \xrightarrow{\cong} \Hom_{\Dcat^b(A)}(\shift{0}{M},\shift{0}{N})$.
If $A$ has finite global dimension, we have $\Dcat^b(A)\simeq\per(A)$.
Furthermore, $\langle\X(\Gamma^{\rota})\rangle_{\per(A)}$ is a full subcategory of $\per(A)$.
Therefore, using the isomorphisms $q_M$ and $q_N$, we obtain natural isomorphisms
\begin{align}
\Hom_{\langle\add(\MM(\Gamma))\rangle_{\modcat(A)}}(M,N)
&=
\Hom_A(M,N) \nonumber \\
&\xrightarrow{\cong}
\Hom_{\Dcat^b(A)}(\shift{0}{M},\shift{0}{N}) \nonumber \\
&\xrightarrow{\cong}
\Hom_{\per(A)}(\X(\tc_M^{\rota}), \X(\tc_N^{\rota})) \nonumber \\
&\xrightarrow{\cong}
\Hom_{\per(A)}(F_{\Gamma}(M),F_{\Gamma}(N)) \label{eq:prop:FGamma 2}\\
&=
\Hom_{\langle\X(\Gamma^{\rota})\rangle_{\per(A)}}
(F_{\Gamma}(M),F_{\Gamma}(N)), \nonumber
\end{align}
By \eqref{eq:prop:FGamma 1}, the above composite is precisely the map induced by $F_{\Gamma}$:
\[ \Hom_{\langle\add(\MM(\Gamma))\rangle_{\modcat(A)}}(M,N)
\to \Hom_{\langle\X(\Gamma^{\rota})\rangle_{\per(A)}} (F_{\Gamma}(M),F_{\Gamma}(N)). \]
It is therefore bijective. Hence $F_{\Gamma}$ is fully faithful.
\end{proof}

\subsection{Derived decompositions and full formal arc system}

The second main result of this paper is as follows. We will use it to study the derived decompositions for gentle algebras.

\begin{theorem}\label{thm:main 260725}
Let $A$ be a gentle algebra with finite global dimension and $\SURF$ be its marked ribbon surface.
If $\SURF$ has a $\gbullet$-FFAS $\Delta$ such that:
\begin{enumerate}[label={\rm(\arabic*)}]
  \item $\Delta$ has a disjoint union decomposition $\Delta=D_1\cup D_2$ satisfying \ref{OD1} and \ref{OD2};
    \label{thm:main 260725 1}
  \item $\Delta^{\antirota} \subseteq \PC(\SURF)$, i.e., any curve in $\Delta$, as an admissible curve whose grading is natural, has an inverse rotation lying in $\PC(\SURF)$,
    \label{thm:main 260725 2}
\end{enumerate}
then $\per(A)$ has a semi-orthogonal decomposition
\[
\langle
  \langle F_{\Delta^{\antirota}}(\add(\MM(D_1^{\antirota}))) \rangle_{\per(A)},
  \langle F_{\Delta^{\antirota}}(\add(\MM(D_2^{\antirota}))) \rangle_{\per(A)}
\rangle,
\]
where $F_{\Delta^{\antirota}}$ is the fully faithful functor
\begin{align*}
  F_{\Delta^{\antirota}}: \langle \add(\MM(\Delta^{\antirota}))\rangle_{\modcat(A)}
& \to \langle\X(\Delta)\rangle_{\per(A)} \\
\MM(\tvarsig^{\antirota})
& \mapsto \X((\tvarsig^{\antirota})^{\rota}) \cong \shift{n_{\tvarsig}}{\X(\tvarsig)}
\end{align*}
given in Proposition \ref{prop:FGamma},
$n_{\tvarsig}\in\ZZ$ is an integer such that $\MM(\tvarsig^{\antirota})[0] \cong \X(\tvarsig)[n_{\tvarsig}]$ holds in $\per(A)$.
\end{theorem}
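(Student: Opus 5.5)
The plan is to reduce Theorem \ref{thm:main 260725} to Proposition \ref{prop:OD} applied to the families of permissible curves $\calC:=D_1^{\antirota}$ and $\calD:=D_2^{\antirota}$. We then identify the two components of the resulting decomposition with the triangulated hulls of the images of $F_{\Delta^{\antirota}}$.

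\textbf{Step 1: the two families are well-defined and rotate back to $D_1$ and $D_2$.} By hypothesis \ref{thm:main 260725 2}, every $\tvarsig\in\Delta$ has an inverse rotation $\tvarsig^{\antirota}\in\PC(\SURF)$. By Proposition \ref{prop:rota} and Lemma \ref{lemm:antirota}, this gives $(\tvarsig^{\antirota})^{\rota}\simeq\varsigma$ as curves. Hence $\calC^{\rota}=D_1$ and $\calD^{\rota}=D_2$ up to homotopy, where the gradings may differ only by a shift.

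\textbf{Step 2: apply Proposition \ref{prop:OD}.} Since shifts do not affect triangulated hulls, $\langle\X(\calC^{\rota})\rangle_{\per(A)}=\langle\X(D_1)\rangle_{\per(A)}$, and likewise for $D_2$. Hypothesis \ref{thm:main 260725 1} says that $\calC^{\rota}\cup\calD^{\rota}=\Delta$ is a $\gbullet$-FFAS and that the pair satisfies \ref{OD2}. Proposition \ref{prop:OD} therefore yields the semi-orthogonal decomposition
$\per(A)=\langle\langle\X(D_1)\rangle_{\per(A)},\langle\X(D_2)\rangle_{\per(A)}\rangle$.

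\textbf{Step 3: identify the components.} Since $\gldim A<\infty$, Lemma \ref{lemm:FGamma} and Proposition \ref{prop:FGamma} provide the fully faithful functor $F_{\Delta^{\antirota}}$. It sends $\MM(\tvarsig^{\antirota})$ to $\X(\tvarsig)[n_{\tvarsig}]$, by Theorem \ref{thm:Chang} together with Remark \ref{rmk:rota}. Every object of $\add(\MM(D_i^{\antirota}))$ is a finite direct sum of such modules. Consequently $F_{\Delta^{\antirota}}(\add(\MM(D_i^{\antirota})))$ consists of finite direct sums of shifts of the objects $\X(\tvarsig)$ with $\tvarsig\in D_i$. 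This set contains a shift of each generator $\X(\tvarsig)$, $\tvarsig\in D_i$, and is contained in $\langle\X(D_i)\rangle_{\per(A)}$. The smallest triangulated subcategory containing it is therefore exactly $\langle\X(D_i)\rangle_{\per(A)}$. Substituting this into Step 2 gives the claimed decomposition.

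\textbf{Expected obstacle.} The delicate point is Step 1. We must check that the inverse rotation of a curve of $\Delta$, carrying its natural grading, rotates back to a curve homotopic to the original one. This is needed so that the intersection pattern in \ref{OD2} is genuinely the pattern of $\calC^{\rota}$ and $\calD^{\rota}$. The plan is to use $\MM((c^{\rota})^{\antirota})\cong\MM(c)$ together with the construction in the proof of Lemma \ref{lemm:antirota}, which builds $c=\sigma'\sigma''$ with $c^{\rota}\simeq\varsigma$. Grading discrepancies are harmless, because only triangulated hulls enter the statement.
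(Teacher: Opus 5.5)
Your proposal is correct and follows essentially the paper's own argument. You obtain $\langle\langle\X(D_1)\rangle_{\per(A)},\langle\X(D_2)\rangle_{\per(A)}\rangle$ from Proposition \ref{prop:OD}, then note that $F_{\Delta^{\antirota}}$ sends $\add(\MM(D_i^{\antirota}))$ to finite sums of shifts of the generators $\X(\tvarsig)$, $\tvarsig\in D_i$, so the triangulated hulls coincide; you also make explicit the round trip $(\varsigma^{\antirota})^{\rota}\simeq\varsigma$ from Lemma \ref{lemm:antirota} needed to feed $D_i^{\antirota}$ into Proposition \ref{prop:OD}, whereas the paper leaves this implicit and additionally detours through the good cut of Proposition \ref{prop:good cut}, which is not needed.
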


\begin{proof}
By Proposition \ref{prop:OD} (or Theorem \ref{thm:main 260724}) and the condition \ref{thm:main 260725 1},
$\SURF$ has a good cut $\Omega$, which corresponds to the semi-orthogonal decomposition
$\langle \langle \X(D_1) \rangle_{\per(A)}, \langle \X(D_2) \rangle_{\per(A)} \rangle$ of $\per(A)$.
By \ref{thm:main 260725 2}, we obtain $D_1^{\antirota}\subseteq \PC(\SURF)$ and $D_2^{\antirota}\subseteq \PC(\SURF)$,
and then we get two full extension-closed subcategories $\langle\add(\MM(D_1^{\antirota}))\rangle_{\modcat(A)}$
and $\langle\add(\MM(D_2^{\antirota}))\rangle_{\modcat(A)}$ of $\modcat(A)$.
By Proposition \ref{prop:FGamma}, we get two fully faithful functors
\begin{align*}
 & F_{D_1^{\antirota}}: \langle\add(\MM(D_1^{\antirota}))\rangle_{\modcat(A)}
   \to \langle\X(D_1)\rangle_{\per(A)}, \\
 & F_{D_2^{\antirota}}: \langle\add(\MM(D_2^{\antirota}))\rangle_{\modcat(A)}
   \to \langle\X(D_2)\rangle_{\per(A)}.
\end{align*}
Since $F_{D_1^{\antirota}}$ sends each indecomposable module corresponding to permissible curve
$\tvarsig^{\antirota}\in D_1^{\antirota}$ to the indecomposable object
(induced by the projective resolution of $\M(\tvarsig^{\antirota})$)
corresponding to $(\tvarsig^{\antirota})^{\rota}\simeq \tvarsig\in D_1$,
we have
\begin{align*}
 \langle \Ima(F_{D_1^{\antirota}}) \rangle_{\per(A)} = \langle\X(D_1)\rangle_{\per(A)}.
\end{align*}
Similarly, we have
\begin{align*}
 \langle \Ima(F_{D_2^{\antirota}}) \rangle_{\per(A)} = \langle\X(D_2)\rangle_{\per(A)}.
\end{align*}
Notice that, by Proposition \ref{prop:good cut} (see the formula \eqref{eq in prop:good cut} in its proof),
we have a semi-orthogonal decomposition
$\langle\langle\X(D_1)\rangle_{\per(A)}, \langle\X(D_2)\rangle_{\per(A)}\rangle$
of $\per(A)$, and, for each $i\in\{1,2\}$,
$\langle \Ima(F_{D_i^{\antirota}})\rangle_{\per(A)} = $
$\langle F_{D_i^{\antirota}}(\langle\add(\MM(D_i^{\antirota}))\rangle_{\modcat(A)} ) \rangle_{\per(A)} = $
$\langle F_{D_i^{\antirota}}(\add(\MM(D_i^{\antirota})) ) \rangle_{\per(A)}$,
hence
\begin{align*}
  & \langle\langle\X(D_1)\rangle_{\per(A)}, \langle\X(D_2)\rangle_{\per(A)}\rangle \\
=~& \langle\langle\Ima(F_{D_1^{\antirota}}), \Ima(F_{D_2^{\antirota}})\rangle \\
=~& \langle
      \langle F_{D_1^{\antirota}}(\add(\MM(D_1^{\antirota}))) \rangle_{\per(A)},
      \langle F_{D_2^{\antirota}}(\add(\MM(D_2^{\antirota}))) \rangle_{\per(A)}
    \rangle\\
=~& \langle
      \langle F_{\Delta^{\antirota}}(\add(\MM(D_1^{\antirota}))) \rangle_{\per(A)},
      \langle F_{\Delta^{\antirota}}(\add(\MM(D_2^{\antirota}))) \rangle_{\per(A)}
    \rangle
\end{align*}
as required.
\end{proof}

\begin{example} \label{examp:gentle260802-dd} \rm
Keep the notation from Example \ref{examp:gentle260802-od}.
The inverse rotation $\Delta^{\antirota}$ is shown in \Pic \ref{fig:gentle260802-antirota}.
\begin{figure}[htbp]
  \centering
\begin{tikzpicture}
\draw[blue][line width=1pt]
  (-1.73,-1.00) -- (-1.73, 1.00) --
  ( 0.00, 2.00) -- ( 0.00,-2.00) --
  ( 1.73, 1.00) -- ( 1.73,-1.00);
\draw[red!25] (-1.00,-1.73) -- (-2.00, 0.00);
\draw[red!25] (-1.00,-1.73) -- (-1.00, 1.73);
\draw[red!25] (-1.00,-1.73) -- ( 1.00, 1.73);
\draw[red!25] ( 1.00, 1.73) -- ( 1.00,-1.73);
\draw[red!25] ( 1.00,-1.73) -- ( 2.00, 0.00);
\draw[line width=1pt] (0,0) circle(2cm);
\foreach \x in {0,60,120,180,240,300}
\fill[blue][rotate= \x] (0,2) circle(1mm);
\foreach \x in {0,60,120,180,240,300}
\fill[white][rotate= \x] (2,0) circle(1mm);
\foreach \x in {0,60,120,180,240,300}
\draw[red][line width=0.7pt][rotate= \x] (2,0) circle(1mm);
\draw[orange][line width=0.65pt] ( 0.00, 2.00) -- (-2.00, 0.00);
\draw[orange][line width=0.65pt] (-1.00, 1.73) -- ( 0.00,-2.00);
\draw[orange][line width=0.65pt] (-1.00, 1.73) -- ( 1.73,-1.00);
\draw[orange][line width=0.65pt] (-1.00, 1.73) -- ( 2.00, 0.00);
\draw[orange][line width=0.65pt] ( 0.00, 2.00) -- ( 2.00, 0.00);
\draw[orange] ( 0.56, 0.12) node[below]{$c_3^{\antirota}$};
\draw[orange] (-1.70, 0.30) node[right]{$c_1^{\antirota}$};
\draw[orange] (-0.41,-0.00) node[ left]{$c_2^{\antirota}$};
\draw[orange] ( 0.76, 0.77) node[below]{$c_4^{\antirota}$};
\draw[orange] ( 0.50, 1.75) node[below]{$c_5^{\antirota}$};
\draw[blue] (-1.73, 1.00) node[ left]{$p_1$};
\draw[blue] ( 0.00,-2.00) node[below]{$p_2$};
\draw[blue] ( 1.73, 1.00) node[right]{$p_3$};
\end{tikzpicture}
  \caption{The inverse rotation of $\Delta$ given in Example \ref{examp:gentle260802-od}}
  \label{fig:gentle260802-antirota}
\end{figure}
Consider the set $\calC^{\antirota}:=\{c_i^{\antirota}: i\in\{2,3,5\}\}$
and the set $\calD^{\antirota}:=\{c_j^{\antirota}: j\in\{1,4\}\}$.
Then $(\calC^{\antirota}, \calD^{\antirota})$ satisfy the conditions \ref{OD1} and \ref{OD2},
it admits a semi-orthogonal decomposition of $\per(A)$, see Example \ref{examp:gentle260802-od}.
Here, $F_{\Delta^{\antirota}}$ sends each $\MM(c_i^{\antirota})$ to $\X(\tc_i)$.
\end{example}

\begin{remark}\rm
For each $i\in\{1,2\}$, let $\emb_i:\langle\add(\MM(D_i^{\antirota}))\rangle_{\modcat(A)}\to\modcat(A)$
be the canonical exact embedding. Denote by
\[
s_i:\langle\add(\MM(D_i^{\antirota}))\rangle_{\modcat(A)}
\to \Dcat^b(\langle\add(\MM(D_i^{\antirota}))\rangle_{\modcat(A)}),
\qquad s_A:\modcat(A)\to\Dcat^b(A)
\]
the canonical stalk-complex functors. Then we have $\Dcat^b(\emb_i)\compos s_i = s_A\compos\emb_i$.
Let $F_{D_i^{\antirota}}$ be the functor given in Proposition \ref{prop:FGamma},
and let $\jmath_i: \langle\X(D_i)\rangle_{\Dcat^b(A)}\to \Dcat^b(A)$
be the canonical embedding. By the construction of $F_{D_i^{\antirota}}$,
for every $M\in \langle\add(\MM(D_i^{\antirota}))\rangle_{\modcat(A)} $ there is a natural quasi-isomorphism
\[ q_M:F_{D_i^{\antirota}}(M) \xrightarrow{\sim} \shift{0}{M}. \]
Consequently, there is a natural isomorphism of functors
\[ \jmath_i\compos F_i \cong s_A\compos\emb_i = \Dcat^b(\emb_i)\compos s_i. \]
Notice that this natural isomorphism does not by itself imply
that $\Dcat^b(\emb_i)$ is fully faithful. The latter requires
the additional comparison of extension groups appearing in
Corollary \ref{coro:main 260725}.
\end{remark}

\begin{corollary} \label{coro:main 260725}
Keep the notations from Theorem \ref{thm:main 260725}.
For each $i\in\{1,2\}$, let $\calC_i:=\langle\add(\MM(D_i^{\antirota}))\rangle_{\modcat(A)}$
and let $\emb_i:\calC_i\to\modcat(A)$ be an exact canonical embedding.
If:
\begin{enumerate}[label={\rm(\arabic*)}]
  \item $\calC_1$ and $\calC_2$ are Abelian subcategories;
  \item for all $X,Y\in\calC_i$ and all $n\>= 0$, the canonical map
    $\Ext^n_{\calC_i}(X,Y) \to \Ext^n_A(X,Y)$
  is an isomorphism,
\end{enumerate}
then $\modcat(A)$ has a derived composition
\[
[\calC_1,\calC_2]=[\langle \add(\MM(D_1^{\antirota})) \rangle_{\modcat(A)},
\langle \add(\MM(D_2^{\antirota})) \rangle_{\modcat(A)}].
\]
\end{corollary}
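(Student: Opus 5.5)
Our plan is to verify the two axioms \ref{def-DD1} and \ref{def-DD2} for the pair $(\calC_1,\calC_2)$ with $\calA=\modcat(A)$. Throughout we use that $\gldim A<\infty$, so $\per(A)\simeq\Dcat^b(A)$. Theorem \ref{thm:main 260725} then yields a semi-orthogonal decomposition of $\Dcat^b(A)$ whose components are $\langle F_{\Delta^{\antirota}}(\add(\MM(D_i^{\antirota})))\rangle_{\per(A)}=\langle \X(D_i)\rangle_{\per(A)}$. The work therefore splits into two parts: full faithfulness of $\Dcat^b(\emb_i)$, and identification of $\Ima(\Dcat^b(\emb_i))$ with these components.

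For \ref{def-DD1}, fix $i$. Since $\calC_i$ is Abelian and $\emb_i$ is exact, $\Dcat^b(\emb_i)\colon\Dcat^b(\calC_i)\to\Dcat^b(A)$ is a well-defined triangle functor. We will use the standard dévissage argument. Let $\mathcal{U}$ be the class of pairs $(X^\bullet,Y^\bullet)$ of objects of $\Dcat^b(\calC_i)$ for which $\Hom(X^\bullet,Y^\bullet[n])\to\Hom(\Dcat^b(\emb_i)X^\bullet,\Dcat^b(\emb_i)Y^\bullet[n])$ is bijective for all $n\in\ZZ$. For stalk complexes $X[0],Y[0]$ with $X,Y\in\calC_i$, this map is exactly the canonical map $\Ext^n_{\calC_i}(X,Y)\to\Ext^n_A(X,Y)$. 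For $n\ge 0$ it is bijective by hypothesis (2); for $n<0$ both sides vanish. Every bounded complex over $\calC_i$ is built from its shifted cohomology stalks by finitely many triangles (the truncation triangles). Applying the five lemma to the long exact $\Hom$-sequences of these triangles, first in one variable and then in the other, we extend the bijection to all pairs of objects. Hence $\Dcat^b(\emb_i)$ is fully faithful.

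For \ref{def-DD2}, we have to show $\Ima(\Dcat^b(\emb_i))=\langle\X(D_i)\rangle_{\per(A)}$ (up to the identification $\per(A)\simeq\Dcat^b(A)$ and closure under isomorphisms). Both sides are triangulated subcategories: the essential image of a fully faithful triangle functor is triangulated. The image contains $s_A\emb_i(\calC_i)$, which by the preceding Remark is isomorphic to $\jmath_i F_{D_i^{\antirota}}(\calC_i)$. That category contains $\X(\tvarsig)[n_{\tvarsig}]$ for every $\tvarsig\in D_i$, so $\langle\X(D_i)\rangle\subseteq\Ima$. Conversely, $\Dcat^b(\calC_i)$ is generated as a triangulated category by stalk complexes of objects of $\calC_i$. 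Each $M\in\calC_i$ satisfies $M[0]\cong F_{D_i^{\antirota}}(M)$, and by Lemma \ref{lemm:FGamma} this object lies in $\langle\X(D_i)\rangle_{\per(A)}$. Hence $\Ima\subseteq\langle\X(D_i)\rangle$. Combining this equality with Theorem \ref{thm:main 260725}, the pair $\langle\Ima\Dcat^b(\emb_1),\Ima\Dcat^b(\emb_2)\rangle$ is a semi-orthogonal decomposition of $\Dcat^b(A)$, and $[\calC_1,\calC_2]$ is a derived decomposition.

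The main obstacle is the first step. We must check that the comparison map on stalk complexes really coincides with the given map on $\Ext$ groups; here $\Ext^n_{\calC_i}$ means Yoneda extensions in the Abelian category $\calC_i$, which agree with $\Hom_{\Dcat^b(\calC_i)}$. We must also make sure the dévissage uses only objects of $\calC_i$. The latter is guaranteed because $\calC_i$ is Abelian, so the cohomology of a complex over $\calC_i$ stays in $\calC_i$. By contrast, the identification of images in the second step is routine given Lemma \ref{lemm:FGamma} and the Remark.
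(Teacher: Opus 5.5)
Your proposal is correct and follows the same route as the paper. You get full faithfulness of $\Dcat^b(\emb_i)$ from the $\Ext$-comparison hypothesis. You then prove $\Ima(\Dcat^b(\emb_i))=\langle\X(D_i)\rangle$ using Lemma \ref{lemm:FGamma} for one inclusion, and for the other the fact that the essential image of a fully faithful triangle functor is triangulated and contains every $\X(\tvarsig)[n_{\tvarsig}]$, before concluding with Theorem \ref{thm:main 260725}. The one difference is that you carry out the truncation-and-five-lemma d\'evissage for \ref{def-DD1}, which the paper only asserts.
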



\begin{proof}
For each $i\in\{1,2\}$, the assumption on extension groups
implies that the exact embedding $\emb_i$ induces a fully faithful triangulated functor
\begin{align}\label{eq:coro:main 260725 1}
  \Dcat^b(\emb_i):\Dcat^b(\calC_i)\to \Dcat^b(A)
\end{align}
Next, we prove that
\begin{align}\label{eq:coro:main 260725 2}
\Ima(\Dcat^b(\emb_i))= \langle\X(D_i)\rangle_{\Dcat^b(A)}.
\end{align}
By Lemma \ref{lemm:FGamma}, for every $M\in\calC_i$,
the stalk complex $M[0]$ belongs to $\langle\X(D_i)\rangle_{\Dcat^b(A)}$.
Every bounded complex over $\calC_i$ can be obtained from its terms,
regarded as stalk complexes, by taking finitely many shifts and cones.
Therefore, we get
\begin{align}\label{eq:coro:main 260725 3}
  \Ima(\Dcat^b(\emb_i)) \subseteq \langle\X(D_i)\rangle_{\Dcat^b(A)}.
\end{align}
Conversely, for every $\tvarsig\in D_i$, the module $\MM(\tvarsig^{\antirota})$ belongs to $\calC_i$,
and, by Theorem \ref{thm:Chang}, we have that
$\Dcat^b(\emb_i) (\MM(\tvarsig^{\antirota})[0]) \cong \shift{n_{\tvarsig}}{\X(\tvarsig)}$
for some $n_{\tvarsig}\in\ZZ$. Thus, the essential image of $\Dcat^b(\emb_i)$ contains, up to shifts,
all the generators $\X(\tvarsig)$ with $\tvarsig\in D_i$.
Since the essential image of a fully faithful triangulated functor is a full triangulated subcategory,
we have
\begin{align}\label{eq:coro:main 260725 4}
  \langle\X(D_i)\rangle_{\Dcat^b(A)} \subseteq \Ima(\Dcat^b(\emb_i)).
\end{align}
Equations \eqref{eq:coro:main 260725 3} and \eqref{eq:coro:main 260725 4} prove \eqref{eq:coro:main 260725 2}.
Note that the global dimension of $A$ is finite, then we have $\Dcat^b(A)=\per(A)$.
By Theorem \ref{thm:main 260725}, we get that
$\langle \langle\X(D_1)\rangle_{\per(A)}, \langle\X(D_2)\rangle_{\per(A)} \rangle$
is a semi-orthogonal decomposition of $\per(A)$.
By \eqref{eq:coro:main 260725 2}, we obtain a semi-orthogonal decomposition
$\langle \Ima(\Dcat^b(\emb_1)), \Ima(\Dcat^b(\emb_2)) \rangle$ of $\Dcat^b(A)$.
Thus, by the definition of a derived decomposition,
$[\calC_1,\calC_2]$ is a derived decomposition of $\modcat(A)$.
\end{proof}

\paragraph{Competing Interests}
The authors declare that they have no conflicts of interest as defined by the journal, nor any other interests that could be perceived as influencing the results presented in this paper.

\paragraph{Authors' Contributions}
The order of authors is alphabetical, and all authors contributed equally to the conception, methodology, derivation, and writing of this paper.

\paragraph{Ethical Approval}
This article does not require ethical approval.

\paragraph{Acknowledgements}
We would like to express our gratitude to the Guizhou Provincial Key Laboratory of Applied Mathematics and Computing Power \& Algorithms for providing us with an academic discussion laboratory.





\def\cprime{$'$}

\end{document}